\newcommand{\af}{\mathfrak a}
\newcommand{\g}{\mathfrak g}
\newcommand{\A}{\mathbb A}
\newcommand{\C}{\mathbb C}
\newcommand{\R}{\mathbb R}
\newcommand{\bs}{\backslash}
\newcommand{\G}{\mathbf G}
\newcommand{\bK}{\mathbf K}
\newcommand{\bL}{\mathbf L}
\newcommand{\bN}{\mathbf N}
\newcommand{\bP}{\mathbf P}
\newcommand{\U}{\mathbf U}
\newcommand{\V}{\mathbf V}
\newcommand{\Ind}{\operatorname{Ind}}
\newcommand{\Infl}{\operatorname{Infl}}
\newcommand{\infl}{^\mathrm{infl}}
\newcommand{\can}{^\mathrm{can}}
\newcommand{\prin}{^\mathrm{prin}}
\newcommand{\Ad}{\operatorname{Ad}}
\renewcommand{\Re}{\operatorname{Re}}
\renewcommand{\Im}{\operatorname{Im}}
\newcommand{\Ker}{\operatorname{Ker}}
\newcommand{\Hom}{\operatorname{Hom}}
\newcommand{\GL}{\operatorname{GL}}
\newcommand{\Sp}{\operatorname{Sp}}
\newcommand{\tr}{\operatorname{tr}}
\newcommand{\id}{\mathrm{id}}
\newtheorem{theorem}{Theorem}
\newtheorem{lemma}{Lemma}
\newtheorem{cor}{Corollary}
\newtheorem{conjecture}{Conjecture}
\newtheorem{hypoth}{Hypothesis}
\begin{document}

\title{The trace formula and prehomogeneous vector spaces}

\author{Werner Hoffmann\thanks{This work was supported by the SFB~701 of the German Research Foundation.}}

\maketitle

\begin{abstract}We describe an approach to express the geometric side of the Arthur-Selberg trace formula in terms of zeta integrals attached to prehomogeneous vector spaces. This will provide explicit formulas for weighted orbital integrals and for the coefficients by which they are multiplied in the trace formula. We implement this programme for the principal unipotent conjugacy class. The method relies on certain convergence results and uses the notions of induced conjugacy classes and canonical parabolic subgroups. So far, it works for certain types of conjugacy classes, which covers all classes appearing in classical groups of absolute rank up to two.
\medskip

\noindent MSC-class:  11F72 (Primary) 11S90, 11M41 (Secondary)
\end{abstract}

\section*{Introduction}

The trace formula is an equality between two expansions of a certain distribution on an adelic group. The spectral side of the formula encodes valuable information about automorphic representations of the group. Although the geometric side is regarded to be the source of information, it is far from explicit. It is a sum of so-called weighted orbital integrals, each multiplied with a coefficient that carries global arithmetic information. So far, those coefficients have only been evaluated in some special cases. Arthur remarked on p.~112 of~\cite{A-int} that ``it would be very interesting to understand them better in other examples, although this does not seem to be necessary for presently conceived applications of the trace formula''. In the meantime, as reflected in the present proceedings, further applications have emerged which revive the interest in more detailed information on those coefficients and the weight factors of weighted orbital integrals.

The problem stems from the fact that the trace distribution is defined by an integral that does not converge without regularisation. The most successful method to accomplish this is Arthur's truncation~\cite{A-trI}. However, it does not yield useful formulas for the contributions from non-semisimple conjugacy classes to the geometric side. In the original rank-one trace formula (e.~g.,~\cite{A-rk1}), they were regularised by damping factors, which led to an expression containing zeta integrals. Shintani~\cite{Sh} observed that such integrals would also appear in the dimension formula for Siegel modular forms, which can be regarded as a special case of the trace formula, if one were able to prove convergence. The same method was applied by Flicker~\cite{Fl} to the group~$\GL_3$, but for groups of higher rank, the difficulties piled up. Arthur bypassed them by a clever invariance argument, which worked for unipotent conjugacy classes, and by reducing the general case to the unipotent one~\cite{A-mix}. The price to pay was that most coefficients and weight factors remained undetermined.

We take up the original approach and remove some of the obstacles on the way to express the regularised terms on the geometric side by zeta integrals. In many cases, these integrals are supported on prehomogeneous vector spaces which appear as subquotients of canonical parabolic subgroups of unipotent elements. Moreover, just as induced representations play an important role on the spectral side, we systematically apply the notion of induced conjugacy classes on the geometric side. So far, this approach has been successful for certain types of conjugacy classes, which suffice for a complete treatment of classical groups of absolute rank up to~$2$. The details, including the necessary estimates, can be found in a joint paper~\cite{HoWa} with Wakatsuki.

Over several years of work on this project, something like a general formula was gradually emerging, changing shape as more and more conjugacy classes with new features were covered. Incomplete as the results may be, they should perhaps be made available to a wider audience now together with an indication of the remaining difficulties.

Let us describe the setting in more detail. We consider a connected reductive linear algebraic group $G$ defined over a number field~$F$. The group $G(\A)$ of points with coordinates in the ring $\A$ of adeles of~$F$ acts by right translations on the homogeneous space $G(F)\bs G(\A)$, which carries an invariant measure coming from a Haar measure on $G(\A)$ and the counting measure on~$G(F)$. The resulting unitary representation $R_G$ of $G(\A)$ on the Hilbert space $L^2(G(F)\bs G(\A))$ can be integrated to a representation of the Banach algebra $L^1(G(\A))$, and for an element $f$ of the latter, $R_G(f)$ is an integral operator with kernel
\[
K_G(x,y)=\sum_{\gamma\in G(F)}f(x^{-1}\gamma y).
\]
If $G$ is $F$-anisotropic, then $G(F)\bs G(\A)$ is compact, so the integral
\[
J(f)=\int_{G(F)\bs G(\A)}K_G(x,x)\,d x
\]
converges for smooth compactly supported functions~$f$ and defines a distribution~$J$ on~$G(\A)$. Now we have the geometric expansion
\[
J(f)=\sum_{[\gamma]}\int_{G^\gamma(F)\bs  G(\A) }f(x^{-1}\gamma x)\,d x,
\]
where $G^\gamma$ is the centraliser of~$\gamma$, and the spectral expansion
\[
\tr R_G(f)=\sum_\pi a^G(\pi)\tr\pi(f),
\]
where $a^G(\pi)$ is the multiplicity of the irreducible representation $\pi$ of $ G(\A)$ in $L^2(G(F)\bs G(\A))$. The Selberg trace formula in this case is the identity
\[
\tr R_G(f)=J(f).
\]

If the centre of $G(\A)$ is non-compact, then $R_G(f)$ has no discrete spectrum, hence its trace is not defined. Either one has to fix a central character or one has to replace the group by its largest closed normal subgroup $G(\A)^1$ with compact centre. If $G$ has proper parabolic subgroups $P$ defined over~$F$, both sides of the formula will still diverge. One has to take into account the analogous unitary representations $R_P$ of $G(\A)^1$ on the spaces $L^2(N(\A)P(F)\bs G(\A)^1)$, where the letter $N$ will always denote the unipotent radical of the group $P$ in the current context. By choosing a Levi component $M$ of~$P$, one can view $R_P$ as the representation induced from the representation $R_M$, after the latter has been inflated to a representation of $P(\A)$ by composing it with the projection $P(\A)\to M(\A)$. The kernel function for $R_P(f)$ with $f\in C_c^\infty(G(\A)^1)$ is
\[
K_P(x,y)=\sum_{\gamma\in P(F)/N(F)}\int_{N(\A)}f(x^{-1}\gamma ny)\,dn,
\]
where we normalise the Haar measure on the group~$N(\A)$ in such a way that $N(F)\bs N(\A)$ has measure~1. This can be written as a single integral over~$P(F)N(\A)$, whose integrand is compactly supported locally uniformly in $x$ and~$y$. The trace distribution is defined as
\[
J^T(f)=\int_{G(F)\bs G(\A)^1}\sum_P K_P(x,x)\hat\tau_P^T(x)\,dx,
\]
where $P$ runs over all parabolic $F$-subgroups including $G$ itself. The functions $\hat\tau_P^T$ are, up to sign, certain characteristic functions on $G(\A)$ depending on a truncation parameter~$T$ and on the choice of a maximal compact subgroup $\bK$ of~$G(\A)$. We will recall their definition in section~\ref{trunc} below, noting for the moment that $\tau_G^T(x)=1$. Their alternating signs are responsible for cancellations that make the integrand rapidly decreasing and allowed Arthur to prove absolute convergence~\cite{A-trI}.

Actually, his argument was more subtle and led to a geometric expansion of~$J^T(f)$, later called the coarse geometric expansion. It represents an intermediate stage on the way to the fine geometric expansion~\cite{A-mix}. The latter depends on the choice of a finite set $S$ of valuations of $F$ including the Archimedean ones and has the shape
\[
J^T(f)=\sum_{[M]}\sum_{[\gamma]_{M,S}}a^M(S,\gamma)J_M^T(\gamma,f).
\]
Here $f$ is a smooth compactly supported function on~$G(F_S)^1$ suitably extended to~$G(\A)^1$, where $F_S$ is the product of completions $F_v$ of~$F$ with respect to $v\in S$. The summation runs over the conjugacy classes of Levi $F$-subgroups $M$ of~$G$ and, for each such class, over the classes of elements $\gamma$ with respect to the finest equivalence relation with the following properties. Elements with $M(F)$-conjugate semisimple components are equivalent, and elements with the same semisimple component $\sigma$ and $M_\sigma(F_S)$-conjugate unipotent components are also equivalent. The weighted orbital integral $J_M^T(\gamma,f)$ is an integral with respect to a certain non-invariant measure that is undetermined in general. It is supported on the $F_S$-valued points of the conjugacy class of $G$ induced from that of $\gamma$ in~$M$. The coefficients $a^M(S,\gamma)$ do not depend on the ambient group~$G$. They have been determined for semisimple elements~\cite{A-mix}, for $M$ of $F$-rank one~\cite{Ho0}, for $M=\GL_3$ \cite{Fl}, \cite{Ma} and, with the methods presented here, for the symplectic group of rank~two~\cite{HoWa}.

\section{Prerequisites}

In this section we collect some results in order to avoid interruptions of the arguments to follow. Unless stated otherwise, all affine varieties and linear algebraic groups that appear are assumed to be connected and defined over a given field~$F$. When we speak of orbits in a $G$-variety $V$ defined over~$F$, we mean geometric orbits defined over~$F$, i.~e., minimal $F$-subvarieties $O$ such that $O(F)$ is non-empty. This applies, in particular, to conjugacy classes. By Proposition~12.1.2 of~\cite{Sp}, every element of $V(F)$ belongs to an orbit, and an orbit remains a single orbit under base change to an extension field.

\subsection{Induction of conjugacy classes}

The following well-known result has been proved by Lusztig and Spaltenstein~\cite{LS} for unipotent conjugacy classes, and its extension to general conjugacy classes can be found in~\cite{Ho}.
\begin{theorem}\label{induction}
Let $P$ be a parabolic subgroup of a reductive group~$G$ with unipotent radical~$N$  and $C$ a conjugacy class in a Levi component~$M$ of~$P$.
Then there is a unique dense $P$-conjugacy class $C'$ in $CN$ and a unique conjugacy class $\tilde C$ in~$G$ such that $\tilde C\cap P=C'$.
\end{theorem}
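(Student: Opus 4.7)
The strategy is to set up $CN$ as an irreducible $P$-stable subvariety, extract the unique dense $P$-orbit $C'$ inside it by an induction argument, and then identify the ambient $G$-conjugacy class $\tilde C$ through a centraliser identity. For the setup, the Levi decomposition $P = M \ltimes N$ makes the multiplication map $C \times N \to P$, $(c,n) \mapsto cn$, injective, since the projection $P \to M$ recovers $c$ from $cn$; hence $CN$ is an irreducible, locally closed subvariety of $P$ of dimension $\dim C + \dim N$. Conjugation by $P$ preserves $CN$: for $p = m'n' \in P$ and $cn \in CN$, writing $n'c = c(c^{-1}n'c)$ with $c^{-1}n'c \in N$ by the normality of $N$, one finds
\[
p(cn)p^{-1} = (m'cm'^{-1}) \cdot m'\bigl((c^{-1}n'c)\,n\,n'^{-1}\bigr)m'^{-1} \in CN.
\]

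To produce $C'$ and $\tilde C$ simultaneously I would introduce the induction morphism
\[
\mu \colon G \times^P CN \to G, \qquad [g,x] \mapsto gxg^{-1},
\]
whose source is irreducible of dimension $\dim G/P + \dim CN = \dim G - \dim M^c$ for any $c \in C$. Its image $G \cdot CN$ is $G$-stable and constructible, so the closure of the image is the closure of a single $G$-conjugacy class, which is declared to be $\tilde C$. Since $\tilde C$ is open in its closure and $CN$ lies in that closure, $\tilde C \cap CN$ is a non-empty open $P$-stable subset of $CN$; a $P$-orbit of maximal dimension inside it is then open and dense in $CN$ by upper semicontinuity of the stabiliser dimension, and this is $C'$. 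Uniqueness of the dense $P$-orbit is automatic from the irreducibility of $CN$, since two open dense subsets must intersect while distinct orbits are disjoint.

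For the identification $\tilde C \cap P = C'$, the inclusion $C' \subseteq \tilde C \cap P$ is built in. For the reverse, the pivotal input is the Lusztig--Spaltenstein centraliser identity: for $c' \in C'$ the $G$-centraliser $G^{c'}$ is contained in $P$, so in particular $G^{c'} = P^{c'}$. Granted this, the orbit isomorphism $G/G^{c'} \xrightarrow{\sim} \tilde C$, $gG^{c'} \mapsto gc'g^{-1}$, restricts on $P/P^{c'}$ to an isomorphism onto $C'$, and a dimension comparison using the $P$-equivariance of $\mu$ forces $\tilde C \cap P$ to coincide with $C'$. Uniqueness of $\tilde C$ is then automatic: any $G$-conjugacy class containing an element of $C'$ must be $\tilde C$.

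The main obstacle is the centraliser identity $G^{c'} = P^{c'}$, equivalently the sharp equality $\dim \tilde C = \dim G - \dim M^c$ and the generic finiteness of $\mu$. In the unipotent setting of \cite{LS} this is established by a direct centraliser computation for a well-chosen generic element $cn \in CN$, exploiting the exponential structure of unipotent groups; the extension in \cite{Ho} to general $C$ proceeds by Jordan decomposition and reduction to the unipotent case over the centraliser of the semisimple part of $c$. This is the one step where the fine structure of $P = M \ltimes N$ is used beyond pure dimension counting, and is where the bulk of the technical effort lies.
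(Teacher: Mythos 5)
The paper does not actually prove this theorem (it is quoted from Lusztig--Spaltenstein \cite{LS} and from \cite{Ho}), so the only question is whether your argument stands on its own, and it has a genuine gap at its centre. The inference ``the image $G\cdot CN$ is $G$-stable and constructible, so the closure of the image is the closure of a single $G$-conjugacy class'' is false as a general principle: the set of regular semisimple elements of $G$ is irreducible, open, $G$-stable, and contains no dense conjugacy class. The existence of a dense class in $\overline{G\cdot CN}$ is essentially the content of the theorem and does not come from constructibility; in the unipotent case Lusztig and Spaltenstein obtain it from the \emph{finiteness} of the set of unipotent classes (an irreducible finite union of classes has a dense member), and the general case is reduced to that one via Jordan decomposition inside the centraliser of the semisimple part. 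The same defect recurs when you extract $C'$: a $P$-orbit of maximal dimension in $\tilde C\cap CN$ is open only in the union of all maximal-dimensional orbits, and that union may consist of infinitely many orbits, none dense. To get a single dense $P$-orbit you must show the maximal orbit dimension equals $\dim CN$, equivalently that $\mu$ is generically finite --- which is exactly the centraliser computation you postpone to the last paragraph. So the two steps you present as soft general position arguments are where the theorem actually lives.

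There is a second problem in the final identification. The literal equality $\tilde C\cap P=C'$ cannot be forced by $G^{c'}\subseteq P$ plus a dimension count, and in fact it fails: for $G=\GL_3$, $P$ the stabiliser of a line $L$ and $C$ the trivial class of $M$, one has $C'=N\setminus\{1\}$ of dimension $2$, while $\tilde C$ is the unipotent class of type $[2,1]$ and $\tilde C\cap P$ is $3$-dimensional, containing unipotent elements that act nontrivially on $V/L$ and hence lie outside $N$. The statement established in the cited sources (and surely intended here) is $\tilde C\cap CN=C'$; your orbit-map argument does not give that either, since knowing $P/P^{c'}\xrightarrow{\sim}C'$ says nothing about points of $\tilde C\cap CN$ outside the $P$-orbit of $c'$. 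One again needs the generic finiteness of $\mu$ and an analysis of its fibres over $\tilde C$ to conclude that $\tilde C\cap CN$ is a single $P$-class. Your framework (the variety $CN$, the morphism $G\times^P CN\to G$, the dimension count $\dim G-\dim M^c$) is the right one, but as written every hard step is asserted rather than proved.
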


We will write $\tilde C=\Ind_P^GC$ and $C'=\Infl_M^PC$. The map $\Ind_P^G$ is called induction of conjugacy classes from $M$ to $G$ via~$P$, and the map $\Infl_M^P$ will be called inflation of conjugacy classes from $M$ to~$P$. The Levi components of $P$ are naturally isomorphic to~$P/N$ and will be called Levi subgroups of~$G$. We denote by $P\infl$ the set of all elements $\gamma\in P$ for which the range of the endomorphism $\Ad\gamma-\id$ of the Lie algebra of $P$ contains the Lie algebra of~$N$.
\begin{theorem}\label{induction2}
\begin{enumerate}
\item[(i)] If $M$ is a Levi component of two parabolic subgroups $P$ and $Q$ of~$G$, then $\Ind_P^GC=\Ind_Q^GC$, whence this set can be denoted by $\Ind_M^G C$.
\item[(ii)] If $M\subset M'$ are Levi subgroups of~$G$, then $\Ind_M^G C=\Ind_{M'}^G\Ind_M^{M'} C$.
\item[(iii)] The union of all the sets $C'(F)$ with $C'=\Infl_M^PC$ for conjugacy classes $C$ in $M$ over~$F$ equals $P\infl(F)$.
\item[(iv)] Given~$\gamma\in G(F)$, the set $\mathcal P_\gamma\infl$ of parabolic subgroups $P$ such that $\gamma\in P\infl$ is a finite algebraic subset of the flag variety defined over~$F$.
\end{enumerate}
\end{theorem}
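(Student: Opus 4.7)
Parts~(i) and~(ii) are standard results in the theory of induced conjugacy classes, and I would prove them along the lines of~\cite{LS,Ho}. For~(i), by the chamber-connectivity of parabolic subgroups of~$G$ with fixed Levi~$M$, it suffices to treat the case where $P$ and~$Q$ are adjacent in the sense that they sit inside a common parabolic $R$ of semisimple rank one larger than~$M$; inside~$R$ one identifies both $\Ind_P^GC$ and $\Ind_Q^GC$ with the dense $R$-orbit of~$CN_R$ by a direct Bruhat-decomposition argument. For~(ii), I would pick parabolics $P\subseteq P'$ of~$G$ with Levi components matching $M\subseteq M'$. Then $P\cap M'$ is a parabolic of~$M'$ with Levi~$M$; writing $N,N',N''$ for the unipotent radicals of $P$, $P'$, $P\cap M'$, one has $N=N''N'$, and the dense $G$-orbit of $CN=(CN'')N'$ that defines $\Ind_M^GC$ coincides with the one built iteratively as $\Ind_{M'}^G\Ind_M^{M'}C$.

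For~(iii), let $\bar\gamma$ denote the image of $\gamma\in P$ in~$M$ and let $C$ be its $M$-conjugacy class. The $P$-orbit of~$\gamma$ is dense in~$CN$ iff $\dim P-\dim P^\gamma=\dim CN$, equivalently $\dim P^\gamma=\dim M^{\bar\gamma}$. Applying the snake lemma to
\[
0\to\mathfrak n\to\mathfrak p\to\mathfrak m\to 0
\]
together with the endomorphism $\Ad\gamma-\id$ yields a six-term exact sequence
\[
0\to\mathfrak n^\gamma\to\mathfrak p^\gamma\to\mathfrak m^{\bar\gamma}\xrightarrow{\delta}\mathfrak n/(\Ad\gamma-\id)\mathfrak n\to\mathfrak p/(\Ad\gamma-\id)\mathfrak p\to\mathfrak m/(\Ad\bar\gamma-\id)\mathfrak m\to 0.
\]
Since the restriction of $\Ad\gamma-\id$ to~$\mathfrak n$ is a linear endomorphism of a finite-dimensional space, its kernel and cokernel have equal dimension, and a dimension count along the sequence shows $\dim\mathfrak p^\gamma=\dim\mathfrak m^{\bar\gamma}$ is equivalent to surjectivity of~$\delta$, which is in turn equivalent to $\mathfrak n\subseteq(\Ad\gamma-\id)\mathfrak p$, i.e.\ $\gamma\in P\infl$. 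The $M$-class of $\bar\gamma\in M(F)$ is automatically $F$-rational, so the dense orbit $\Infl_M^PC$ is too, yielding the equality of sets in~(iii).

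For~(iv), the condition $(\Ad\gamma-\id)\mathfrak p\supseteq\mathfrak n$ is a rank-type condition on the linear map $\Ad\gamma-\id|_{\mathfrak p}$, hence algebraic in~$P$ and $F$-rational when $\gamma\in G(F)$. The main obstacle is finiteness. From~(iii) and the dimension identity $\dim\tilde C=\dim G-\dim M+\dim C$ for induced classes, one obtains $\dim P^\gamma=\dim G^\gamma$ for every $P\in\mathcal P_\gamma\infl$; together with $P^\gamma\subseteq G^\gamma$ this forces $(G^\gamma)^\circ\subseteq P$. Thus $\mathcal P_\gamma\infl$ is contained in the set of parabolic subgroups of~$G$ containing~$(G^\gamma)^\circ$. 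The finiteness of this last set---equivalently, that $(G^\gamma)^\circ$ normalises only finitely many parabolic subalgebras of~$\mathfrak g$---is the technically delicate step, requiring the Levi-plus-unipotent structure of~$(G^\gamma)^\circ$ to cut out from among all $\gamma$-invariant parabolic subalgebras only a discrete collection. This is where I expect the main difficulty to lie.
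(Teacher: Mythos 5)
The paper does not prove this theorem at all: it simply refers to \cite{LS} for (i)--(ii) and to \cite{Ho} for (iii)--(iv), so there is no internal argument to compare with. Your sketches of (i) and (ii) follow the standard Lusztig--Spaltenstein line, and your snake-lemma computation for (iii) is correct and is essentially the argument of \cite{Ho}: the connecting map $\delta$ is onto exactly when $\mathfrak n\subset(\Ad\gamma-\id)\mathfrak p$, which matches the definition of $P\infl$, and the dimension count $\dim\mathfrak p^\gamma=\dim\mathfrak m^{\bar\gamma}$ characterises density of the $P$-orbit in $CN$ (in characteristic zero, where centralisers are smooth).

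The genuine gap is in (iv), and you have located it yourself without closing it. Finiteness is the entire content of that assertion, and your reduction to the statement ``$\mathcal P_\gamma\infl$ is contained in the set of parabolics containing $(G^\gamma)^\circ$'' does not by itself deliver it: the set of parabolic subgroups containing a fixed connected subgroup $H$ is the $H$-fixed locus in the flag variety, which is in general a positive-dimensional closed subvariety (already for $H$ a one-parameter unipotent subgroup the fixed locus in $G/B$ is a Springer fibre), so some special feature of centralisers beyond the dimension identity must be used, and none is supplied. The route actually taken in \cite{Ho} goes through canonical parabolic subgroups rather than through $(G^\gamma)^\circ$: Theorem~\ref{canpar}(iv) of the present paper records the key step, namely that any parabolic containing a unipotent element $\gamma$ must contain the unipotent radical $U$ of the canonical parabolic $Q$ of $\gamma$, and the set of parabolic subgroups containing the unipotent radical of a fixed parabolic is finite; the case of a general $\gamma$ is then handled by combining this with the (finite) set of parabolics adapted to the semisimple part. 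Without an argument of this kind, your proof of (iv) is incomplete.
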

The first two assertions have been proved in~\cite{LS}, the other ones in~\cite{Ho}.

\subsection{Prehomogeneous varieties}
\label{pv}

Let $G$ be a linear algebraic group. A prehomogeneous $G$-variety is an irreducible $G$-variety $V$ possessing a dense $G$-orbit~$O$. The ``generic'' stabilisers $G^\xi$ (which may be non-connected) of elements $\xi\in O$ are then conjugate in~$G$. A nonzero rational function $p$ on $V$ is relatively $G$-invariant if there exists a character $\chi$ of $G$ such that, for all $g\in G$ and $x\in V$,
\[
p(gx)=\chi(g)p(x).
\]
A prehomogeneous $G$-variety $V$ is called special if every relative invariant (defined over any extension field of~$F$) is constant. This is the case if and only if the restriction homomorphism from the group $X(G)$ of algebraic characters of $G$ to $X(G^\xi)$ is an isomorphism.

\begin{theorem}\label{prehom}
Let $P$ be a parabolic subgroup of the reductive group~$G$ with unipotent radical~$N$ and let $N'\subset N''$ be normal unipotent subgroups of $P$.
\begin{enumerate}
\item[(i)] For any $\gamma\in P\infl$, the affine space $\gamma N''/N'$ is prehomogeneous under the action of the trivial connected component $P_{\gamma N''}$ of the stabiliser of $\gamma N''$ in~$P$ by conjugation.
\item[(ii)] If $C'$ is the $P$-conjugacy class of~$\gamma$, then the generic orbit is the projection of $C'\cap \gamma N''$, viz. $(C'\cap \gamma N'')N'/N'$.
\item[(iii)] The prehomogeneous variety $\gamma N/N'$ is special if and only if $\gamma N/N''$ and $\gamma N''/N'$ are special.
\end{enumerate}
\end{theorem}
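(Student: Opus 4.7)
Parts~(i) and~(ii) will follow from a single tangent-space computation. Identifying $T_\gamma P \cong \mathfrak{p}$ via right translation, the differential at the identity of the conjugation map $p \mapsto p\gamma p^{-1}$ is $X \mapsto (\id - \Ad\gamma)X$. The hypothesis $\gamma \in P\infl$ says exactly that the image of this endomorphism of~$\mathfrak{p}$ contains~$\mathfrak{n}$, and hence $\mathfrak{n}''$. Restricted to the Lie algebra $\{X \in \mathfrak{p} : (\id - \Ad\gamma)X \in \mathfrak{n}''\}$ of $P_{\gamma N''}$ and composed with the projection to $\mathfrak{n}''/\mathfrak{n}'$, the differential surjects onto the tangent space of $\gamma N''/N'$ at~$\gamma N'$. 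Hence the $P_{\gamma N''}$-orbit of $\gamma N'$ is open, and by irreducibility of $\gamma N''/N'$ it is the unique dense orbit; this gives~(i).

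For~(ii), normality of~$N''$ in~$P$ yields $p\gamma N'' p^{-1} = (p\gamma p^{-1}) N''$, so the full stabilizer of $\gamma N''$ in~$P$ equals $\{p \in P : p\gamma p^{-1} \in \gamma N''\}$. Its orbit through~$\gamma$ under conjugation is thus $C' \cap \gamma N''$, and the image of this set in $\gamma N''/N'$ is the orbit of~$\gamma N'$ under that full stabilizer. Since an irreducible variety admits at most one open orbit, the full stabilizer and its identity component $P_{\gamma N''}$ have the same orbit through~$\gamma N'$, which is the dense orbit, so that this dense orbit is precisely $(C' \cap \gamma N'')N'/N'$.

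For~(iii), the strategy is to translate specialness into statements about algebraic character groups. Set $H = \{p \in P : p\gamma p^{-1} \in \gamma N'\}$; since $\gamma N' \subset \gamma N''$, one has $H \subset P_{\gamma N''} \subset P$, and by~(ii) the group~$H$ is the generic stabilizer both for $P$ acting on $\gamma N/N'$ (where $P_{\gamma N} = P$) and for $P_{\gamma N''}$ acting on $\gamma N''/N'$. The restriction maps
\[
X(P) \xrightarrow{\alpha} X(P_{\gamma N''}) \xrightarrow{\beta} X(H)
\]
give the isomorphism criteria: $\gamma N/N''$ is special iff $\alpha$ is an isomorphism, $\gamma N''/N'$ iff $\beta$ is, and $\gamma N/N'$ iff $\beta\alpha$ is. The implication $\Leftarrow$ is then an immediate composition of isomorphisms. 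For $\Rightarrow$, assuming $\beta\alpha$ to be an isomorphism gives injectivity of~$\alpha$ and surjectivity of~$\beta$ automatically, leaving the surjectivity of~$\alpha$ as the main obstacle. To establish it---that every algebraic character of $P_{\gamma N''}$ extends to~$P$---I would analyse the Levi decomposition of $P_{\gamma N''}$ within $P = MN$: its unipotent radical contains $N''$, and its reductive Levi factor embeds into a Levi of~$P$, which reduces the extension problem to a statement about character lattices of reductive groups that can be handled directly. With surjectivity of~$\alpha$ in hand, the isomorphism property of~$\beta$ then follows formally from $\beta\alpha$ being an isomorphism.
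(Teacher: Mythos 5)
Parts (i) and (ii) of your argument are essentially correct: the identification of the differential of $p\mapsto p\gamma p^{-1}$ with $(\id-\Ad\gamma)$, the description of the Lie algebra of the stabiliser of $\gamma N''$, the surjectivity onto $\mathfrak n''/\mathfrak n'$ coming from the definition of $P\infl$, and the passage from the full stabiliser to its identity component via uniqueness of open orbits in an irreducible variety are all sound. (The paper itself gives no internal proof but cites Proposition~5 of~\cite{Ho}; your (i)--(ii) is the expected argument.)

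Part (iii), however, has genuine gaps. First, $P_{\gamma N}\ne P$ in general: the stabiliser of the coset $\gamma N$ is $\{p\in P:\gamma^{-1}p\gamma p^{-1}\in N\}$, i.e.\ the preimage in $P$ of the centraliser of the image of $\gamma$ in $P/N$, and this is all of $P$ only when that image is central (e.g.\ for $\gamma\in N$); the set $P\infl$ contains many elements for which it is a proper subgroup. So the group acting on $\gamma N/N'$ and $\gamma N/N''$ is $P_{\gamma N}$, and your chain of restriction maps must start at $X(P_{\gamma N})$. Second, the inclusion $H\subset P_{\gamma N''}$ conflates the full stabiliser with its identity component: $H$ stabilises $\gamma N''$ but need not lie in the connected group $P_{\gamma N''}$, and since the specialness criterion involves the possibly non-connected generic stabiliser, the relevant group for $\gamma N''/N'$ is $H\cap P_{\gamma N''}$, which may be strictly smaller than $H$; these component-group discrepancies change the character groups. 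Third, and most seriously, the entire $\Rightarrow$ direction is made to rest on the unproved claim that every algebraic character of $P_{\gamma N''}$ extends to the ambient group. That claim is precisely the crux of (iii) — it is equivalent to extending a relative invariant from the fibre $\gamma N''/N'$ to a rational relative invariant of $\gamma N/N'$ — and it is not a formality about Levi decompositions: for an arbitrary connected subgroup, character restriction is far from surjective (a maximal torus of $\operatorname{SL}_2$ already has characters that do not extend), so any proof must exploit the specific geometry, namely the $P_{\gamma N}$-equivariant fibration $\gamma N/N'\to\gamma N/N''$ and the transitivity of $P_{\gamma N}$ on its generic fibres. Your sketch defers exactly this point, so the hard half of (iii) remains unproved.
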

This follows from Proposition~5 of~\cite{Ho}. Note that the action on the affine spaces in question is not always given by affine transformations.

A prehomogeneous $G$-variety is called a prehomogeneous vector space if it is a vector space and the action of $G$ is linear. A prehomogeneous vector space is called regular if the dual space $V^*$ is prehomogeneous for the contragredient action and the map $dp/p:O\to V^*$ is a dominant morphism for some relative invariant~$p$. The notion of $F$-regularity is defined in the obvious way.

Prehomogeneous vector spaces that are regular over a number field $F$ have been intensively studied because they give rise to zeta integrals \[
Z(\varphi,s_1,\dots,s_n)=\int_{G(\A)/G(F)}
|\chi_1(g)|^{s_1}\cdots|\chi_n(g)|^{s_n}
\sum_{\xi\in O(F)}\varphi(g\xi)\,dg,
\]
where $\varphi$ is a Schwarz-Bruhat function on~$V(\A)$ and the characters $\chi_i$ correspond to relative invariants $p_i$ which extend to regular functions on~$V$ and form a basis of the group of all relative invariants defined over~$F$. Here we preclude that the connected generic stabilisers $G_\xi$ have nontrivial $F$-rational characters, as the integral is otherwise divergent. (We will encounter prehomogeneous vector spaces, of incomplete type in the terminology of~\cite{Yu}, where this happens and one has to truncate the integrand.) A typical result of the classical theory is the following.
\begin{theorem}\label{zetaInt}
Suppose in addition that $G$ is $F$-anisotropic modulo centre. Let  $V=\bigoplus_{i=1}^nV_i$ be the splitting obtained by diagonalisation of the largest $F$-split torus in the centre of $G$ and choose $p_i$ depending only on the $i$-th component.
\begin{enumerate}
\item[(i)] The zeta integral converges absolutely when $\Re s_i>r_i$ for all~$i$, where $r_i=\dim V_i/\deg p_i$, and extends to a meromorphic function on~$\C^n$. Its only singularities are at most simple poles along the hyperplanes $s_i=r_i$ and $s_i=0$.
\item[(ii)] For each splitting of the index set $\{1,\dots,n\}$ into a disjoint union $I'\cup I''$ and the corresponding splitting $V=V'\oplus V''$, we have
\[
\lim_{s'\to r'}Z(\varphi,s',s'')\prod_{i\in I'}(s_i-r_i)
=Z''(\varphi'',s''),
\]
where $Z''$ is the zeta integral over
\[
\{g\in G(\A)\,\big|\,|\chi_i(g)|=1\,\forall\,i\in I'\}/G(F)
\]
of the function
\[
\varphi''(x'')=\int_{V'}\varphi(x',x'')\,dx'.
\]
\item[(iii)] For each splitting as above, we have the functional equation
\[
Z(\varphi,s',s'')=Z(\mathcal F'\varphi,r'-s',s''),
\]
where $\mathcal F'$ denotes the partial Fourier transform with respect to~$V'$.
\end{enumerate}
\end{theorem}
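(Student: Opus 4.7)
\medskip

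\noindent\textbf{Proof plan.} The strategy is the classical Sato--Shintani approach via Poisson summation, adapted to the adelic setting. Let $A$ denote the largest $F$-split torus in the centre of $G$. Because $G$ is $F$-anisotropic modulo centre, the quotient $A(\A)G(F)\bs G(\A)$ has finite volume (and is compact modulo a standard compact fundamental domain), and the characters $|\chi_i|$ identify $A(\A)^+\subset A(\A)$ with $\R_{>0}^n$. The integral therefore factors as
\[
Z(\varphi,s_1,\dots,s_n)=\int_{\R_{>0}^n} t_1^{s_1}\cdots t_n^{s_n}\,\Theta(t,\varphi)\,\frac{d t_1\cdots d t_n}{t_1\cdots t_n},
\]
where $\Theta(t,\varphi)$ is the average of $\sum_{\xi\in O(F)}\varphi(g\xi)$ over the compact part. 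Because $p_i$ depends only on $V_i$ and is homogeneous of degree $\deg p_i$, an element $t\in A(\A)^+$ with $|\chi_i(t)|=t_i$ scales $V_i$ essentially by $t_i^{1/\deg p_i}$, which pins down the asymptotics of $\Theta$ in $t$.

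For part~(i), the convergence in $\Re s_i>r_i$ is obtained by dominating $|\varphi|$ by a non-negative Schwartz--Bruhat function and using the standard lattice-sum estimate over $V_i(F)$: as $t_i\to 0$, the number of effective lattice points in $V_i$ grows like $t_i^{-\dim V_i/\deg p_i}=t_i^{-r_i}$, while for $t_i$ large one has rapid decay of $\Theta$. Splitting $\R_{>0}^n$ into the $2^n$ chambers cut out by the conditions $t_i\gtrless 1$, the parts with $t_i\ge 1$ for some index are already holomorphic in the corresponding $s_i$. Meromorphic continuation of the remaining parts follows by applying Poisson summation on each $V_i(\A)$:
\[
\sum_{\xi_i\in V_i(F)}\varphi_i(t\cdot k\xi_i)= t_i^{-r_i}\sum_{\eta_i\in V_i(F)}\widehat\varphi_i(t^{-1}\cdot k'\eta_i),
\]
after which one separates the contribution of the singular set (reduced to $\{0\}$ by special-ness and the rigidity furnished by the anisotropy-mod-centre hypothesis) from the rapidly decreasing rest. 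The zero term on the geometric side produces $\int_0^1 t_i^{s_i-1}\,dt_i=1/s_i$, the pole at $s_i=0$; the zero term on the Fourier-dual side, via the factor $t_i^{-r_i}$, produces the pole at $s_i=r_i$. All other terms remain entire, giving the required meromorphic continuation with only simple poles along the stated hyperplanes.

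Part~(iii) is then a direct consequence of that Poisson summation applied in the variables indexed by $I'$ only, combined with the substitution $t_i\mapsto t_i^{-1}$ for $i\in I'$, which sends $s_i$ to $r_i-s_i$ and $\varphi$ to $\mathcal F'\varphi$. Part~(ii) drops out of the same chamber decomposition: to compute $\lim_{s'\to r'}\prod_{i\in I'}(s_i-r_i)\cdot Z(\varphi,s',s'')$, only the chamber with $t_i<1$ for all $i\in I'$ contributes non-trivially in the limit, and on that chamber the Poisson-summed form singles out the $\eta_i=0$ term for $i\in I'$, which is exactly the integral over $\prod_{i\in I'}V_i'(\A)=V'(\A)$ producing $\varphi''$, and leaves the factor $\prod_{i\in I'}\int_0^1 t_i^{s_i-r_i-1}\,dt_i=\prod_{i\in I'}(s_i-r_i)^{-1}$ whose residue cancels the prescribed factor. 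The remaining integration in the variables $i\in I''$ is exactly $Z''(\varphi'',s'')$.

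The main obstacle is the Poisson summation step. Since $O(F)$ is not a group, one must pass to $V(F)$ and then subtract the contribution of $V(F)\setminus O(F)$; the assumption that each $V_i$ admits a single relative invariant irreducible of positive degree, combined with specialness and anisotropy modulo centre, is precisely what keeps the singular contribution under control and prevents extra poles from appearing. Establishing uniform convergence of the resulting series after Poisson summation, justifying the interchange of summation and integration on each chamber, and ensuring that the implicit constants in the estimates are integrable in $k$ over the compact fibre are the technical points that require the most care; the rest is bookkeeping.
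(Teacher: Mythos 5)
Your proposal is the classical Sato--Shintani method (radial decomposition along the split central torus, chamber splitting, Poisson summation on each $V_i$), which is exactly what the paper relies on: it gives no proof of its own, citing Saito for convergence and stating that the remaining assertions ``proceed as in'' F.~Sato's multivariable theory, with the compactness of $G(\A)^1/G(F)$ and the componentwise central action playing the simplifying role you identify. The one point to watch is your parenthetical claim that the singular set is reduced to $\{0\}$ over $F$ --- that is the input which rules out extra poles and makes the functional equation clean, and it deserves an actual argument from the anisotropy and specialness hypotheses rather than an aside --- but you have correctly located it as the crux.
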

The convergence for large $\Re s_i$ has been proved in a rather general situation by Saito~\cite{Sai}. The present situation is much easier, since $G(\A)^1/G(F)$ is compact and the centre acts by componentwise multiplication. The proof of the remaining assertions goes hand in hand and proceeds as in~\cite{Sa}.

If we fix a finite set $S$ of places of $F$ containing the archimedean ones, and a lattice in $V(\A^S)$ with respect to the maximal compact subring of~$\A^S$, then every Schwartz-Bruhat function $\varphi$ on~$V(F_S)$ can be canonically extended to~$V(\A)$. For such functions, one obtains a decomposition
\[
Z(\varphi,s)=\sum_{[\xi]_S}\zeta(\xi,s)
\int_{G(F_S)/G^\xi(F_S)}
|\chi_1(g)|^{s_1}\cdots|\chi_n(g)|^{s_n}
\varphi(g\xi)\,dg
\]
over the finitely many $G(F_S)$-orbits $[\gamma]_S$ in $O(F_S)$, where the zeta functions $\zeta(\xi,s)$ encode valuable arithmetic information (see \cite{Ki} for the case $F=\mathbb Q$, $S=\{\infty\}$). We will not go into details here but rather describe a similar procedure for conjugacy classes in subsection~\ref{dampsection}.

\subsection{Canonical parabolic subgroups}

From now on, we assume that $G$ is reductive and $F$ has characteristic zero. Then every unipotent element of $G$ is of the form $\exp X$ for a nilpotent element of the Lie algebra $\g$ of~$G$. By the Jacobson-Morozov theorem (see \S3.3 of~\cite{CG} or \S11.2 of~\cite{Bou}), there is a homomorphism $\mathfrak{sl}_2\to\g$ such that $X$ is the image of~$\left(\begin{smallmatrix}0&1\\0&0\end{smallmatrix}\right)$. Let $H$ be the image of $\left(\begin{smallmatrix}1&0\\0&-1\end{smallmatrix}\right)$ and set $\mathfrak g_n=\{Z\in\mathfrak g\mid [H,Z]=nZ\}$, so that $X\in\g_2$. We consider the subalgebras
\[
\mathfrak q=\bigoplus_{n\ge0}\mathfrak g_n,\qquad
\mathfrak u=\bigoplus_{n>0}\mathfrak g_n,\qquad
\mathfrak u'=\bigoplus_{n>1}\mathfrak g_n,\qquad
\mathfrak u''=\bigoplus_{n>2}\mathfrak g_n
\]
and the subgroups
\[
Q=\operatorname{Norm}_G\mathfrak q,\qquad
U=\exp\mathfrak u,\qquad
U'=\exp\mathfrak u',\qquad
U''=\exp\mathfrak u''.
\]
It is well known that $\mathfrak q$ is a parabolic subalgebra with ideals $\mathfrak u$, $\mathfrak u'$ and~$\mathfrak u''$, where $[\mathfrak u,\mathfrak u]=\mathfrak u'$ and $[\mathfrak u,\mathfrak u']=\mathfrak u''$, and that $Q$ is a parabolic subgroup of $G$ with unipotent radical $U$ and normal subgroups $U$ and~$U'$. By results of Kostant and Mal'cev (see \cite{CG}, ch.~3), all of them are independent of the choice of the homomorphism $\mathfrak{sl}_2\to\g$ used in the definition. Moreover, $L=\operatorname{Cent}_GH$ is a Levi component of~$Q$. One calls $\mathfrak q$ the canonical parabolic subalgebra of~$X$. If $\exp X$ is the unipotent component in the Jordan decomposition of an element $\gamma\in G$, we call $Q$ the canonical parabolic subgroup of~$\gamma$. Moreover, we denote by $Q\can$ the set of elements of $G$ whose canonical parabolic is~$Q$.
\begin{theorem}\label{canpar}
\begin{enumerate}
\item[(i)] If $X\in\g(F)$ resp.\ $\gamma\in G(F)$, then $\mathfrak q$ resp.\ $Q$ is defined over~$F$. We can choose $H\in\g(F)$, and then $L$ is defined over~$F$.
\item[(ii)] The vector space $\mathfrak u'/\mathfrak u''$ with the adjoint action of~$L\cong Q/U$ is a regular prehomogeneous vector space. In the situation of~(i) it is  $F$-regular.
\item[(iii)] If $Q$ is the canonic parabolic and $C$ the conjugacy class of an element~$\gamma$, then $C\cap Q\can$ is the conjugacy class of $\gamma$ in~$Q$. If $\gamma$ is unipotent, then it is open and dense in~$U'$ and invariant under translations by elements of~$U''$.
\item[(iv)] If $\gamma$ is a unipotent element of a parabolic subgroup $P$ of~$G$, then $U\subset P$.
\end{enumerate}
\end{theorem}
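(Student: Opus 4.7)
The four assertions will be handled in turn, using Jacobson--Morozov theory, the Kostant--Mal'cev uniqueness recalled in the text (which makes $\mathfrak q$, $\mathfrak u$, etc.\ intrinsic to $X$), and elementary $\mathfrak{sl}_2$-representation theory. For~(i), canonicity of $\mathfrak q$ forces it to be Galois-stable whenever $X\in\g(F)$, so $\mathfrak q$ and hence $Q=\operatorname{Norm}_G\mathfrak q$ are defined over~$F$. To produce a rational $H$, I would invoke Kostant's description of the fibre of $\mathfrak{sl}_2$-triples completing $X$ as a torsor under the unipotent radical of the identity component of $G^X$; since this group is connected unipotent and we are in characteristic zero, its first Galois cohomology is trivial, so an $F$-rational $H$ exists and $L=\operatorname{Cent}_GH$ inherits its $F$-structure. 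For~(ii), prehomogeneity of $\mathfrak u'/\mathfrak u''\cong\g_2$ under $L$ is Kostant's theorem: the map $\operatorname{ad}(X)\colon\g_0\to\g_2$ is surjective by $\mathfrak{sl}_2$-theory, so $L\cdot X\subset\g_2$ is open. Regularity follows by identifying the dual module with $\g_{-2}$ via the Killing form, noting that $Y$ exhibits $\g_{-2}$ as prehomogeneous as well, and showing that $\operatorname{ad}(X)\circ\operatorname{ad}(Y)$ furnishes the required discriminant; $F$-regularity in the situation of~(i) is automatic because $X$ is an $F$-point of the open orbit.

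For~(iii), the $G$-equivariance of the canonical parabolic construction gives $gQg^{-1}$ as the canonical parabolic of $g\gamma g^{-1}$; hence $g\gamma g^{-1}\in Q\can$ forces $g\in\operatorname{Norm}_GQ=Q$, and $C\cap Q\can$ equals the $Q$-conjugacy class $C'$ of $\gamma$. For unipotent $\gamma=\exp X$, openness of $C'$ in $U'$ reduces to a tangent-space calculation: after right translation by $\gamma^{-1}$, the tangent space at $\gamma$ of $C'$ identifies with $(\id-\Ad\gamma)\mathfrak q$. Writing $\id-\Ad\gamma=-\operatorname{ad}(X)\circ\phi(\operatorname{ad}(X))$ with $\phi(t)=(e^t-1)/t$, the factor $\phi(\operatorname{ad}(X))$ is invertible on $\mathfrak q$ since $\operatorname{ad}(X)|_{\mathfrak q}$ is nilpotent, so the image equals $\operatorname{ad}(X)(\mathfrak q)$, which in turn equals $\mathfrak u'$ because $\operatorname{ad}(X)\colon\g_n\to\g_{n+2}$ is surjective for $n\geq 0$ by $\mathfrak{sl}_2$-theory. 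Density in $U'$ then follows from its irreducibility. For the $U''$-invariance, given $u''=\exp v$ with $v\in\mathfrak u''$, I would solve $\Ad(\exp Z)X=X+v'$ for some $Z\in\mathfrak u$, where $v'\in\mathfrak u''$ is determined from $v$ by the Baker--Campbell--Hausdorff formula; expanding and matching grading degrees reduces this at each level to inverting a surjection $\operatorname{ad}(X)\colon\g_{n-2}\to\g_n$, a procedure which terminates because the grading is bounded above.

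Part~(iv) is the main obstacle. The natural plan is to use triple-independence of $Q$ to select an $\mathfrak{sl}_2$-triple $(X,H,Y)$ adapted to $P$: starting from $X=\log\gamma\in\operatorname{Lie}(P)$ and a Levi decomposition $P=M_PN_P$, one would conjugate within $P$ to a form where an associated cocharacter of $H$ factors through a maximal torus of $M_P$, then run Jacobson--Morozov inside the reductive group $M_P$ to pin down $H$. The hoped-for consequence is that the positive $\operatorname{ad}(H)$-eigenspaces live inside $\operatorname{Lie}(P)$ rather than its opposite unipotent, which would give $\mathfrak u\subset\operatorname{Lie}(P)$ and hence $U\subset P$. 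The crux is the weight-space compatibility: verifying that the $\mathfrak{sl}_2$-decomposition of $\g$ interacts with the root decomposition of $\operatorname{Lie}(P)$ relative to a maximal torus of $M_P$ so as to avoid the opposite unipotent, particularly when $X$ has a non-trivial $N_P$-component. I expect this to be the technical heart of the argument, likely requiring either an induction on the semisimple rank of $G$ or an appeal to the Bala--Carter description of $Q$ through a Levi subgroup in which the unipotent component of $\gamma$ is distinguished.
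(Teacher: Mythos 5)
Your treatments of (i)--(iii) are essentially correct and are in fact more detailed than the paper, which declares (i) obvious and refers to~\cite{Ho} for (ii) and~(iii). The Galois-cohomology argument for a rational~$H$ (the completing triples form a torsor under a unipotent group, whose $H^1$ vanishes in characteristic zero), the surjectivity of $\operatorname{ad}X\colon\g_0\to\g_2$ for prehomogeneity, the tangent-space identity $(\id-\Ad\gamma)\mathfrak q=\operatorname{ad}X(\mathfrak q)=\mathfrak u'$, and the graded Baker--Campbell--Hausdorff induction for the $U''$-invariance are all sound (only the identification of the relative invariant in~(ii) is left vague, but the route via the Killing-form duality $\g_2^*\cong\g_{-2}$ is the standard folklore one). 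The genuine gap is~(iv), which you explicitly do not prove. The paper's intended argument is three lines long and uses none of the machinery you anticipate: $\gamma$ lies in a Borel subgroup $B$ of~$P$; one chooses $H$ in the Lie algebra of a maximal torus $T$ of~$B$; by \S3.5 of~\cite{CG} such an $H$ lies in the closure of the positive chamber of $\mathfrak t$ relative to~$B$; since $\mathfrak u$ is a sum of root spaces $\g_\alpha$ with $\alpha(H)>0$, it lies in the nilradical of $\mathfrak b$ and hence in~$\mathfrak p$.

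That said, your instinct that the ``weight-space compatibility'' is the crux is well founded: the dominance of $H$ relative to a Borel of $P$ containing $\gamma$ is exactly where the argument is delicate, and the assertion as literally stated fails for general $P\ni\gamma$. Take $G=\operatorname{SL}_3$, $X=E_{12}$ and $P=B$ the upper-triangular Borel. The unique $H$ in the diagonal torus completing $X$ to a triple is $\operatorname{diag}(1,-1,0)$, which is not dominant for~$B$ (it pairs to $-1$ with~$\alpha_2$); one computes $\mathfrak u=\langle E_{12},E_{13},E_{32}\rangle$, which contains the lower-triangular $E_{32}$, so $U\not\subset P$ even though $\exp X\in P$. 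The statement is only ever applied to parabolics $P\in\mathcal P_\gamma\infl$ (in the proof of Lemma~\ref{JasPI}), and the correct hypothesis --- matching Theorem~2 of~\cite{Ho} --- is $\gamma\in P\infl$ rather than merely $\gamma\in P$. So if you want to complete your part~(iv), do not aim at the statement for arbitrary $P$ containing $\gamma$ (it is false); instead exploit the inflation hypothesis, under which the class of $\gamma$ is dense in $DN$ for some class $D$ in $P/N$ and the interaction between the grading of $\g$ defined by $H$ and the decomposition $\mathfrak p=\mathfrak m\oplus\mathfrak n$ can be controlled.
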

Assertion~(i) is obvious. Proofs of assertion~(ii) (which is is folklore) and (iii) can be found in~\cite{Ho}. Theorem~2 of that paper also contains a version of the last statement for mixed elements, but that seems to be less useful for our purposes.

In order to prove~(iv), observe that $\gamma$ is contained in a Borel subgroup of~$P$, and that one can choose $H$ in the Lie algebra of a maximal torus $T$ in~$B$. Then $H$ lies in the closure of the positive chamber of $\mathfrak t$ with respect to~$B$ (see \S~3.5 of~\cite{CG}). Since $\mathfrak u$ is the sum of the root spaces for roots $\alpha$ with~$\alpha(H)>0$, it is contained in the unipotent radical of $B$ and hence in~$P$.

\subsection{Mean values}

A mean value formula has been proved by Siegel for the action of $\operatorname{SL}_n(\R)$ on $\R^n$ ($n>1$), generalised by Weil~\cite{We} to the adelic setting and by Ono~\cite{Ono} to the following general case.
\begin{theorem}
If $O$ is a special $G$-homogeneous variety over a number field~$F$ with trivial groups $\pi_1(O(\C))$, $\pi_2(O(\C))$ and $X(G)$, then
\[
\int_{G(\A)/G(F)}\sum_{\xi\in O(F)}h(g\xi)\,dg=\int_{G(\A)O(F)}h(x)\,dx
\]
for $h\in C_c^\infty(G(\A)O(F))$ and a suitable normalisation of invariant measures.
\end{theorem}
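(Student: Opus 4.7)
The plan is to unfold the adelic integral on the left along $G(F)$-orbits in $O(F)$ and then recombine the pieces as $G(\A)$-orbits in $O(\A)$. Choose representatives $\xi_i$ of $O(F)/G(F)$, a set parametrised by $\ker(H^1(F,G^\xi)\to H^1(F,G))$. A standard unfolding argument yields
\[
\int_{G(\A)/G(F)}\sum_{\xi\in O(F)}h(g\xi)\,dg
=\sum_i\int_{G^{\xi_i}(F)\bs G(\A)}h(g\xi_i)\,dg.
\]
The integrand of the $i$-th term is right-$G^{\xi_i}(\A)$-invariant, so for compatible normalisations of Haar measures it factors as $\operatorname{vol}(G^{\xi_i}(F)\bs G^{\xi_i}(\A))\cdot\int_{G^{\xi_i}(\A)\bs G(\A)}h(g\xi_i)\,dg$. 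The orbit map $g\mapsto g\xi_i$ identifies $G^{\xi_i}(\A)\bs G(\A)$ with the adelic orbit $G(\A)\xi_i\subset O(\A)$, and the quotient measure pushes forward to a canonical $G(\A)$-invariant measure $d\mu_i$ there.

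It remains to show that summing these contributions reassembles $\int_{G(\A)O(F)}h(x)\,dx$ for a single natural measure $dx$. Two things must be verified: (a) the adelic orbits $G(\A)\xi_i$ partition $G(\A)O(F)$ as $i$ varies, equivalently that $O(F)/G(F)$ maps injectively onto the $G(\A)$-orbit set of $G(\A)O(F)$; and (b) the factors $\operatorname{vol}(G^{\xi_i}(F)\bs G^{\xi_i}(\A))$, which are Tamagawa numbers of the stabilisers, combine with the $d\mu_i$ into the restriction of a single canonical measure on $O(\A)$. Speciality of $O$ gives $X(G^{\xi_i})=X(G)=0$, so each stabiliser has a well-defined Tamagawa number; the topological hypotheses $\pi_1(O(\C))=\pi_2(O(\C))=0$ are precisely the vanishings that make Ono's exact sequence for Tamagawa numbers of $G$, $G^{\xi_i}$, and $G/G^{\xi_i}\cong O$ collapse into the clean identity underlying both (a) and (b), uniformly in $i$.

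The main obstacle is this final Tamagawa-number comparison, which is not visible from the unfolding but requires genuine arithmetic input. The cleanest route is to invoke Ono's theorem from~\cite{Ono} directly, as it packages all the necessary cohomological vanishings. Alternatively, one can first treat the case of a simply connected semisimple $G$ — where Kneser's Hasse principle gives $H^1(F,G)=0$ and a single $G(F)$-orbit in $O(F)$ — and then transport the result through a central isogeny using Ono's formula for how Tamagawa numbers transform, which is essentially the same calculation. Either way, the unfolding and measure-factorisation steps are routine, and the whole content of the theorem is concentrated in the Tamagawa-number identity.
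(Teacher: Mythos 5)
The paper does not prove this statement at all: it is quoted verbatim as Ono's theorem from~\cite{Ono}, and the only added content is the remark that Ono's extra hypothesis --- that the index $[G(\A)\xi\cap O(F):G(F)]$ be independent of $\xi\in O(F)$ --- holds automatically by Proposition~2.3 of~\cite{MoWa}. Since your argument also ultimately defers to Ono for the Tamagawa-number identity, your route is essentially the paper's. One correction to your sketch, though: your condition (a), injectivity of $O(F)/G(F)$ into the set of $G(\A)$-orbits, is neither needed nor true in general; what the unfolding actually requires is that every $G(\A)$-orbit meeting $O(F)$ contain the \emph{same number} of $G(F)$-orbits, so that the stabiliser volumes and the orbital measures $d\mu_i$ reassemble into one invariant measure on $G(\A)O(F)$ after a single overall normalisation. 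That uniform-fibre condition is precisely Ono's additional assumption, and the Morishita--Watanabe reference is the ingredient your proposal omits.
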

Actually, Ono imposed the additional assumption that $[G(\A)\xi\cap O(F):G(F)]$ be independent of~$\xi\in O(F)$, but this is automatically satisfied by Proposition~2.3 of~\cite{MoWa}. Moreover, he used the term ``special'' only under the assumption that the group $X(G)$ is trivial. With our wider definition, the theorem is still valid if we replace $G$ by its derived subgroup~$G'$, because the map $G'/G'^\xi\to G/G^\xi$ is an isomorphism.

If $O$ is the generic orbit in a special prehomogeneous affine space~$V$, then the first two homotopy groups are automatically trivial. In fact, for any Lipschitz map $\phi:S^i\to O(\C)$, the map $\psi:W(\C)\times S^i\times\R$ given by $\psi(w,s,t)=tw+(1-t)\phi(s)$ has range of Hausdorff dimension at most $2\dim W+i+1$. For $i\le 2$, this is less than $\dim_\R V(\C)$, so we can choose $x\in V(\C)$ not in those ranges and get a null-homotopy $\phi_t(s)=tx+(1-t)\phi(s)$ in~$O(\C)$.

We need a slightly different version of the above theorem.
\begin{theorem}\label{mean}
If $V$ is a torsor under a unipotent group~$N$ and the group $G$ with  trivial $X(G)$ acts on the pair $(N,V)$ by automorphisms, so that $V$ is a special prehomogeneous $G$-space with generic orbit $O$ and the orbit map $G\to O$ has local sections, then
\[
\int_{G(\A)/G(F)}\sum_{\xi\in O(F)}h(g\xi)\,dg
=\int_{G(\A)/G(F)}\int_{V(\A)}h(gx)\,dx\,dg
\]
for $h\in C_c^\infty(V(\A))$, provided we normalise the measure on $V(\A)$ so that $V(\A)/N(F)$ has measure~$1$.
\end{theorem}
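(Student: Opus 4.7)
The plan is to deduce Theorem~\ref{mean} from the preceding mean value theorem of Ono applied to the generic orbit $O\subset V$, then convert the resulting integral over $G(\A)O(F)$ into an integral over all of $V(\A)$ by using the $G(\A)$-invariance of the adelic measure on $V(\A)$.

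First I would trivialise the torsor: since $N$ is unipotent and the ground field has characteristic zero, $H^1(F,N)=0$, so $V(F)\neq\emptyset$. Fixing $v_0\in V(F)$, the map $n\mapsto nv_0$ is an $F$-isomorphism $N\xrightarrow{\sim}V$, so $V$ is an affine space; under this identification the normalisation making $V(\A)/N(F)$ have measure~$1$ corresponds to the Tamagawa normalisation on $N(\A)$. The Hausdorff-dimension argument recalled just before the theorem now yields $\pi_1(O(\C))=\pi_2(O(\C))=0$, and specialness of $V$ combined with $X(G)=0$ gives $X(G^\xi)=0$. Thus Ono's theorem applies and produces
\[
\int_{G(\A)/G(F)}\sum_{\xi\in O(F)}h(g\xi)\,dg=\int_{G(\A)O(F)}h(x)\,dx
\]
for $h\in C_c^\infty(G(\A)O(F))$. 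Since $X(G)=0$ forces $G$ to be semisimple, $G(\A)/G(F)$ has finite volume by Borel--Harish-Chandra, and the ``suitable normalisation'' of invariant measures can be chosen so that this volume is~$1$, matching the normalisation on $V(\A)$.

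Next I would exploit $G(\A)$-invariance. Because $G$ acts on $(N,V)$ by $F$-automorphisms and the Haar measure on the unipotent group $N(\A)$ is automorphism-invariant (characters of $N$ are trivial, so there is no modulus), the measure on $V(\A)$ is $G(\A)$-invariant. Hence the inner integral on the right-hand side of our theorem is independent of~$g$, and
\[
\int_{G(\A)/G(F)}\int_{V(\A)}h(gx)\,dx\,dg=\int_{V(\A)}h(x)\,dx.
\]
The desired equality then reduces to $\int_{V(\A)}h=\int_{G(\A)O(F)}h$, i.e.\ to showing that $V(\A)\setminus G(\A)O(F)$ is a null set.

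This complement splits into $V(\A)\setminus O(\A)$, which is null because $V\setminus O$ is a proper closed $F$-subvariety of the affine space~$V$, and $O(\A)\setminus G(\A)O(F)$, which is a union of open $G(\A)$-orbits (openness guaranteed by the local-section hypothesis) devoid of $F$-rational points. The main obstacle of the proof is ruling out a positive-measure contribution from the latter: this comes down to a Galois-cohomology argument showing that the kernel of $H^1(F,G^\xi)\to H^1(\A,G^\xi)$ parametrising the exceptional adelic orbits is trivial, or at least contributes no measure, in the cases at hand. Granting this, the three steps combine to yield the asserted identity.
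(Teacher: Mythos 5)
Your overall strategy — apply Ono's theorem to the generic orbit, then enlarge the domain of integration from $G(\A)O(F)$ to $V(\A)$ by a null-set argument and insert the extra $g$-integration using $G(\A)$-invariance of the measure — is the same as the paper's. But the null-set step has a genuine gap. You assert that $V(\A)\setminus O(\A)$ is null ``because $V\setminus O$ is a proper closed $F$-subvariety of the affine space $V$''. For adelic measures this is false in codimension one: for $O=\mathbb G_m\subset V=\A^1$ the set of adeles with every component nonzero has full measure, but the idele group $O(\A)$ (the restricted product, which is what $G(\A)O(F)$ lands in) is a \emph{null} subset of $\A$, since by Borel--Cantelli almost every adele has infinitely many components of absolute value $<1$. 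What is actually needed, and what the paper uses, is that the complement of the generic orbit in a \emph{special} prehomogeneous affine space has codimension greater than one (Lemma~7 of~\cite{Ho}); only then does $\sum_v \operatorname{meas}(V(\mathcal O_v)\setminus O(\mathcal O_v))$ converge and $V(\A)\setminus O(\A)$ become null. Your argument never invokes specialness at this point, and without it the statement you are trying to prove is simply wrong.

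The other loose end is the one you flag yourself: you leave the triviality of the exceptional adelic orbits as an unproved Galois-cohomology statement (``granting this\dots''). In fact no cohomology is needed: the hypothesis that the orbit map $G\to O$ has local sections — which, by the paper's standing convention, are defined over $F$ — immediately gives surjectivity of $G(F)\to O(F)$ and of $G(\A)\to O(\A)$, so $G(\A)O(F)=G(\A)\xi=O(\A)$ and there are no exceptional orbits at all; this also makes Ono's extra condition on $[G(\A)\xi\cap O(F):G(F)\xi]$ trivially satisfied, a point you should check rather than pass over. Finally, a small slip: $X(G)=\{1\}$ does not force $G$ to be semisimple (unipotent groups have trivial character lattice, and here $G$ is a stabiliser subgroup that need not be reductive); finite covolume of $G(F)\backslash G(\A)$ follows from the Borel--Harish-Chandra criterion for arbitrary groups without nontrivial $F$-rational characters, which is all you need.
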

\begin{proof}
Since the orbit map $g\mapsto gv$ for $v\in O(F)$ has local sections, which are defined over~$F$ according to our standing assumption, it maps $G(F)$ onto $O(F)$ and $G(\A)$ onto~$O(\A)$. In particular, Ono's additional condition is trivially satisfied. The complement of $O$ is a subvariety $W$ of codimension greater than one by Lemma~7 of~\cite{Ho}, hence a null set for the $N(\A)$-invariant measure on $V(\A)$. That measure is also $G(\A)$-invariant, hence its restriction to $O(\A)$ coincides with the measure in Ono's theorem, in which we may replace the domain of integration on the right-hand side by~$V(\A)$. We may also replace the integrand $h(x)$ by $h(g_1x)$, where $g_1\in G(\A)$ is arbitrary, and then integrate the right-hand side over~$g_1$, as the mesure of $G(F)\bs G(\A)$ is finite due to $X(G)=\{1\}$. This proves the claim up to the normalisation of measures and the extension to~$C_c^\infty(V(\A))$.

There is an alternative, though less elegant, proof, which provides these facts. One reduces the assertion to the case of abelian $N$ using a central series of a general unipotent group $N$ and Proposition~5 of~\cite{Ho}. In the abelian case one proceeds as in~\cite{We}.
\end{proof}

In the situation of Theorem~\ref{prehom}, $\gamma N/N'$ is an $N/N'$-torsor, on which $P_{\gamma N}$ acts by automorphisms. In order to apply Theorem~\ref{mean}, we need the following hypothesis about a parabolic subgroup $P$ of a reductive group $G$ with unipotent radical $N$ and a conjugacy class $C$ in~$P/N$:
\begin{hypoth}\label{NC}
There is a normal unipotent subgroup $N^C$ of $P$ such that, for $\gamma\in C'=\Infl^PC$,
\begin{enumerate}
\item[(i)] the prehomogeneous affine space $\gamma N/N^C$ is special under  $P_{\gamma N}$ and the generic orbit map has local sections,
\item[(ii)] all elements of $\gamma N^C\cap C'$ have the same canonical parabolic.
\end{enumerate}
\end{hypoth}
We call this a hypothesis rather than a conjecture because if it is not generally true, we may at least treat those conjugacy classes to which it applies. In fact, it has been checked for all classical groups up to rank~3.

As the notation suggests, there should be a canonical choice for~$N^C$. By Lemma~8 of~\cite{Ho}, there is a largest normal unipotent subgroup of $P$ with property~(ii), and under Hypothesis~\ref{NC} it will then also have property~(i) in view of Theorem~\ref{prehom}. In general, however, it seems not to be the correct choice for our purposes. We certainly assume, as we may, that $(\gamma N\gamma^{-1})^{\gamma C\gamma^{-1}}=\gamma N^C\gamma^{-1}$ for all $\gamma\in G(F)$.

\subsection{$(G,Q)$-families}
\label{GQfam}

In section~\ref{prin}, we will need an analogue of the notion of $(G,M)$-families (see section~17 of~\cite{A-int}) in which the Levi subgroup $M$ is replaced by a parabolic subgroup~$Q$. First we recall the pertinent notation.

For every connected linear algebraic group~$P$ defined over~$F$, we denote by $\af_P$ the real vector space of all homomorphisms from the group $X(P)_F$ of $F$-rational characters of $P$ to the group~$\R$. If $P=MN$ is a Levi decomposition and $A$ the largest split torus in the centre of~$M$, then the natural homomorphisms $\af_A\to\af_M\to\af_P$ are isomorphisms, and the set $\Delta_P$ of fundamental roots of $A$ in $\mathfrak n$ can be regarded as a subset of the dual space~$\af_P^*$ independent of the choice of~$A$. Moreover, if $Q\subset P$ are parabolic subgroups of a reductive group~$G$, we obtain natural maps $\af_P\rightleftarrows\af_Q$, which induce a splitting $\af_Q=\af_Q^P\oplus\af_P$. The coroots $\check\alpha$ are originally only defined for roots $\alpha$ of a maximal split torus, hence for the elements of $\Delta_Q$, when $Q$ is a minimal parabolic, but if $\beta=\alpha|_{\af_P}$ is nonzero, we may define $\check\beta$ as the projection of $\check\alpha$ to~$\af_P$. These coroots form a basis of~$\af_P^G$, and we denote the dual basis of $(\af_P/\af_G)^*$ by~$\hat\Delta_P$, whose elements $\varpi$ are called fundamental weights. The basis dual to~$\Delta_P$, whose elements are called fundamental coroots, is in bijection with $\Delta_P$ and hence with~$\hat\Delta_P$. Following~\cite{A-int}, the fundamental coroot corresponding to $\varpi\in\hat\Delta_P$ will be denoted by~$\check\varpi$.

The charactersitic functions of the chamber $\af_P^+=\{H\in\af_P\mid\alpha(H)>0\;\forall\,\alpha\in\Delta_P\}$ and the dual cone ${}^+\af_P=\{H\in\af_P\mid\varpi(H)>0\;\forall\,\varpi\in\hat\Delta_P\}$ are denoted by $\tau_P$ and $\hat\tau_P$, respectively. Their Fourier transforms
\[
\hat\theta_P(-\lambda)^{-1}
=\int_{(\af_P^G)^+}e^{\langle\lambda,H\rangle}\,dH,\qquad
\theta_P(-\lambda)^{-1}
=\int_{{}^+(\af_P^G)}e^{\langle\lambda,H\rangle}\,dH
\]
(mind the swap of the accent) are defined for complex-valued linear functions $\lambda$ on~$\af_P$ with positive real part on the support. An easy computation yields
\[
\hat\theta_P(\lambda)=\hat\eta_P\prod_{\varpi\in\hat\Delta_P}\langle\lambda,\check\varpi\rangle,\qquad
\theta_P(\lambda)=\eta_P\prod_{\alpha\in\Delta_P}
\langle\lambda,\check\alpha\rangle,
\]
where the constants $\hat\eta_P$ and $\eta_P$ depend on the Haar measure on~$\af_P^G$.

The parabolic subgroups $R$ of a Levi component $M$ of~$P$ are in bijection with the parabolic subgroups $Q$ of~$P$ via $R\mapsto RN$, $Q\mapsto M\cap Q$. Prompted by the equality $\af_R^M=\af_Q^P$, one indexes the objects associated to the pair $(M,R)$ in place of~$(G,Q)$ by the pair $(P,Q)$ of parabolics of~$G$, like $\Delta_Q^P$ etc. The upper index $G$ may sometimes be omitted, like for $\tau_P^G$ etc.

For parabolics $P\subset P'$ containing~$Q$, we denote the restriction of a linear function $\lambda$ on $\af_Q$ to the subspace $\af_P^{P'}$ by~$\lambda_P^{P'}$, where the upper index $G$ and the lower index $Q$ may be omitted. The relative versions of the above Fourier transforms are extended to all $\lambda$ by setting
\[
\hat\theta_P^{P'}(\lambda)=\hat\theta_P^{P'}(\lambda_P),\qquad
\theta_P^{P'}(\lambda)=\theta_P^{P'}(\lambda_P),
\]
and similar remarks apply to the functions $\tau_P^{P'}$ and $\hat\tau_P^{P'}$. We assume that the measures on all the spaces $\af_P^{P'}$ are normalised in a compatible way.

With notational matters out of the way, we now define a $(G,Q)$-family to be a family of holomorphic functions $c_P(\lambda)$ indexed by the parabolic subgroups $P$ containing~$Q$ and defined for $\Re\lambda$ in a neighbourhood of zero in~$(\af_Q)_\C^*$ such that, for any two parabolics $P\subset P'$ containing~$Q$,
\[
\lambda_P^{P'}=0\quad\Rightarrow\quad c_P(\lambda)=c_{P'}(\lambda).
\]
This condition does not get weaker if we require it only for $P$,  $P'$ with $\dim\af_P^{P'}=1$. We say that the $(G,Q)$-family is frugal (resp. cofrugal) if $c_P(\lambda)=c_Q(\lambda_P)$ (resp. $c_P(\lambda)=c_G(\lambda^P)$) for all~$P$, where $\lambda_P$ (resp.~$\lambda^P$) is extended to $\af_Q$ so that it vanishes on~$\af_Q^P$ (resp.\ on~$\af_P$). Every holomorphic function~$c_Q$ on~$\af_Q$ (resp. $c_G$ on~$\af_Q^G$) determines a frugal (resp.\ cofrugal) $(G,Q)$-family.
\begin{lemma}\label{GQ}
For each $(G,Q)$-family of functions~$c_P$, the meromorphic function
\[
c_Q'(\lambda)=\sum_{P\supset Q}\epsilon_Q^P c_P(\lambda)\hat\theta_Q^P(\lambda)^{-1}\theta_P(\lambda)^{-1},
\]
where $\epsilon_Q^P=(-1)^{\dim\af_Q^P}$, is holomorphic for $\Re\lambda$ in a neighbourhood of zero.
\end{lemma}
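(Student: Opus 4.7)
The plan is to adapt the proof of Arthur's standard $(G,M)$-family lemma (Lemma~17.2 of~\cite{A-int}) to this setting, where the index set consists of parabolics $P\supset Q$ and the denominators $\hat\theta_Q^P(\lambda)^{-1}\,\theta_P(\lambda)^{-1}$ mix coroots $\check\alpha$ (for $\alpha\in\Delta_P$) with fundamental coweights $\check\varpi$ (for $\varpi\in\hat\Delta_Q^P$). For each $P$, these vectors together form a basis of $\af_Q^G$, so each summand is meromorphic with at most simple poles along a finite family of hyperplanes through the origin, and the task is to show that the signed sum over $P$ is in fact holomorphic at $\lambda=0$.

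The central calculation expresses each summand as a Fourier integral. For $\Re\lambda$ in a suitable cone, one has
\[
\hat\theta_Q^P(-\lambda)^{-1}\,\theta_P(-\lambda)^{-1}
=\int_{\af_Q^G}\tau_Q^P(H_1)\,\hat\tau_P(H_2)\,e^{\langle\lambda,H\rangle}\,dH,
\]
where $H=H_1+H_2$ is the decomposition along $\af_Q^G=\af_Q^P\oplus\af_P^G$. Substituting into $c_Q'$, expanding each $c_P(\lambda)$ in a Taylor series near $\lambda=0$, and using the compatibility condition $c_P(\lambda)=c_{P'}(\lambda)$ whenever $\lambda_P^{P'}=0$ to collapse the differences $c_P-c_{P'}$ along $\af_P^{P'}$, one reduces the alternating sum via a Langlands-type combinatorial identity
\[
\sum_{R:Q\subset R\subset P}\epsilon_Q^R\,\tau_Q^R(H)\,\hat\tau_R^P(H)=\delta_{QP}
\]
to the Fourier transform of a compactly supported distribution on $\af_Q^G$. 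Holomorphy near $\lambda=0$ then follows by a Paley--Wiener argument.

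A more elementary route is induction on $\dim\af_Q^G$. The base case $Q=G$ is trivial, since the sum collapses to $c_G$. In the inductive step, one isolates pairs $P\subsetneq P'$ with $\dim\af_P^{P'}=1$, uses the compatibility $c_P|_{\lambda_P^{P'}=0}=c_{P'}|_{\lambda_P^{P'}=0}$ together with the matching of the normalisation constants $\hat\eta_P,\eta_P$ inherent to the chosen Haar measure on $\af_P^{P'}$, and thereby kills the apparent poles along $\af_P^{P'}=0$ one direction at a time, reducing to a $(G,Q)$-family indexed by a coarser stratum.

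The main obstacle is the hybrid nature of the denominators: a given singular hyperplane through the origin is generically hit by several summands at once, through a coroot $\check\alpha$ for one parabolic and a coweight $\check\varpi$ for another, so the cancellation is not term-by-term as in Arthur's original setting where only coroots occur. One must combine both flavours of Langlands's combinatorial identity and keep careful track of the normalisation constants relating coroots to fundamental coweights. Once this bookkeeping is in place, the compatibility condition of the $(G,Q)$-family delivers the required cancellations.
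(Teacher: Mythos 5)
Your second route is the right strategy and is essentially the one the paper follows: fix a simple root direction, pair up parabolics $P\subsetneq P'$ with $\dim\af_P^{P'}=1$, and show the singularities along the corresponding hyperplane cancel pairwise. You have also correctly identified the main difficulty, namely that the hyperplane $\lambda|_{\af_P^{P'}}=0$ is cut out by a coroot in the denominator $\theta_P$ of one term and by a fundamental coweight in the denominator $\hat\theta_Q^{P'}$ of the partner term. But the proposal stops exactly where the proof has to begin: you never carry out the cancellation. The content of the lemma is the following bookkeeping, which must be made explicit. Writing $\check\alpha_P$ for the projection of $\check\alpha$ ($\alpha\in\Delta_Q\setminus\Delta_Q^P$) onto $\af_P^G$ and $\check\varpi_\alpha^P$ for the projection of $\check\varpi_\alpha$ ($\alpha\in\Delta_Q^P$) onto $\af_Q^P$, one has $\hat\theta_Q^P(\lambda)\theta_P(\lambda)$ proportional to $\prod_{\alpha\in\Delta_Q^P}\lambda(\check\varpi_\alpha^P)\prod_{\alpha\in\Delta_Q\setminus\Delta_Q^P}\lambda(\check\alpha_P)$. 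For a pair $P\mapsto P'$ with $\Delta_Q^{P'}=\Delta_Q^P\cup\{\beta\}$, the decisive facts are that $\check\beta_P$ and $\check\varpi_\beta^{P'}$ lie in the line $\af_P^{P'}$, and that the differences $\check\alpha_P-\check\alpha_{P'}$ and $\check\varpi_\alpha^P-\check\varpi_\alpha^{P'}$ also lie in $\af_P^{P'}$ for the remaining $\alpha$. Only with these projection identities, combined with $c_P=c_{P'}$ on $\lambda|_{\af_P^{P'}}=0$, does the difference of the two cleared numerators become divisible by the linear form cutting out that hyperplane, so that the paired combination is regular there. None of this is asserted, let alone proved, in your write-up; the phrase ``reducing to a $(G,Q)$-family indexed by a coarser stratum'' also misdescribes the induction, which does not pass to a smaller family but removes one simple-root direction of singularities at a time.

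Your first route has a genuine flaw as stated. The Fourier representation of $\hat\theta_Q^P(-\lambda)^{-1}\theta_P(-\lambda)^{-1}$ is fine, but for a general $(G,Q)$-family the $c_P$ are arbitrary holomorphic functions, not Fourier--Laplace transforms of compactly supported measures, so the sum is not the transform of a compactly supported distribution and Paley--Wiener does not apply. The Taylor-expansion remedy only reduces to the case of polynomial families, which is the same problem, and the identity $\sum_{R}\epsilon_Q^R\tau_Q^R\hat\tau_R^P=\delta_{QP}$ does not by itself produce the compact support; that requires the compatibility condition through exactly the coroot/coweight matching described above. The paper reserves the Fourier argument for the special exponential families of Lemma~\ref{G'}, where compact support of $\Gamma_Q'$ is available.
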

The special case for frugal families is Lemma~6.1 of~\cite{A-inv} (with the roles of $P$ and $Q$ interchanged). As in that source, we could also prove a version for smooth functions defined for purely imaginary $\lambda$ only, although it does not seem to have applications. One may compute the value $c_Q'(0)$ by setting $\lambda=z\lambda_0$ for any fixed $\lambda_0$ not on any singular hyperplane and applying l'Hospital's rule to the resulting function of $z\in\C$, reduced to a common denominator.
\begin{proof}
For each fundamental root $\alpha\in\Delta_Q$, we denote the corresponding fundamental weight by~$\varpi_\alpha$. Let $P$ be a parabolic subgroup containing~$Q$. For each $\alpha\in\Delta_Q\setminus\Delta_Q^P$, the projection of $\check\alpha$ along $\af_Q^P$ onto $\af_P^G$ is a fundamental coroot $\check\alpha_P$, and for each $\alpha\in\Delta_Q^P$, the projection of $\check\varpi_\alpha$ along $\af_P^G$ onto $\af_Q^P$ is a fundamental coweight $\check\varpi_\alpha^P$. All the fundamental coroots of $\af_P^G$ in~$P$ and fundamental coweights of $\af_Q^P$ in $Q/N$ arise in this way.

We fix a fundamental root $\beta\in\Delta_Q$ and denote by $Q'$ the parabolic with $\Delta_Q^{Q'}=\{\beta\}$. Then there is a unique bijection $P\mapsto P'$ from $\{P\supset Q\mid P\not\supset Q'\}$ onto $\{P\supset Q'\}$ such that $\Delta_Q^{P'}=\Delta_Q^P\cup\{\beta\}$. The elements $\check\beta_P$ and $\check\varpi_\beta^{P'}$ as well as the differences $\check\alpha_P-\check\alpha_{P'}$ for every $\alpha\in\Delta_Q\setminus\Delta_Q^{P'}$ and $\check\varpi_\alpha^P-\check\varpi_\alpha^{P'}$ for every $\alpha\in\Delta_Q^P$ lie in the one-dimensional subspace~$\af_P^{P'}$. Together with the defining property of the $(G,Q)$-family this implies that the difference of
\[
c_P(\lambda)
\prod_{\alpha\in\Delta_Q^P}\lambda(\check\varpi_\alpha^{P'})
\prod_{\alpha\in\Delta_Q\setminus\Delta_Q^{P'}}\lambda(\check\alpha_{P'})
\]
and
\[
c_{P'}(\lambda)
\prod_{\alpha\in\Delta_Q^P}\lambda(\check\varpi_\alpha^P)
\prod_{\alpha\in\Delta_Q\setminus\Delta_Q^{P'}}\lambda(\check\alpha_P)
\]
vanishes on the hyperplane defined by $\lambda|_{\af_P^{P'}}=0$ and is therefore a multiple of the proportional linear forms $\lambda(\check\beta_P)$ and~$\lambda(\check\varpi_\beta^{P'})$. Dividing by
\[
\hat\theta_Q^P(\lambda)\theta_P(\lambda)\cdot
\hat\theta_Q^{P'}(\lambda)\theta_{P'}(\lambda),
\]
and remembering its compatible normalisation, we see that
\[
c_P(\lambda)\hat\theta_Q^P(\lambda)\theta_P(\lambda)
-c_{P'}(\lambda)\hat\theta_Q^{P'}(\lambda)\theta_{P'}(\lambda)
\]
is singular at most for those $\lambda$ which vanish on some one-dimensional subspace $\af_R^{R'}$ with $\Delta_Q^{R'}\setminus\Delta_Q^R=\{\alpha\}$ for some $\alpha\ne\beta$. Multiplying by $\epsilon_Q^P$ and summing over~$P$, we see that the same is true of~$c_Q'(\lambda)$. Since $\beta$ was arbitrary, we are done.
\end{proof}
\begin{lemma}\label{G'}
Let $X\in\af_Q^G$.
\begin{enumerate}
\item[(i)] If $c_P=e^{\langle\lambda,X_P\rangle}$, then $c_Q'(\lambda)$ is the Fourier transform of the function
\[
\Gamma_Q'(H,X)=\sum_{P\supset Q}\epsilon_P\tau_Q^P(H)\hat\tau_P(H-X).
\]
\item[(ii)] If $c_P=e^{\langle\lambda,X^P\rangle}$, then $c_Q'(\lambda)$ is the Fourier transform of the function
\[
\Gamma_Q''(H,X)=\sum_{P\supset Q}\epsilon_P\tau_Q^P(H-X)\hat\tau_P(H).
\]
\item[(iii)] For $H$ outside a finite union of hyperplanes, we have
\[
\Gamma_Q''(H,X)=\epsilon_Q^G\Gamma_Q'(X-H,X).
\]
\end{enumerate}
\end{lemma}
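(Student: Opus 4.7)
The plan is to verify (i) and (ii) by a direct Fourier integral computation that splits across $\af_Q^G=\af_Q^P\oplus\af_P^G$, and then to deduce (iii) by comparing the two resulting Fourier transforms.

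For (i), fix $P\supset Q$ and decompose $H=H'+H''$ with $H'\in\af_Q^P$, $H''\in\af_P^G$. By construction $\tau_Q^P(H)$ depends only on $H'$ and $\hat\tau_P(H-X)$ only on $H''-X_P$, so the Fourier integral of $\tau_Q^P(H)\hat\tau_P(H-X)$ factorises as an integral of $e^{\langle\lambda,H'\rangle}$ over the chamber $(\af_Q^P)^+$ times a translated integral of $e^{\langle\lambda,H''\rangle}$ over ${}^+\af_P^G$. The displayed formulas of Section~\ref{GQfam} evaluate these to $\hat\theta_Q^P(-\lambda)^{-1}$ and $e^{\langle\lambda,X_P\rangle}\theta_P(-\lambda)^{-1}$ respectively. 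Applying the sign identities $\hat\theta_Q^P(-\lambda)=\epsilon_Q^P\hat\theta_Q^P(\lambda)$ and $\theta_P(-\lambda)=\epsilon_P\theta_P(\lambda)$, and noting that $\epsilon_P\cdot\epsilon_Q^P\epsilon_P=\epsilon_Q^P$, I sum over $P\supset Q$ with weight $\epsilon_P$ to recover the defining sum for $c_Q'(\lambda)$ from Lemma~\ref{GQ} with $c_P(\lambda)=e^{\langle\lambda,X_P\rangle}$. Part (ii) is the mirror computation: the translation by $X^P$ is now absorbed by the integral over $(\af_Q^P)^+$, which produces $e^{\langle\lambda,X^P\rangle}$ in place of $e^{\langle\lambda,X_P\rangle}$.

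For (iii), the natural route is to compare Fourier transforms. Substituting $H\mapsto X-H$ in the Fourier integral of $\epsilon_Q^G\Gamma_Q'(X-H,X)$ rewrites it as $\epsilon_Q^G e^{\langle\lambda,X\rangle}$ times the Fourier transform of (i) evaluated at $-\lambda$. Applying the same sign identities once more and using the multiplicativity $\epsilon_Q^G=\epsilon_Q^P\epsilon_P$ (which is just $\dim\af_Q^G=\dim\af_Q^P+\dim\af_P^G$), the expression coincides term by term with the sum defining $c_Q'(\lambda)$ for the family $c_P(\lambda)=e^{\langle\lambda,X^P\rangle}$, which by (ii) is the Fourier transform of $\Gamma_Q''(H,X)$. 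Uniqueness of the Fourier transform then forces the two functions to agree off the hyperplanes on which they are discontinuous.

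The main obstacle lies in justifying this appeal to Fourier uniqueness. The individual summands defining $\Gamma_Q'(H,X)$ and $\Gamma_Q''(H,X)$ are characteristic functions of unbounded polyhedral cones, so none of the Fourier integrals above converges on all of $(\af_Q)_\C^*$; the equalities in (i) and (ii) hold only on the open tube where $\Re\lambda$ is sufficiently negative on the cones involved, and otherwise must be read through meromorphic continuation. To invoke Fourier inversion in (iii) one has to establish first that the alternating sums $\Gamma_Q'(\cdot,X)$ and $\Gamma_Q''(\cdot,X)$ are in fact compactly supported on $\af_Q^G$ — the Arthur-type cancellation already visible in the one-dimensional check, where both functions collapse to $\pm$ the indicator of the segment between $0$ and $X$. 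This can be extracted inductively from the incidence relations among the $\Delta_Q^P$; once compact support is in hand, both sides of (iii) are piecewise constant functions with the same entire Fourier transform, and they must coincide on every generic chamber of the hyperplane arrangement.
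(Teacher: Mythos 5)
Your proposal is correct and follows essentially the same route as the paper: parts (i) and (ii) by factoring the Fourier integral across $\af_Q^G=\af_Q^P\oplus\af_P^G$ (the paper simply cites Lemma~2.2 of Arthur's invariant trace formula paper for (i) and declares (ii) analogous), and part (iii) by the substitution $H\mapsto X-H$, the homogeneity of $\hat\theta_Q^P(\lambda)\theta_P(\lambda)$ of degree $\dim\af_Q^G$, the decomposition $X=X_P+X^P$, and uniqueness of Fourier transforms of the compactly supported functions $\Gamma_Q'$, $\Gamma_Q''$. The compact-support point you flag as the remaining obstacle is exactly Arthur's Lemma~2.1, which the cited reference supplies, so your argument is complete.
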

\begin{proof}
Assertion (i) is Lemma~2.2 of~\cite{A-inv}, and the proof of assertion~(ii) is analogous. The substitution of $X-H$ for~$H$ has on the Fourier transform the effect of substituting $-\lambda$ for $\lambda$ and multiplying by~$e^{\langle\lambda,X\rangle}$. Since $\hat\theta_Q^P(\lambda)\theta_P(\lambda)$ is homogeneous of degree~$\dim\af_Q^G$ and $X=X_P+X^P$, the two sides of the asserted equality are characteristic functions of polyhedra with equal Fourier transforms.
\end{proof}

Given a $(G,Q)$-family and a parabolic $P\supset Q$, we obtain a $(G,P)$-family by restricting the functions $c_{P'}$ with $P'\supset P$ to the subspace~$\af_P$, and we obtain an $(M,M\cap Q)$-family, where $M$ is a Levi component of~$P$, by setting
\[
c^P_{M\cap P'}(\lambda)=c_{P'}(\lambda).
\]
Checking the condition for such families is straightforward.
\begin{lemma}\label{recQM}
\begin{enumerate}
\item[(i)] For frugal $(G,Q)$-families, the definition of $c_Q'$ is equivalent to the identity
\[
c_Q(\lambda)\theta_Q(\lambda)^{-1}=\sum_{P\supset Q}
c_P'(\lambda_P)\theta_Q^P(\lambda)^{-1}.
\]
\item[(ii)] For cofrugal $(G,Q)$-families, the definition of $c_Q'$ is equivalent to the identity
\[
c_G(\lambda)\hat\theta_Q(\lambda)^{-1}=\sum_{P\supset Q}\epsilon_Q^P
(c^P)'_{M\cap Q}(\lambda)\hat\theta_P(\lambda)^{-1}.
\]
\end{enumerate}
\end{lemma}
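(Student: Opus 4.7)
Plan. To prove identity (i), I would substitute the Möbius formula of Lemma~\ref{GQ} for each $c_P'(\lambda_P)$ on the right-hand side, exchange the order of summation so that the outer index ranges over $R\supset Q$ and the inner index over $P\in[Q,R]$, and exploit the frugality identities $c_R(\lambda_P)=c_R(\lambda)$, $\theta_R(\lambda_P)=\theta_R(\lambda)$, $\hat\theta_P^R(\lambda_P)=\hat\theta_P^R(\lambda)$, all of which hold because the subspace chain $\af_R\subset\af_P\subset\af_Q$ follows from $R\supset P\supset Q$. The resulting double sum then collapses to
\[
\sum_{R\supset Q}c_R(\lambda)\,\theta_R(\lambda)^{-1}\sum_{P:\,Q\subset P\subset R}\epsilon_P^R\,\hat\theta_P^R(\lambda)^{-1}\theta_Q^P(\lambda)^{-1},
\]
which matches $c_Q(\lambda)\theta_Q(\lambda)^{-1}$ provided the inner combinatorial factor equals $1$ when $R=Q$ and vanishes for $R\supsetneq Q$. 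The former case is immediate (a single term equal to $1$); the latter is the main combinatorial content.

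For part (ii), the same strategy applies with cofrugality $c_R(\lambda)=c_G(\lambda^R)$ and the Möbius formula inside $M_P$ in place of the ordinary one. The distinguished term $P=G$ contributes $(c^G)'_{M_G\cap Q}=c_Q'$, since $M_G=G$ and $M_G\cap Q=Q$. After expanding each $(c^P)'_{M_P\cap Q}$ (which in the paper's indexing convention involves $\hat\theta_Q^R$ and $\theta_R^P$ rather than their $M_P$-intrinsic names), swapping the order of summation and using $\epsilon_Q^P\epsilon_Q^R=\epsilon_R^P$, the right-hand side of (ii) reduces to
\[
\sum_{R\supset Q}c_G(\lambda^R)\,\hat\theta_Q^R(\lambda)^{-1}\sum_{P\supset R}\epsilon_R^P\,\hat\theta_P(\lambda)^{-1}\theta_R^P(\lambda)^{-1},
\]
which matches $c_G(\lambda)\hat\theta_Q(\lambda)^{-1}$ provided the inner factor equals $1$ when $R=G$ and vanishes otherwise.

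Both reductions thus hinge on combinatorial identities of the form $\sum\epsilon\cdot\hat\theta^{-1}\theta^{-1}=0$ on nontrivial intervals of the parabolic lattice. I plan to prove them by the telescoping trick used in the proof of Lemma~\ref{GQ}: fix a fundamental root $\beta$ at an appropriate extremity of the interval, pair parabolics via the adjunction-by-$\beta$ map $P\mapsto P'$, and use the equality $\hat\theta=\theta$ on each one-dimensional subspace $\af_P^{P'}$, a consequence of the compatible normalization of the Haar measures, to obtain cancellation. The signed bookkeeping of $\epsilon_Q^P=(-1)^{\dim\af_Q^P}$ and the careful tracking of the normalization constants $\eta_P$, $\hat\eta_P$ will be the main technical hurdles; a direct check in $\GL_3$ with $Q=B$ and $R=G$, the first case with $\dim\af_Q^R=2$, confirms both the identities and the correctness of the normalizations.
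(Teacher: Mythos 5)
Your reduction is exactly the paper's proof: substitute the defining formula for $c_P'$ (resp.\ $(c^P)'_{M\cap Q}$) into the right-hand side, interchange the two summations, use (co)frugality, and observe that the inner alternating sums $\sum_{P}\epsilon_R^P\,\theta_R^P(\lambda)^{-1}\hat\theta_P(\lambda)^{-1}$ and their hatted counterparts equal $1$ on the trivial interval and $0$ otherwise; the paper simply cites equations (8.10) and (8.11) of Arthur's \emph{Introduction} for these identities rather than reproving them. Two small caveats on your plan for the latter step: the telescoping pairing from the proof of Lemma~\ref{GQ} shows only that the paired differences are \emph{non-singular} along the hyperplane $\lambda|_{\af_P^{P'}}=0$, not that they cancel, so you get that the inner sum is entire and must then invoke its homogeneity of degree $-\dim\af_R^{R'}<0$ to conclude it vanishes. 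Also, you only treat the implication from the definition to the identity; the converse is needed for ``equivalent'', but it is immediate because each identity determines the primed functions recursively (isolate the term $P=Q$, resp.\ $P=G$), as the paper remarks.
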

Note that both identities in (ii) can be read as recursive definitions by isolating the term with $P=Q$ or $P=G$, resp. See equation~(17.9) in~\cite{A-int} and equation~(6.2) in~\cite{A-inv} for the frugal case.
\begin{proof}
For each of the four required implications, one starts with the right-hand side of the equation to be proved and plugs in the hypothesis. Then one interchanges summations and uses the fact that the expressions
\[
\sum_{P\supset Q}\epsilon_P
\hat\theta_Q^P(\lambda)^{-1}\theta_P(\lambda)^{-1},\qquad
\sum_{P\supset Q}\epsilon_P
\theta_Q^P(\lambda)^{-1}\hat\theta_P(\lambda)^{-1}
\]
are $1$ for $Q=G$ and\/ $0$ otherwise, which follows from equations (8.10) and (8.11) of~\cite{A-int}.
\end{proof}
The elementwise product of two $(G,Q)$-families is again a $(G,Q)$-family.
\begin{lemma}\label{prodGQ}
Given two $(G,Q)$-families of functions~$c_P$ and~$d_P$, of which the former family is cofrugal or the latter family is frugal, we have the splitting formula
\[
(cd)_Q'(\lambda)=\sum_{P\supset Q}
(c^P)'_{M\cap Q}(\lambda)d_P'(\lambda_P).
\]
\end{lemma}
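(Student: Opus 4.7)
The plan is to prove the identity by expanding both factors on the right-hand side via their defining formula (Lemma~\ref{GQ}), reorganising as a triple sum over chains of parabolics $Q\subset P_1\subset P\subset P_2$, and applying the combinatorial collapsing identity that already appears in the proof of Lemma~\ref{recQM}. Both hypotheses of the lemma lead to essentially the same argument, and I describe the frugal case for~$d$ in detail.

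Identifying parabolics of the Levi component $M$ of $P$ containing $M\cap Q$ with parabolics $P_1$ of $G$ with $Q\subset P_1\subset P$, and noting the coincidences $\hat\theta_{M\cap Q}^{M\cap P_1}=\hat\theta_Q^{P_1}$ and $\theta_{M\cap P_1}^M=\theta_{P_1}^P$, I expand
\[
(c^P)'_{M\cap Q}(\lambda)=\sum_{Q\subset P_1\subset P}\epsilon_Q^{P_1}c_{P_1}(\lambda)\hat\theta_Q^{P_1}(\lambda)^{-1}\theta_{P_1}^P(\lambda)^{-1}
\]
and similarly
\[
d_P'(\lambda_P)=\sum_{P_2\supset P}\epsilon_P^{P_2}d_{P_2}(\lambda_P)\hat\theta_P^{P_2}(\lambda_P)^{-1}\theta_{P_2}(\lambda_P)^{-1}.
\]
Since $\hat\theta_P^{P_2}$ and $\theta_{P_2}$ pair $\lambda$ only with elements of $\af_P^{P_2}\subset\af_P$ and $\af_{P_2}\subset\af_P$, their evaluations at $\lambda_P$ equal their evaluations at~$\lambda$. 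Frugality of $d$ gives $d_{P_2}(\lambda_P)=d_Q((\lambda_P)_{P_2})=d_Q(\lambda_{P_2})=d_{P_2}(\lambda)$, removing the remaining $P$-dependence from~$d_{P_2}$.

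With all $P$-dependence now confined to the $\epsilon$- and $\theta$-factors, I interchange the order of summation so that $P$ is the innermost variable. Using $\epsilon_Q^{P_1}\epsilon_P^{P_2}=\epsilon_Q^{P_2}\epsilon_{P_1}^P$ (additivity of $\dim\af_\cdot^\cdot$ along the chain) and the compatible normalisations of measures on the spaces~$\af_P^{P'}$, the inner sum for fixed $P_1\subset P_2$ reduces to
\[
\sum_{P_1\subset P\subset P_2}\epsilon_{P_1}^P\,\theta_{P_1}^P(\lambda)^{-1}\hat\theta_P^{P_2}(\lambda)^{-1},
\]
which by equations~(8.10)--(8.11) of~\cite{A-int} (as recalled in the proof of Lemma~\ref{recQM}) is~$1$ when $P_1=P_2$ and vanishes otherwise. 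The surviving sum over $P=P_1=P_2$ is by inspection the defining expression for~$(cd)_Q'(\lambda)$. The cofrugal case for $c$ follows by a dual reorganisation of the triple sum, in which cofrugality $c_P(\lambda)=c_G(\lambda^P)$ plays the role that frugality of $d$ played above. The main obstacle is purely notational: keeping track of which relative subspace $\af_P^{P'}$ each restricted linear form lives in, and verifying that the multiplicative decompositions such as $\hat\theta_Q(\lambda)=\hat\theta_Q^P(\lambda)\hat\theta_P(\lambda)$ along nested chains hold on the nose under the assumed compatible normalisations; once this bookkeeping is settled, the combinatorial heart of the argument reduces to the collapsing identity already available from Lemma~\ref{recQM}.
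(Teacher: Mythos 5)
Your treatment of the frugal case is correct and is in substance the paper's own argument run in the opposite direction: the paper starts from the definition of $(cd)_Q'$ and inserts the recursion of Lemma~\ref{recQM}(i), while you expand the right-hand side into the triple sum over chains $Q\subset P_1\subset P\subset P_2$ and collapse the middle sum using the same identities (8.10)--(8.11) that underlie that recursion; your sign identity $\epsilon_Q^{P_1}\epsilon_P^{P_2}=\epsilon_Q^{P_2}\epsilon_{P_1}^P$ and the observation $d_{P_2}(\lambda_P)=d_{P_2}(\lambda)$ for frugal $d$ are exactly the points that make this work.

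The cofrugal case, however, is not covered by the one-sentence appeal to a ``dual reorganisation''. In your triple sum the only obstacle to collapsing the sum over the middle parabolic $P$ (with $P_1\subset P_2$ fixed) is the factor $d_{P_2}(\lambda_P)$, whose argument $\lambda_P$ varies with $P$; this obstacle concerns the family $d$ alone, and the hypothesis in this case is on $c$, so cofrugality $c_{P_1}(\lambda)=c_G(\lambda^{P_1})$ does nothing to remove it. Nor does any other grouping of the triple sum collapse directly, since $c_{P_1}(\lambda)$ still depends on $P_1$ even for a cofrugal family. The two hypotheses are genuinely asymmetric in the statement itself: the factor $(c^P)'_{M\cap Q}(\lambda)$ is evaluated at the full $\lambda$ (no restriction occurs, as $\af_{M\cap Q}=\af_Q$), whereas $d_P'$ is evaluated at the restricted $\lambda_P$, so the cofrugal case cannot be obtained by mirroring the frugal computation with the roles of $c$ and $d$ exchanged. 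The paper treats it by a different manipulation: starting from the definition of $(cd)_Q'$, one substitutes the relative form of the cofrugal recursion of Lemma~\ref{recQM}(ii), namely $c_{P'}(\lambda)\hat\theta_Q^{P'}(\lambda)^{-1}=\sum_{Q\subset P\subset P'}\epsilon_Q^P(c^P)'_{M\cap Q}(\lambda)\hat\theta_P^{P'}(\lambda)^{-1}$, interchanges the two sums, and recognises the resulting inner sum over $P'\supset P$ as the prime of the induced $(G,P)$-family; even there the functions $d_{P'}$ come out evaluated at $\lambda$ rather than $\lambda_P$, a discrepancy which is harmless at $\lambda=0$ (where the lemma is applied) but which shows that this half of the lemma is not a formal mirror image of the other. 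To complete your proof you would have to carry out this second computation explicitly rather than assert a symmetry that the formula does not have.
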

\begin{proof}
The proof for frugal $d$ is analogous to the case of $(G,L)$-families. Using the relative version of the first identity of Lemma~\ref{recQM}(ii) with $P'$ in place of~$Q$, we get
\begin{align*}
(cd)_Q'(\lambda)&=\sum_{P'\supset Q}\epsilon_Q^{P'}
c_{P'}(\lambda)\hat\theta_Q^{P'}(\lambda)^{-1}
\sum_{P\supset P'}d_P'(\lambda_P)\theta_{P'}^P(\lambda)^{-1}\\
&=\sum_{P\supset Q}d_P'(\lambda_P)
\sum_{P':Q\subset P'\subset P}\epsilon_Q^{P'}c_{P'}(\lambda)
\hat\theta_Q^{P'}(\lambda)^{-1}\theta_{P'}^P(\lambda)^{-1}.
\end{align*}
Similarly, using the second identity of Lemma~\ref{recQM}(ii) with $M'$ in place of~$G$, we get
\begin{align*}
(cd)_Q'(\lambda)&=\sum_{P'\supset Q}\epsilon_Q^{P'}
d_{P'}(\lambda)\theta_{P'}(\lambda)^{-1}
\sum_{P:Q\subset P\subset P'}\epsilon_Q^P
(c^P)_{M\cap Q}'(\lambda)\hat\theta_P^{P'}(\lambda)^{-1}\\
&=\sum_{P\supset Q}(c^P)_{M\cap Q}'(\lambda)
\sum_{P'\supset P}\epsilon_P^{P'}d_{P'}(\lambda)
\hat\theta_P^{P'}(\lambda)^{-1}\theta_{P'}(\lambda)^{-1}.
\end{align*}
Now it remains to apply the definitions of~$(c^P)_Q'$ and $d_P'$, resp. 
\end{proof}
 
\section{The geometric side of the trace formula}

\subsection{Truncation}
\label{trunc}

Before introducing a new sort of geometric expansion, let us supply the details omitted in the introduction. We fix a number field~$F$ and denote the product of its archimedean completions by~$F_\infty$. For any linear algebraic group $G$, tacitly assumed to be connected and defined over~$F$, the group of continuous homomorphisms from $G(\A)$ to the additive group~$\R$ has the structure of a real vector space. We denote its dual space by~$\af_G$ and define a continuous homomorphism
$H_G:G(\A)\to\af_G$ by $\chi(g)=\langle \chi,H_G(g)\rangle$ for all $\chi\in\af_G^*$. The kernel of $H_G$ is then the group~$G(\A)^1$. From now on, the letter $G$ will be reserved for a reductive group.

In order to save space, the set $V(F)$ of $F$-rational points of any affine $F$-variety~$V$ will henceforth simply be denoted by~$V$ and the set of its adelic points by the corresponding boldface letter~$\mathbf V$. As an exception, the letter $\bK$ will denote a maximal compact subgroup of our adelic group~$\G$ such that $G(F_\infty)\bK$ is open and $\G=\bP\bK$ for every parabolic subgroup~$P$. One extends the map $H_P:\bP\to\af_P$ to $\G$ by seting $H_P(pk)=H_P(p)$ for $p\in\bP$ and $k\in\bK$, not indicating the dependence on the choice of $\bK$ in the notation.

A truncation parameter $T$ for the pair $(\G,\bK)$ is a family of elements $T_P\in\af_P$ indexed by the parabolic subgroups such that the modified maps $H_P^T(x)=H_P(x)-T_P$ satisfy $H_{\gamma P\gamma^{-1}}^T(\gamma x)=H_P^T(x)$ for all $\gamma\in G$ and such that $H_{P'}^T(x)$ is the projection of $H_P^T(x)$ for arbitrary parabolics $P\subset P'$. Thereby we have eliminated the need for standard parabolic subgroups. The set of truncation parameters has the structure of an affine space such that the evaluation at any minimal parabolic~$P_0$ is an isomorphism onto~$\af_{P_0}$.

For any maximal parabolic~$P'$, let $-\hat\tau_{P'}^T$ be characteristic function of
\[
\{x\in \G \mid H_{P'}^T(x)\in\af_{P'}^+\},
\]
where $\af_{P'}^+$ denotes the positive chamber in~$\af_{P'}$. For general~$P$, set
\[
\hat\tau_P^T(x)=\prod_{\substack{P'\supset P\\\text{max.}}}\hat\tau_{P'}^T(x).
\]
Thereby the usual sign factors $\epsilon_P=(-1)^{\dim\af_P^G}$ in the integrand of $J^T(f)$ have been incorporated into these cut-off functions. We mention that the integral converges for all values of~$T$ (see~\cite{Ho1}) and depends polynomially on~$T$ (see~\cite{A-inv}).

\subsection{Expansion in terms of geometric conjugacy classes}

In the distribution $J^T(f)$, one cannot isolate the contribution of a group-theoretic conjugacy class in $G(F)$ because the representatives of a coset $\gamma N$ appearing in $K_P$ belong to various conjugacy classes. In the coarse geometric expansion~(see~\cite{A-trI}), conjugacy has therefore been replaced by a coarser equivalence relation for which all elements in such cosets are equivalent. The finest such relation turns out to be just conjugacy of semisimple components. The fine geometric expansion is based on an intermediate refinement that depends on a choice of a finite set of places of~$F$, but it is still not fully explicit. We propose to use geometric conjugacy for a start, deferring the finer expansions to the later step of stabilisation. It is induction of conjugacy classes that makes this work.

Let $P$ be a parabolic subgroup of~$G$ with unipotent radical~$N$. Recall that induction as defined in Theorem~\ref{induction} is actually a map from the set of conjugacy classes in~$P/N$ to conjugacy classes in~$G$ that does not depend on the choice of a Levi component. We define the contribution of a geometric conjugacy class $C$ in $G$ to the kernel function $K_P$ as
\[
K_{P,C}(x,y)=\sum_{\substack{D\subset P/N\\\Ind_P^GD=C}}
\sum_{\gamma\in D}\int_\bN f(x^{-1}\gamma ny)\,dn,
\]
where we denote conjugacy classes in $P/N$ by~$D$, the letter $C$ being reserved for conjugacy classes in~$G$. We stick to tradition and avoid the awkward expression $\gamma N\in D$ under the summation sign. Alternatively, we can write
\[
K_{P,C}(x,y)=\sum_{\gamma\in(C\cap P)N/N} \int_\bN f(x^{-1}\gamma ny)\,dn.
\]
The convergence of $K_P$ implies that of its subsum $K_{P,C}$, and it is obvious that
\[
K_P=\sum_C K_{P,C}.
\]
The contribution of the class $C$ to the trace distribution is defined formally as
\[
J_C^T(f)=\int_{G\bs\G^1}\sum_P K_{P,C}(x,x)\hat\tau_P^T(x)\,dx,
\]
but its convergence and the validity of the expansion
\[
J^T(f)=\sum_CJ_C^T(f)
\]
depends on the following condition.
\begin{conjecture}\label{expansion}
For $f\in C_c^\infty(G(\A)^1)$, we have
\[
\sum_C\int_{G\bs\G^1}\left|\sum_P K_{P,C}(x,x)\hat\tau_P^T(x)\right|dx<\infty.
\]
\end{conjecture}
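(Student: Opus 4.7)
The plan is to refine Arthur's argument for absolute convergence of the coarse geometric expansion in \cite{A-trI}, replacing his equivalence-by-semisimple-component with the finer geometric conjugacy used here. I would split the sum by Jordan decomposition: each geometric class $C$ is determined by its semisimple $G(F)$-conjugacy class $[\sigma]$ together with a geometric unipotent conjugacy class in the centraliser $G^\sigma$. Since the number of geometric unipotent classes in any reductive group is finite, each coarse class contains only finitely many fine classes $C$, so the desired summability reduces to two independent tasks: (A) for each fixed $C$, the integral
\[
\int_{G\bs\G^1}\Bigl|\sum_P K_{P,C}(x,x)\hat\tau_P^T(x)\Bigr|\,dx
\]
is finite, and (B) the resulting bounds can be summed over the semisimple $G(F)$-conjugacy classes $[\sigma]$.

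For Task (B), I would adapt Arthur's original truncation argument from the coarse expansion. After the standard parabolic descent formula, which expresses the contributions with semisimple part conjugate to $\sigma$ in terms of a modified integrand on the centraliser $G^\sigma$ applied to a function derived from $f$ by partial constant-term-type operations, the summation over $[\sigma]$ is controlled by the same lattice-point-counting and rapid-decay estimates that Arthur uses in the coarse case. For Task (A) the cancellation between parabolics that Arthur exploits at the coarse level is no longer directly available, since a single fine $C$ sits in a complicated way inside the various $P\cap C$. Instead, I would use the canonical parabolic $Q=Q_C$ of the unipotent part of $C$ furnished by Theorem~\ref{canpar} as an anchor, reorganising the alternating sum $\sum_P(\dots)\hat\tau_P^T$ according to the relative position of $P$ and $Q$, and invoking the $(G,Q)$-family machinery of subsection~\ref{GQfam}, notably Lemma~\ref{GQ}, to convert the result into an object holomorphic at the origin and thus integrable. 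Under Hypothesis~\ref{NC}, the special prehomogeneous structure on $\gamma N/N^C$ given by Theorems~\ref{prehom} and~\ref{canpar}(ii), combined with the mean-value Theorem~\ref{mean}, converts the summation over $\gamma$ into an adelic integral amenable to direct estimation.

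The main obstacle is Task (A): the difficulty of isolating a single geometric conjugacy class is exactly what makes the statement a conjecture rather than a theorem. One must replace Arthur's parabolic cancellation by a cancellation tied to the canonical parabolic and the associated prehomogeneous vector space, which is only guaranteed to be available when Hypothesis~\ref{NC} holds for the class in question; this is consistent with the paper's stated restriction to conjugacy classes of the types to which that hypothesis applies. A secondary, more technical, obstacle is matching the bounds in (A) with the summation in (B) uniformly in $[\sigma]$, which requires tracking how both the canonical-parabolic data and the constants in the descent formula depend on $\sigma$, and verifying that their growth is dominated by the rapid decay produced by Arthur's counting estimates on each centraliser~$G^\sigma$.
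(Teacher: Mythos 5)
The statement you were asked about is labelled a \emph{conjecture} in the paper, and the paper offers no proof of it: the author only remarks that it is a version of the convergence theorem of Arthur's first trace-formula paper whose proof ``will require more delicate estimates.'' So there is no argument in the paper for you to have matched, and your proposal should be judged purely on whether it closes the gap. It does not. Your Task (A) --- finiteness of $\int_{G\bs\G^1}|\sum_P K_{P,C}(x,x)\hat\tau_P^T(x)|\,dx$ for a single geometric class $C$ --- is exactly the open problem, and the tools you invoke for it do not resolve it. Arthur's cancellation operates at the level of coarse classes $\mathfrak o$ (semisimple-part equivalence); once $\mathfrak o$ is split into geometric classes $C$ and the absolute value is moved inside the sum over $C$, the triangle inequality runs the wrong way, so nothing from the coarse proof transfers. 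Your proposed substitute --- reorganising by the canonical parabolic and invoking Lemma~\ref{GQ} --- cannot work as stated: Lemma~\ref{GQ} is an algebraic statement about holomorphy of finite sums of meromorphic functions in a parameter $\lambda$, and in the paper it is brought to bear only \emph{after} a damping factor $e^{-\langle\lambda,H_Q(x)\rangle}$ has been inserted and the resulting integrals have been shown (conditionally, via Hypotheses~\ref{damp} and~\ref{convZeta}) to converge and continue meromorphically. It does not by itself yield integrability of an undamped alternating sum. Likewise, Theorem~\ref{mean} and Hypothesis~\ref{NC} are used in the paper (Lemma~\ref{int2sum}) \emph{under the assumption} that Conjecture~\ref{expansion} already holds, so appealing to that machinery to establish the conjecture is circular in the paper's own logical order.

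A secondary point: your reduction in Task (B) also needs care. A geometric conjugacy class over $F$ is not quite the same as a pair (semisimple class, geometric unipotent class in $G^\sigma$) because $G^\sigma$ may be disconnected and distinct unipotent classes of $G^\sigma$ can be fused in $G$; the finiteness of the fibre over each $[\sigma]$ survives, but the descent formula you sketch would have to be set up for the connected centraliser and then corrected. None of this is fatal to the overall strategy --- indeed your outline is broadly consistent with the programme the paper pursues, and your identification of Task (A) as the essential obstruction is accurate --- but as a proof the proposal has a genuine gap precisely where the conjecture is hard, and the estimates that would fill it are not supplied or even sketched in a checkable form.
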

This statement would also make sense for function fields~$F$. It is a version of the convergence theorem of~\cite{A-trI}, but the proof will require more delicate estimates, not to mention its extension to rapidly decreasing smooth integrable functions $f$ of noncompact support. The merit of such a result would depend on our ability to find a useful alternative description of the distributions $J_C^T(f)$. We will see that on the way to this goal even more subtle convergence results are needed.

\section{Rearranging the geometric side}

\subsection{Replacing integrals by sums}

In the term of~$K_{P,C}$ corresponding to a conjugacy class~$D$ in~$P/N$, the integral over $\bN$ can be split into an integral over $\bN/\bN^D$ and an integral over~$\bN^D$ in the notation of Hypothesis~\ref{NC}. We want to replace the first of these integrals by a sum. This is analogous to Theorem~8.1 in~\cite{A-trI}.

Note that $D$ defines a $P$-conjugacy class $D'=\Infl^PD$ and, conversely, each $P$-conjugacy class $D'$ in $C\cap P$ determines a conjugacy class $D=D'N/N$ in~$P/N$. We define the modified kernel function
\[
\tilde K_{P,C}(x,y)=
\sum_{\substack{D\subset P/N\\\Ind_P^GD=C}}\sum_{\gamma\in D'N^D/N^D}\int_{\bN^D}
f(x^{-1}\gamma n'x)\,dn'
\]
and, formally, the modified distribution
\[
\tilde J_C^T(f)=\int_{G\bs\G^1}
\tilde K_{P,C}(x,y)\hat\tau_P^T(x)\,dx.
\]
\begin{hypoth}\label{hypoth2}
\begin{enumerate}
\item[(i)] The analoge of Conjucture~\ref{expansion} is true for~$\tilde J_C^T(f)$.
\item[(ii)] For all parabolic subgroups~$P$ and conjugacy classes $D\subset P/N$, we have
\begin{multline*}
\int_{P\bs\G^1}\sum_{\delta\in D}\Biggl|
\sum_{\gamma\in(C\cap\delta N)N^D/N^D}
\int_{\bN^D}f(x^{-1}\gamma n'x)\,dn'\\
-\int_\bN f(x^{-1}\delta nx)\,dn
\Biggr|\hat\tau_P^T(x)\,dx<\infty.
\end{multline*}
\end{enumerate}
\end{hypoth}
This hypothesis is automatic for groups of $F$-rank one, as $N^D=N$ in that case, and has been checked for classical groups of absolute rank~2 in~\cite{HoWa}.
\begin{lemma}\label{int2sum}
Under Conjecture~\ref{expansion} and Hypotheses~\ref{NC} and~\ref{hypoth2}, we have
\[
J_C^T(f)=\tilde J_C^T(f).
\]

\end{lemma}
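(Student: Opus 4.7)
Both $J_C^T(f)$ and $\tilde J_C^T(f)$ are given by absolutely convergent expressions under Conjecture~\ref{expansion} and Hypothesis~\ref{hypoth2}(i), so we may rearrange freely. Each summand $\hat\tau_P^T(x)K_{P,C}(x,x)$ (and its $\tilde K$-counterpart) is $P(F)$-left-invariant, and both $\hat\tau_P^T$ and the kernels transform covariantly under $G(F)$-conjugation of the parabolic. The standard reorganization of $\sum_P$ by $G(F)$-conjugacy classes $[P]$ of parabolic $F$-subgroups therefore gives
\[
J_C^T(f)-\tilde J_C^T(f)=\sum_{[P]}\int_{P\bs\G^1}\hat\tau_P^T(x)\bigl[K_{P,C}(x,x)-\tilde K_{P,C}(x,x)\bigr]\,dx,
\]
and it suffices to show, for each such $P$ and each conjugacy class $D\subset P/N$ with $\Ind_P^GD=C$, that the $D$-component of the integrand vanishes after integration.

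Fix $(P,D)$, a rational representative $\delta_0\in D$, and let $S$ be the full $P$-stabilizer of the coset $\delta_0 N$ under conjugation, so that $D\cong P/S$ as a $P(F)$-set. Partitioning the $D$-component by cosets $\delta N\in D$ and invoking Hypothesis~\ref{hypoth2}(ii) for absolute convergence, we may unfold the sum $\sum_{\delta N\in D}$ against $\int_{P\bs\G^1}$, a manoeuvre legitimized by the fact that conjugation by elements of $P(F)$ preserves the Haar measures on $\bN$ and $\bN^D$ by the product formula. The $D$-contribution then becomes
\[
\int_{S\bs\G^1}\hat\tau_P^T(x)\Bigl(\int_\bN f(x^{-1}\delta_0 nx)\,dn-\!\!\sum_{\gamma\in(D'\cap\delta_0 N)N^D/N^D}\!\!\int_{\bN^D}f(x^{-1}\gamma n'x)\,dn'\Bigr)\,dx.
\]

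The concluding step applies Theorem~\ref{mean} to the affine space $V=\delta_0 N/N^D$, which is an $N/N^D$-torsor and, by Hypothesis~\ref{NC}(i), a special prehomogeneous $P_{\delta_0 N}$-space (with $P_{\delta_0 N}$ the identity component of $S$); its generic orbit is $O=(D'\cap\delta_0 N)N^D/N^D$ by Theorem~\ref{prehom}(ii). For each $x\in\G^1$ set
\[
h_x(v)=\int_{\bN^D}f(x^{-1}v_\ell n'x)\,dn',\qquad v\in\V,
\]
for any lift $v_\ell\in\delta_0\bN$ of $v$; this is well-defined and compactly supported on~$\V$ since $\bN^D$ is unimodular and $f\in C_c^\infty(\G^1)$. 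Under this parameterization the inner integral in the previous display equals $\int_\V h_x(v)\,dv$ and the inner sum equals $\sum_{\xi\in O(F)}h_x(\xi)$. Decomposing the outer integral as an iterated integral through $\bP_{\delta_0 N}$ and performing a change of variables that transfers the $P_{\delta_0 N}$-action from $x$ to $v$, the innermost integration against the difference is precisely the vanishing asserted by the mean-value formula.

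The principal obstacle lies in this last reduction. One must verify that $\int_{S\bs\G^1}$ factors as an iterated integral whose innermost slice is exactly the $\int_{\bP_{\delta_0 N}/P_{\delta_0 N}}$-integration appearing in Theorem~\ref{mean} (up to the finite index of $P_{\delta_0 N}$ in~$S$), track the modular characters arising from conjugation by adelic elements of the stabilizer, and, if $X(P_{\delta_0 N})$ is nontrivial, pass to the derived subgroup as in the remarks following Theorem~\ref{mean}. A secondary issue is checking the compact support of $h_x$ uniformly enough for Fubini to apply, which follows from $f\in C_c^\infty(\G^1)$ and the closed embedding of $V$ into $P/N^D$.
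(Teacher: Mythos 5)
Your overall strategy coincides with the paper's: reduce, using Conjecture~\ref{expansion} and Hypothesis~\ref{hypoth2}, to the vanishing of each $(P,D)$-contribution integrated over $P\bs\G^1$, and obtain that vanishing from the mean-value formula of Theorem~\ref{mean} applied to the $N/N^D$-torsor $\delta N/N^D$, whose generic orbit is the projection of $C\cap\delta N$ by Theorem~\ref{prehom}(ii). However, as a proof the proposal has a genuine gap, which you yourself flag as the ``principal obstacle'': the step that actually produces the vanishing --- factoring $\int_{S\bs\G^1}$ into an iterated integral whose innermost slice is the integration occurring in Theorem~\ref{mean} --- is exactly the nontrivial content of the lemma, and it is not carried out. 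It is not a routine Fubini argument: $X(P_{\delta_0 N})$ is never trivial when $P\neq G$ (the split central torus $A_P$ already stabilises the coset $\delta_0 N$), so Theorem~\ref{mean} cannot be applied to $P_{\delta_0 N}$ itself; the stabiliser need not be unimodular, so the invariant measures your iterated integral presupposes on $\mathbf S\bs\G^1$ are not automatic (this is the real force of ``tracking the modular characters''); and the comparison between $S$, its identity component and the derived group must be made compatible with the measure normalisation in Theorem~\ref{mean}. In addition, your unfolding premise that $D\cong P/S$ as a $P(F)$-set is false in general: $D$ denotes the set of $F$-points of a geometric conjugacy class in $P/N$, which may split into several $P(F)$-classes, so the $D$-contribution is not a single integral over $S\bs\G^1$.

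The paper circumvents precisely these difficulties by running the argument in the opposite direction. It starts from the mean-value identity for the derived group $P'_{\delta N}$ of the stabiliser (so that the character group is trivial and no modular characters or relative invariants intervene), applied to the test function $h(v)=\hat\tau_P^T(y)\int_{\bN^D}f(x^{-1}p_2^{-1}vnp_2x)\,dn$; it then integrates over $p_2\in P\bP_{\delta N}'\bs\bP^1$, sums over $\delta\in D$ (no transitivity of $P(F)$ on $D$ is needed), and finally integrates over $y\in\bP^1\bs\G^1$, reassembling the integral over $P\bs\G^1$ whose vanishing is wanted; in this order only quotients with evident invariant measures appear, and the interchanges are covered by Hypothesis~\ref{hypoth2} and Conjecture~\ref{expansion}. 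If you wish to keep your ``unfold down to the stabiliser'' formulation, you must supply the measure-theoretic facts listed above yourself; as written, the conclusion is asserted rather than proved.
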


\begin{proof}
Granting Hypothesis~\ref{NC}, Theorem~\ref{mean} yields the vanishing of
\[
\int_{P_{\delta N}'\bs\bP_{\delta N}'}
\left(\sum_{\gamma\in(C\cap\delta N)N^D/N^D}
h(p_1^{-1}\gamma p_1)
-\int_{\bN/\bN^D} h(p_1^{-1}\delta n''p_1)\,dn''\right)dp_1
\]
for all $\delta\in D$ and $h\in C_c^\infty(\delta\bN/\bN^D)$, where $P'_{\delta N}$ denotes the derived group of~$P_{\delta N}$. We plug in
\[
h(v)=\hat\tau_P^T(y)\int_{\bN^D}f(x^{-1}p_2^{-1}v np_2x)\,dn
\]
with $p_2\in\bP^1$ and $y\in\G^1$, and substitute $p_1n=n'p_1$ in the integral over~$\bN^D$. Observing that the domain of integration over~$p_1$ can be written as $P\bs P\bP_{\delta N}'$, we integrate over $p_2\in P\bP_{\delta N}'\bs \bP^1$. Then we sum over $\delta\in D$ and take the combined integral over $p=p_1p_2\in P\bs\bP^1$ outside the sum. Combining the summation over $\gamma$ with that over $\delta$ into one summation and combining the integral over $n''$ with that over~$n'$ into one integral, we obtain
\begin{multline*}
\hat\tau_P^T(y)\int_{P\bs\bP^1}
\Biggl(\sum_{\gamma\in(C\cap DN)N^D/N^D}
\int_{\bN^D}f(y^{-1}p^{-1}\gamma n'py)\,dn'\\
-\sum_{\delta\in D}\int_{\bN} f(y^{-1}p^{-1}\delta n py)
\,dn\Biggr)\,dp=0.
\end{multline*}
Then we integrate over $y\in\bP^1\bs\G^1$ and combine this integral with that over $p$, observing that $\hat\tau_P^T(y)=\hat\tau_P^T(py)$, to get
\begin{multline*}
\int_{P\bs\G^1}
\Biggl(\sum_{\gamma\in D'N^D/N^D}
\int_{\bN^D}f(x^{-1}\gamma n'x)\,dn'\\
-\sum_{\delta\in D}\int_{\bN} f(x^{-1}\delta nx)
\,dn\Biggr)\hat\tau_P^T(x)\,dx=0.
\end{multline*}
All of these operations are justified under Hypothesis~\ref{hypoth2}(i).

We sum this expression over the finitely many standard parabolics $P$ and respective classes~$D$. Then we split the integral into an integral over $G\bs\G^1$ and a sum over $P\bs G$ and interchange the latter integral with the former sum. The latter sum can be replaced by a sum over all parabolic subgroups conjugate to~$P$, because the relevant objects attached to different parabolics (the unipotent radical~$N$, the set of conjugacy classes $D$ in $P/N$ with $\Ind^PD=C$ and the subgroups~$N^D$) correspond to each other under conjugation. Finally, we split the integral into the difference of $J_C^T(f)$ and~$\tilde J_C^T(f)$, which is justified by Conjecture~\ref{expansion} and Hypothesis~\ref{hypoth2}(ii).
\end{proof}

\subsection{Ordering terms according to canonical par\-abolics}

We continue rewriting our formula for $J_C^T(f)$. The basic idea for the next step is that a sum over all elements of $G$ can be written as a sum over all parabolic subgroups $Q$ of partial sums over those elements whose canonical parabolic is~$Q$. This applies to~$K_G(x,x)$, but $J_C^T(f)$ also contains terms with $P\ne G$, which are indexed by cosets $\gamma N$ rather than elements. This is why the previous transformation was necessary.

\begin{lemma}\label{newJC}
Let $Q$ be the canonical parabolic of some element of~$C$. Under Hypotheses~\ref{NC}, \ref{hypoth2}(i) and~\ref{hypoth3} (the latter to be stated in the course of the proof), we have
\[
\tilde J_C^T(f)=\int_{Q\bs\G^1}\sum_{N'\subset Q}
\sum_P
\sum_{\substack{\gamma\in(C\cap Q\can)N'/N'\\
N^{[\gamma]}=N'\\
\gamma\in P\infl}}
\int_{\bN'} f(x^{-1}\gamma n'x)\,dn'\,\hat\tau_P^T(x)\,dx.
\]
Here the representative $\gamma$ is chosen in~$C\cap Q\can$, and $N^{[\gamma]}$ is a notation for $N^D$, where $D$ is the conjugacy class of $\gamma N$ in~$P/N$. If $C$ is unipotent, the condition $N'\subset Q$ can be sharpened to $N'\subset U'$.
\end{lemma}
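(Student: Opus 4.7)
The plan is to rewrite $\tilde J_C^T(f)$ by partitioning the triple-sum over $(P,D,\gamma N^D)$ defining the integrand according to the canonical parabolic of the coset $\gamma N^D$, and then unfolding the outer integration from $G\bs\G^1$ to $Q\bs\G^1$. By Hypothesis~\ref{NC}(ii) each such coset has a well-defined canonical parabolic, namely the common canonical parabolic of the representatives in $D'\cap\gamma N^D$; since $\gamma\in D'\subset C$, this parabolic is $G$-conjugate to~$Q$. Because $N_G(Q)=Q$, the set of $G$-conjugates of~$Q$ is in bijection with $Q\bs G$, so the integrand splits as $\sum_{g\in Q\bs G}\psi(gx)$, where $\psi(x)$ collects only those triples whose canonical parabolic equals~$Q$ itself. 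Standard unfolding then yields $\tilde J_C^T(f)=\int_{Q\bs\G^1}\psi(x)\,dx$.

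The next step is to reindex the triples $(P,D,\gamma N^D)$ with canonical parabolic~$Q$ into the form that appears on the right-hand side. Setting $N':=N^D$, one obtains a normal unipotent subgroup of~$P$ which, via the canonical parabolic structure and Theorem~\ref{canpar}(iv) applied to the unipotent component of a representative $\gamma\in D'\cap Q\can$, lies inside~$Q$. The representative $\gamma$ belongs to $C\cap Q\can$ by Hypothesis~\ref{NC}(ii) and satisfies $\gamma\in P\infl$ by Theorem~\ref{induction2}(iii); the condition $N^{[\gamma]}=N'$ is then the tautology $N^D=N'$. Conversely, any $(P,N',\gamma)$ with the stated constraints recovers a unique $(P,D,\gamma N')$, establishing a bijection of index sets and hence the main identity.

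For the final clause, if $C$ is unipotent then every $\gamma\in C\cap Q\can$ is unipotent with canonical parabolic~$Q$, hence lies in~$U'$ by Theorem~\ref{canpar}(iii). Under Hypothesis~\ref{NC}(ii) the coset $\gamma N^{[\gamma]}$ intersects $C$ in a set whose elements all share this property, so $\gamma N^{[\gamma]}\subset U'$, whence $N^{[\gamma]}\subset\gamma^{-1}U'=U'$; the sum over~$N'$ may accordingly be restricted to $N'\subset U'$.

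The principal obstacle is analytic rather than algebraic. Exchanging the order of summation and integration, and the rearrangement into partial sums indexed by canonical parabolic, demands absolute summability; this is the point at which a new convergence hypothesis (Hypothesis~\ref{hypoth3}) must be formulated and invoked. All the combinatorial identifications are otherwise routine consequences of the structural results collected in Section~1, but verifying the local finiteness of the unfolding sum over $g\in Q\bs G$ against test functions $f\in C_c^\infty(\G^1)$, and confirming that no coset is either omitted or double-counted in the reindexing, constitutes the delicate part.
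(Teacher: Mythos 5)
Your overall route is the paper's own: regroup the triple sum over $(P,D,\gamma N^D)$ according to the common canonical parabolic of $\gamma N^D\cap C$ (which is well defined by Hypothesis~\ref{NC}(ii)), invoke a new absolute-convergence hypothesis to move the sums over $Q$ and $N'$ outward, reindex the pairs $(D,\gamma N')$ by the conditions appearing on the right-hand side, and finally unfold the sum over the $G(F)$-conjugates of $Q$ against the integral over $G\bs\G^1$ to land on $Q\bs\G^1$. That matches the paper step for step, and your identification of where Hypothesis~\ref{hypoth3} must be inserted is exactly right.

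There is, however, one concretely wrong step: your justification of the inclusion $N'\subset Q$. Theorem~\ref{canpar}(iv) states that the unipotent radical $U$ of the \emph{canonical} parabolic of a unipotent $\gamma$ is contained in any parabolic $P$ containing $\gamma$; it yields $U\subset P$, which is the opposite containment from the one you need ($N'\subset Q$, where $N'$ is a normal unipotent subgroup of $P$), and moreover it applies only to unipotent elements while the lemma concerns an arbitrary class $C$. The correct argument, which the paper uses and which you in effect use (without saying so) in the unipotent refinement $N'\subset U'$, is a density argument: $\gamma N'\cap C$ contains the dense subset $\gamma N'\cap C'$ of $\gamma N'$ (Theorem~\ref{prehom}(ii)), all of whose elements lie in $Q\can\subset Q$ by Hypothesis~\ref{NC}(ii); quotients of pairs of such elements form a dense subset of $N'$ contained in the closed subgroup $Q$, whence $N'\subset Q$. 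The same density step is what licenses your inference from ``$\gamma N^{[\gamma]}$ intersects $C$ in a set contained in $U'$'' to ``$\gamma N^{[\gamma]}\subset U'$'', so you should make it explicit there as well. Apart from this, the converse direction of your reindexing bijection (that conditions $(\mathrm{i}')$--$(\mathrm{iii}')$ really recover a valid pair $(D,\gamma N')$, independently of the chosen representative) is asserted rather than checked; the paper devotes a paragraph to it, using Theorem~\ref{induction2}(iii) and Theorem~\ref{canpar}(iii), and it is not entirely automatic.
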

Recall that the sets $Q\can$ and $U'$ were introduced in connection with Theorem~\ref{canpar}(iii). 

The summation over subgroups $N'$ may look weird. Of course, we need only consider subgroups which appear as~$N^D$ in Hypothesis~\ref{NC}. For unipotent conjugacy classes $D$ it is often the case that $N^D$ is the unipotent radical of a parabolic subgroup~$P^D$, namely the smallest parabolic which contains~$P$ and whose unipotent radical is contained in~$U'$. In this case, the sum over~$N'$ can be written as a sum over parabolics $P'$ containing~$Q$.
\begin{proof}
Let us fix a conjugacy class~$C$. The definition of $\tilde J_C^T(f)$ involves, for each~$P$, a sum over conjugacy classes $D$ in~$P/N$. We get the same result if we take the partial sum over those $D$ for which $N^D$ equals a given group $N'$ and add up those partial sums for all possible subgroups $N'$ of $G$.

For each $P$, $N'$ and $D$, we are now facing a sum over cosets $\gamma N'\in(C\cap P)N'/N'$. By the property~(ii) of~$N^D$ according to Hypothesis~\ref{NC}, the elements of $\gamma N'\cap C$ have the same canonical parabolic. Thus, we may similarly take the partial sum over those cosets for which that canonical parabolic equals a given group~$Q$ and add up the partial sums for all possible parabolics $Q$ in~$G$. As a result, we see that $\tilde J_C^T(f)$ equals
\[
\int_{G\bs\G^1}
\sum_P\sum_{N'}
\sum_{\substack{D\subset P/N\\N^D=N'}}
\sum_Q
\sum_{\substack{\gamma\in D'N'/N'\\
\gamma N'\cap C\subset Q\can}}\int_{\bN'}
f(x^{-1}\gamma n'x)\,dn'\,\hat\tau_P^T(x)\,dx.
\]

We want to move the summations over $Q$ and $N'$ leftmost. This is permitted under the following
\begin{hypoth}\label{hypoth3}
The integral
\[
\int_{G\bs\G^1}
\sum_Q\sum_{N'}\left|
\sum_P
\sum_{\substack{D\subset P/N\\N^D=N'}}
\sum_{\substack{\gamma\in D'N'/N'\\
\gamma N'\cap C\subset Q\can}}\int_{\bN'}
f(x^{-1}\gamma n'x)\,dn'\,\hat\tau_P^T(x)\right|\,dx
\]
is convergent.
\end{hypoth}

When $P$ is fixed, $D$ runs over a finite set, hence the order of the two inner summations is irrelevant. They can be written as a single sum over all pairs $(D,\gamma N')$ satisfying the conditions
\begin{enumerate}
\item[(i)] $N^D=N'$,
\item[(ii)] $\gamma\in D'N'/N'$, where $D'=C\cap DN$,
\item[(iii)] $\gamma N'\cap C\subset Q\can$.
\end{enumerate}
Since the set $\gamma N'\cap C$ is dense in~$\gamma N'$, quotients of its elements form a dense subset of~$N'$. Thefore, condition~(iii) can only be satisfied if $N'\subset Q$. If $C$ is unipotent, then $Q\cap C\subset U'$ by Theorem~\ref{canpar}(iii), and we must even have $N'\subset U'$.

Condition~(iii) also implies that
\begin{enumerate}
\item[($\mathrm{iii'}$)] $\gamma\in(C\cap Q\can)N'/N'$.
\end{enumerate}
Condition~(ii) shows that the representative $\gamma$ of $\gamma N'$ can be chosen in~$D'$, hence
\begin{enumerate}
\item[($\mathrm{ii'}$)] $\gamma\in P\infl$,
\end{enumerate}
and that $D$ is uniquely determined by $P$ and~$\gamma N'$. If we denote $N^D$ by $N^{[\gamma]}$, condition~(i) can be rewritten as
\begin{enumerate}
\item[($\mathrm{i'}$)] $N^{[\gamma]}=N'$.
\end{enumerate}

Conversely, suppose that we are given a coset $\gamma N'$ satisfying conditions ($\mathrm{i'}$), ($\mathrm{iii'}$) and~($\mathrm{ii'}$), the latter for a choice of $\gamma$ in~$C\cap Q\can$. Let $D$ be the conjugacy class of the image of $\gamma$ in~$P/N$. Then condition~(i) is satisfied, and hence $N'\subset P$. Therefore $D$ is independent of the choice of the representative~$\gamma$. Condition~($\mathrm{ii'}$) shows in view of Proposition~\ref{induction2}(iii) that $\gamma$ lies in $D'=\Infl^PD$, and in view of $\gamma\in C$ we have $D'=C\cap DN$. Thus condition~(ii) is satisfied. In hindsight we see that any other representative with the same properties lies in the $P$-orbit $D'$, hence it also satisfies condition~($\mathrm{ii'}$). Since $N'\subset Q$, we have $\gamma N'\subset Q$, and condition~(iii) follows by Theorem~\ref{canpar}(iii).

The equivalence of the two sets of conditions shows that $\tilde J_C^T(f)$ equals
\[
\int_{G\bs\G^1}\sum_Q\sum_{N'\subset Q}
\sum_P
\sum_{\substack{\gamma\in(C\cap Q\can)N'/N'\\
N^{[\gamma]}=N'\\
\gamma\in P\infl}}
\int_{\bN'} f(x^{-1}\gamma n'x)\,dn'\,\hat\tau_P^T(x)\,dx.
\]
If $Q$ is the canonical parabolic of some element of~$C$, we can obtain those of the other ones by conjugating with elements of~$Q\bs G$. Thus, rather than summing over all parabolics~$Q$, we may fix one of them, replace $x$ by $\delta x$ and insert a summation over $\delta\in Q\bs G$. Combining that summation with the exterior integral, we obtain our result.
\end{proof}
If we are allowed to interchange the summations over $P$ and $\gamma N'$, then $\tilde J_C^T(f)$ becomes
\[
\int_{Q\bs\G^1}\sum_{N'\subset Q}
\sum_{\gamma\in(C\cap Q\can)N'/N'}
\sum_{\substack{P\in\mathcal P_\gamma\infl\\
N^{[\gamma]}=N'}}
\int_{\bN'} f(x^{-1}\gamma n'x)\,dn'\,\hat\tau_P^T(x)\,dx.
\]
The set $\mathcal P_\gamma\infl$ was introduced in Theorem~\ref{induction2}(iv). According to our notational conventions in this section, $\mathcal P_\gamma\infl$ actually stands for $\mathcal P_\gamma\infl(F)$. The integral over~$\bN'$ is independent of~$P$ and can be extracted from the sum over~$P$.
\begin{cor}\label{newJCcor}
If only finitely many parabolic subgroups $P$ occur in the formula of Lemma~\ref{newJC}, then
\[
\tilde J_C^T(f)=\int_{Q\bs\G^1}\sum_{N'\subset Q}
\sum_{\gamma\in (C\cap Q\can)N'/N'}
\int_{\bN'}f(x^{-1}\gamma n'x)\,dn'\,
\chi_{\gamma N'}^T(x)\,dx,
\]
where
\[
\chi_{\gamma N'}^T(x)
=\sum_{\substack{P\in\mathcal P_\gamma\infl\\
N^{[\gamma]}=N'}}\hat\tau_P^T(x).
\]
\end{cor}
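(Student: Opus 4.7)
The plan is straightforward: start from the expression for $\tilde J_C^T(f)$ furnished by Lemma~\ref{newJC} and interchange the summations over $P$ and $\gamma N'$, then repackage the resulting $P$-dependent factor as $\chi_{\gamma N'}^T(x)$.

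First I would observe that in the formula of Lemma~\ref{newJC}, the integrand $\int_{\bN'}f(x^{-1}\gamma n'x)\,dn'$ depends only on $\gamma$, $N'$ and $x$, and not on $P$. Hence, once the order of summation is reversed, this integral may be pulled out of the inner sum over $P$. After this extraction, the only remaining $P$-dependence is in the cutoff $\hat\tau_P^T(x)$ and in the constraints $N^{[\gamma]}=N'$ and $\gamma\in P\infl$ that tie $P$ to the pair $(\gamma,N')$.

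Next I would identify the sum over the admissible $P$ with $\chi_{\gamma N'}^T(x)$. By definition, the condition $\gamma\in P\infl$ is exactly $P\in\mathcal P_\gamma\infl$ in the sense of Theorem~\ref{induction2}(iv), and the additional requirement $N^{[\gamma]}=N'$ is the same on both sides. Thus the remaining inner sum is precisely $\chi_{\gamma N'}^T(x)$ as defined in the statement, and substitution yields the asserted formula.

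The only step to justify is the interchange of summations. This is where the hypothesis that only finitely many $P$ appear in the expression of Lemma~\ref{newJC} is used: swapping a finite sum with any other summation or integral is trivially permissible, and absolute integrability for the remaining summations and the outer integral over $Q\bs\G^1$ is already secured by Hypothesis~\ref{hypoth3} invoked in the proof of Lemma~\ref{newJC}. I do not anticipate any substantive obstacle; the result is essentially a bookkeeping rearrangement of Lemma~\ref{newJC}, the main point being the $P$-independence of the $\bN'$-integral together with the identification $\{P:\gamma\in P\infl\}=\mathcal P_\gamma\infl$ built into Theorem~\ref{induction2}(iv).
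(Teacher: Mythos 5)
Your proposal is correct and follows exactly the paper's own route: the paper likewise obtains the corollary by interchanging the sums over $P$ and $\gamma N'$ (permissible since only finitely many $P$ occur), noting that the integral over $\bN'$ is independent of $P$ so it can be extracted, and identifying the remaining inner sum over $P$ with $\chi_{\gamma N'}^T(x)$.
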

There is apparently no uniform argument justifying such an interchange of summations in general. Below, we will describe approaches to this problem and solutions in partial cases.

\subsection{Truncation classes}
\label{trclass}

We have sticked to geometric conjugacy classes so far because they afford a clean notion of induction. However, we are forced to split them up as evidence shows that various elements of the same class may behave differently in our formulas. For a moment, let us distinguish notationally between varieties and their sets of $F$-rational points again. While the sets $\mathcal P_\gamma\infl$ for various elements $\gamma$ in the same geometric conjugacy class $C$ are in bijection with each other under conjugation, the sets $\mathcal P_\gamma\infl(F)$ for $\gamma\in C(F)$ may be different if the elements are not $G(F)$-conjugate. This may happen even for unipotent classes.

For the lack of a better idea, we call elements $\gamma_1$, $\gamma_2$ of $G(F)$ truncation equivalent if they belong to the same geometric conjugacy class and if every inner automorphism mapping $\gamma_1$ to $\gamma_2$ will map $\mathcal P_{\gamma_1}\infl(F)$ onto~$\mathcal P_{\gamma_2}\infl(F)$. This is an equivalence relation, because conjugate $F$-rational parabolics are $G(F)$-conjugate. The equivalence classes for this relation will be called truncation classes.

It follows from the definition that an element of $P\infl(F)$ will also belong to $P^{\prime\,\mathrm{infl}}(F)$ for any parabolic $P'$ containing~$P$. Thus, if $P\in\mathcal P_\gamma\infl(F)$, then $P'\in\mathcal P_\gamma\infl(F)$. The inclusion relation among the sets $\mathcal P_\gamma\infl(F)$ therefore defines a partial order on the finite set of truncation classes $O$ in a given geometric conjugacy class~$C(F)$.

In order to split up $J_C^T(f)$ into contributions of truncation classes, we have to do so for the kernel functions~$K_{P,C}$. Thus, to every $F$-rational coset $\gamma N$ meeting~$C$, we have to assign a truncation class. Evidence suggests that we should pick the minimal truncation class $O$ meeting $\gamma N(F)$. Its uniqueness will have to be proved.

We expect that this definition produces the correct grouping of terms so that all of the previous discussion applies to the resulting distributions $J_O^T(f)$. In Lemma~\ref{newJC}, we have to replace $P\infl(F)$ by the subset $P^{\textrm{min\,infl}}(F)$ of elements whose truncation class is minimal among those meeting~$P$. In the corollary, we have to replace $\mathcal P_\gamma\infl(F)$ by the subset $\mathcal P_\gamma^{\textrm{min\,infl}}(F)$ of those parabolics~$P$ for which $\gamma\in P^{\textrm{min\,infl}}(F)$.

\section{Relation to zeta integrals}
\subsection{Damping factors}
\label{dampsection}

A classical artifice to make the full integral-sum in Lemma~\ref{newJC} (or its analogue for a truncation class~$O$) absolutely convergent, primarily in the case of unipotent orbits, is to insert a damping factor $e^{-\langle\lambda,H_Q(x)\rangle}$ into the integrand of~$\tilde J_O^T(f)$, where $\lambda$ is a complex-valued linear function on~$\af_Q$, to obtain a distribution $J_O^T(f,\lambda)$. This idea goes back to Selberg and has been applied in~\cite{A-rk1}, \cite{Ho0}, \cite{HoWa} and other papers. In all cases considered so far, the following was satisfied.
\begin{hypoth}\label{damp}
Let $O$ be a truncation class. Then the integral-sum
\begin{multline*}
J_O^T(f,\lambda)=
\int_{Q\bs\G^1}
e^{-\langle\lambda,H_Q(x)\rangle}
\sum_{N'\subset Q}\\
\sum_{\gamma\in (O\cap Q\can)N'/N'}
\int_{\bN'}f(x^{-1}\gamma n'x)\,dn'\,
\chi_{\gamma N'}^T(x)\,dx
\end{multline*}
is absolutely convergent for $\Re\lambda$ in a neighbourhood of zero. If a group $N'$ occurring here is the unipotent radical of parabolic subgroup~$P'$, then its contribution $J_{O,P'}^T(f,\lambda)$ is absolutely convergent for $\Re\lambda$ in a certain positive chamber and has a meromorphic continuation to a domain including the point $\lambda=0$.
\end{hypoth}
Assume that all subgroups $N'$ occurring in $J_O^T(f,\lambda)$ are unipotent radicals of parabolics~$P'$, so that the sum over $N'$ is finite. If we choose $\lambda_0$ such that $\C\lambda_0$ is not contained in the singular set of any of these Dirichlet series, then the value of the regular function $J_O^T(f,\lambda)$ at $\lambda=0$ is
\[
J_O^T(f)=\sum_{P'\supset Q}
\mathop{\textnormal{f.p.}}\limits_{z=0}J_{O,P'}^T(f,z\lambda_0),
\]
where \mbox{f.p.} denotes the finite part in the Laurent expansion.

We can remove the dependence on the group~$Q$ by considering the canonical parabolic subgroups $Q(\gamma)$ of all elements $\gamma\in O$ and a family $\lambda$ of linear functions $\lambda(Q)$ on the spaces~$\af_{Q}$ which is coherent in the sense that $\lambda(\delta^{-1}Q\delta)=\lambda(Q)\circ\Ad\delta$ for all $\delta\in G(F)$. In the special case $P'=G$ we get
\[
J_{O,G}^{T,T'}(f,\lambda)=
\int_{G\bs\G^1}
\sum_{\gamma\in O}
e^{-\langle\lambda(Q(\gamma)),H_{Q(\gamma)}^{T'}(x)\rangle}
f(x^{-1}\gamma x)\,
\chi_\gamma^T(x)\,dx,
\]
which depends on an additional truncation parameter $T'$ that was fixed before by requiring $T_Q=0$ for the chosen parabolic~$Q$.

In order to explain how the Hypothesis gives rise to weighted orbital integrals, we need some preparation. One fixes a finite set $S$ of places of~$F$ including all archimedean ones and decomposes the ring of adeles as a direct product $\A=F_S\A^S$ and the group as $G(\A)=G(F_S)G(\A^S)$. Then every function $f\in C_c^\infty(G(F_S))$ can be extended to $G(\A)$ by multiplying it with the characteristic function of $\bK^S=\bK\cap G(\A^S)$. The group $G(F_S)^1$ acts from the right on $G(F)\bs G(\A)^1/\bK^S$ with finitely many orbits, and the stabiliser of the orbit with representative $g\in G(\A^S)^1$ is the $S$-arithmetic subgroup
\[
\Gamma_g=\{\delta_S\mid\delta\in G(F),\,\delta^S\in g\bK^S g^{-1}\},
\]
where $\delta_S$ and $\delta^S$ are the images of $\delta$ in $G(F_S)$ and~$G(\A^S)$, resp. Its subset
\[
O_g=\{\gamma_S\mid\gamma\in O,\,\gamma^S\in g\bK^S g^{-1}\}
\]
is invariant under conjugacy. Let us restrict to the case $P'=G$ for simplicity. If we substitute $x=gy$ with $y\in G(F_S)$, then the integrand vanishes unless $\gamma^S\in g\bK^S g^{-1}$, and the distribution $J_{O,G}^{T,T'}(f,\lambda)$ becomes
\[
\sum_g\int_{\Gamma_g\bs G(F_S)^1}
\sum_{\gamma\in O_g}
e^{-\langle\lambda(Q(\gamma)),H_{Q(\gamma)}^{T'(g)}(y)\rangle}
f(y^{-1}\gamma y)\,
\chi_\gamma^{T(g)}(y)\,dy,
\]
where $T(g)_P=T_P-H_P(g)$, because $H_P(gy)=H_P(g)+H_P(y)$. Ordering the elements according to canonical parabolics, we obtain the $S$-arithmetic version of the original formula
\begin{multline*}
J_{O,G}^{T,T'}(f,\lambda)=\sum_g\sum_{[Q]_{\Gamma_g}}
\int_{\Gamma_g\cap Q(F)\bs G(F_S)^1}
e^{-\langle\lambda(Q),H_{Q}^{T'(g)}(y)\rangle}\\
\sum_{\gamma\in O_g\cap Q\can}
f(y^{-1}\gamma y)\,
\chi_\gamma^{T(g)}(y)\,dy.
\end{multline*}
Here we split the inner sum into subsums over $\Gamma_g\cap Q(F)$-conjugacy classes in $O_g\cap Q\can(F)$, written as sums over $\Gamma_g\cap Q^\gamma(F)\bs \Gamma_g\cap Q(F)$, which can be combined with the integral once the summation over the classes has been moved outside:
\begin{multline*}
J_{O,G}^{T,T'}(f,\lambda)=\sum_g\sum_{[Q]_{\Gamma_g}}\sum_{[O_g\cap Q\can]_{\Gamma_g\cap Q}}
\operatorname{vol}(\Gamma_g\cap Q^\gamma(F)\bs Q^\gamma(F_S)^1)\\
\int_{Q^\gamma(F_S)^1\bs G(F_S)^1}
e^{-\langle\lambda(Q),H_{Q}^{T'(g)}(y)\rangle}
f(y^{-1}\gamma y)\,
\chi_\gamma^{T(g)}(y)\,dy.
\end{multline*}
If $\gamma=q^{-1}\gamma_0q$ with $q\in Q(F_S)$, then the substitution $qy=z$ transforms the integral into its analogue for $\gamma_0$ times $\exp\langle\lambda(Q),H_Q(q)\rangle$. The set $C(F_S)$ consists of finitely many $G(F_S)^1$-conjugacy classes, and each of them intersects $C(F_S)\cap Q\can(F_S)$ in a $Q(F_S)$-conjugacy class. Choosing representatives $\gamma_0\in Q\can(F_S)$, we can write
\begin{multline*}
J_{O,G}^{T,T'}(f,\lambda)=\sum_g\sum_{[Q]_{\Gamma_g}}
\sum_{[\gamma_0]_S}\zeta_G(g,Q,\gamma_0,\lambda)\\
\int_{Q^{\gamma_0}(F_S)^1\bs G(F_S)^1}
e^{-\langle\lambda(Q),H_{Q}^{T'(g)}(y)\rangle}
f(y^{-1}\gamma_0 y)\,
\chi_{\gamma_0}^{T(g)}(y)\,dy,
\end{multline*}
where $\zeta_G(g,Q,\gamma_0,\lambda)$ is a certain Dirichlet series in the variable~$\lambda$. Since the parabolics $Q$ are conjugate,  suitable substitutions reduce the integrals to multiples of
\[
J_G^T(\gamma,f,\lambda)=
\int_{Q^\gamma(F_S)^1\bs G(F_S)^1}
e^{-\langle\lambda,H_Q(y)\rangle}
f(y^{-1}\gamma y)\,
\chi_\gamma^T(y)\,dy
\]
for a fixed~$Q$, where we have set $T'_Q=0$, and we get
\[
J_{O,G}^T(f,\lambda)=
\sum_{[\gamma]_S}\zeta_G(S,\gamma,\lambda)J_G^T(\gamma,f,\lambda).
\]
We do not indicate the dependence of the weighted orbital integral on $S$ as this information is encoded in the argument~$f$. Similarly one can show that, if $N'$ is the unipotent radical of $P'\supset Q$,
\[
J_{O,P'}^T(f,\lambda)=
\sum_{[\gamma N']_S}
\zeta_{P'}(S,\gamma,\lambda)J_{P'}^T(\gamma N',f,\lambda),
\]
where $[\gamma N']_S$ runs through the set of $Q(F_S)$-conjugacy classes in the quotient $(C(F_S)\cap Q\can(F_S))N'(F_S)/N'(F_S)$,
\begin{multline*}
J_{P'}^T(\gamma N',f,\lambda)=
\int_{Q^{\gamma N'}(F_S)^1\bs G(F_S)^1}
e^{-\langle\lambda,H_Q(x)\rangle}\\
\int_{N'(F_S)}f(x^{-1}\gamma n' x)\,dn'\,\chi_{\gamma N'}^T(x)\,dx
\end{multline*}
with the notation $Q^{\gamma N'}$ for the stabiliser of~$\gamma N'$ in~$Q$, and where $\zeta_{P'}(S,\gamma,\lambda)$ are certain Dirichlet series. They depend only on the coset~$\gamma N'(F_S)$, but we prefer to write them as functions of $\gamma$ for reasons that become clear at the end of section~\ref{prin}.

If these zeta functions can be meromorphically continued, one obtains a formula with explicit weight factors, because the Laurent expansion of a product can be expressed by those of its factors. In section~\ref{prin} we will carry this out for the principal unipotent conjugacy class.

One has to regroup the result in terms of conjugacy classes of Levi subgroups~$M$ and to relate the resulting explicit weighted orbital integrals to Arthur's distributions $J_M(\gamma,f)$ if one wishes to compute the coefficients $a^M(S,\gamma)$ in the fine geometric expansion~\cite{A-mix}. So far, this has only been done in special cases (like in~\cite{HoWa}) by an ad-hoc computation.

\subsection{Reduction to vector spaces}
\label{redtoPV}

Let $O$ be a truncation class in a unipotent conjugacy class $C$ and $Q$ the canonical parabolic of one of its elements. We perform a further transformation of $J_O^T(f,\lambda)$ which is in a way contrary to that in Lemma~\ref{int2sum}, because this time we are replacing sums by integrals.

The group~$Q/U$, which can be identified with any Levi subgroup $L$ of~$Q$ defined over~$F$, acts on the group $V=U'/U''$ by conjugation. This action is linear if we endow $U'$ and $U''$ with the structure of vector spaces defined over~$F$ using the exponential maps. By Theorem~\ref{canpar}(ii), $V$ is an $F$-regular prehomogeneous vector space, and the generic orbit is~$C\cap U'/U''$. For each parabolic subgroup $P'\supset Q$, we have the vector subspace $V_{P'}=N'U''/U''$ and quotient space $V^{P'}=V/V_{P'}$, both prehomogeneous by Theorem~\ref{prehom}(i).

Now we switch back to the simplified notation for adelic and rational points of varieties introduced in section~\ref{trunc}. For each $L$-invariant subquotient $W$ of~$U$, we denote by $\delta_W$ the modular character for the action of $\mathbf L$ on~$\mathbf W$ by inner automorphisms. It can be interpreted as an element of~$\af_L^*$, so that $\delta_W(l)=e^{\langle\delta_W,H_L(l)\rangle}$. We have to be cautious the tradition of the adelic trace formula imposes upon us the right action by inverses of inner automorphisms.

\begin{hypoth}\label{convZeta}
In the situation of Hypothesis~\ref{damp}, for a unipotent truncation class $O$ and every Schwarz-Bruhat function $\varphi$ on~$\V$, the integral-sum
\begin{multline*}
Z_O^T(\varphi,\lambda)=
\int_{L\bs\mathbf L\cap\G^1}
e^{-\langle\lambda+\delta_{U/U''},H_L(l)\rangle}
\sum_{N'\subset Q}\\
\sum_{\nu\in(O\cap U')N'/U''N'}
\int_{\bN'\mathbf U''/\mathbf U''}\varphi(l^{-1}\nu n'l)\,dn'\,\chi_{\nu N'}^T(l)\,dl,
\end{multline*}
is absolutely convergent for~$\Re\lambda$ in a neighbourhood of the closure of~$(\af_Q^*)^+$.

If a group $N'$ occurring here is the unipotent radical of a parabolic subgroup~$P'$, then for every Schwarz-Bruhat function $\psi$ on~$\V^{P'}$, the truncated zeta integral
\begin{multline*}
Z_{O,P'}^T(\psi,\lambda)=
\int_{L\bs\mathbf L\cap\G^1}
e^{-\langle\lambda+\delta_{U/N'U''},H_L(l)\rangle}\\
\sum_{\nu\in(O\cap U')N'/U''N'}
\psi(l^{-1}\nu l)\,\chi_{\nu N'}^T(l)\,dl,
\end{multline*}
is absolutely convergent for~$\Re\lambda\in(\af_Q^*)^+$ and extends meromorphically to a neighbourhood of the closure of that domain.
\end{hypoth}
Let us look at the special case $P'=G$. As in Theorem~\ref{zetaInt}, such integrals usually converge when the parameter in the exponent is $\lambda+\delta_V$ with $\Re\lambda$ positive on the chamber $\af_Q^+$. Our parameter shift differs by~$\delta_{U/U'}$, and if this point is contained in the domain of convergence, no terms with $P'\ne G$ are needed for regularisation.

Due to the restriction to~$\G^1$, the usual convergence condition takes the form $X(L_\nu)_F=X(G)_F$ or, equivalently, $A_{L_\nu}=A_G$, where $A_G$ denotes the largest $F$-split torus in the centre of~$G$. It may be violated, but the truncation function~$\chi_\nu^T$ should save the convergence.

\begin{lemma}\label{JasPI}
Under Hypotheses \ref{damp} and~\ref{convZeta}, we have
\[
J_O^T(f,\lambda)=Z_O^T(f_V,\lambda),\qquad
J_{O,P'}^T(f,\lambda)=Z_{O,P'}^T(f_V^{P'},\lambda),
\]
where
\[
f_V(v)=\int_\bK\int_{\mathbf U''} f(k^{-1}vu''k)\,du''\,dk
\]
is a smooth compactly supported function on~$\V$ and, for each Schwarz-Bruhat function $\varphi$ on~$\V$,
\[
\varphi^{P'}(v)=\int_{\V_{P'}} \varphi(v v')\,dv'
\]
is a Schwarz-Bruhat functions on~$\V^{P'}$.
\end{lemma}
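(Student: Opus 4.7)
The plan is to decompose the outer integration $\int_{Q\bs\G^1}$ via the Iwasawa decomposition $\G=\U\bL\bK$ and to absorb the resulting $\bK$- and $\U''$-integrations into the averaging that defines $f_V$. I would first prove the per-$N'$ identity equating the $N'$-contribution to $J_O^T(f,\lambda)$ with the $N'$-contribution to $Z_O^T(f_V,\lambda)$ for each $N'\subset Q$; the first identity of the lemma then follows by summing over $N'$, and the second by a further Fubini along $\V_{P'}$. All interchanges are licensed by the absolute convergence supplied by Hypotheses~\ref{damp} and~\ref{convZeta}.

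Write $x=ulk$ with $u\in U\bs\U$, $l\in L\bs\bL\cap\G^1$, $k\in\bK$; Iwasawa introduces a Jacobian of the form $e^{-\langle\delta_U,H_L(l)\rangle}$. Since every $\hat\tau_P^T$ with $P\supset Q$ is left-$\U$-invariant and right-$\bK$-invariant, $\chi_{\gamma N'}^T(x)=\chi_{\gamma N'}^T(l)$, and similarly $H_Q(x)=H_L(l)$. After the unit-Jacobian substitution $n'\mapsto un'u^{-1}$, the inner integrand becomes $f(k^{-1}l^{-1}(u^{-1}\gamma u)n'lk)$. The crucial point is that $\gamma\in O\cap Q\can\subset U'$ by Theorem~\ref{canpar}(iii) (since $O$ is unipotent), while the grading gives $[\mathfrak u,\mathfrak u']\subset\mathfrak u''$, whence $u^{-1}\gamma u=\gamma\cdot c(u,\gamma)$ with $c(u,\gamma)\in\U''$. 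Thus the $u$-dependence of the integrand lives entirely in $\U''$.

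The main step is then to reorganise the combined sum over $\gamma N'\in(O\cap U')N'/N'$, integration over $\bN'$, and integration over $U\bs\U$ into a sum over the coarser $(O\cap U')N'/U''N'$ together with two separate integrations: one over $\U''$ (which feeds into $f_V$) and one over $\bN'\U''/\U''$ (which appears in $Z_O^T$). The fibre of the projection $(O\cap U')N'/N'\to(O\cap U')N'/U''N'$ is a $U''/(U''\cap N')$-torsor, and the standard unfolding
\[
\sum_{u''\in U''/(U''\cap N')}\int_{\bN'\cap\U''}\phi(u''m)\,dm=\int_{\U''}\phi(m)\,dm,
\]
compatible with the normalisations $\operatorname{vol}(U''\bs\U'')=\operatorname{vol}(N'\bs\bN')=1$, converts the extra discrete sum together with the $(\bN'\cap\U'')$-piece of the $\bN'$-integration into the required $\U''$-average. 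Combining this $\U''$-integration with $\int_\bK$ assembles $f_V(l^{-1}\nu n'l)$ for $\nu\in(O\cap U')N'/U''N'$ and $n'\in\bN'\U''/\U''$. The Iwasawa factor $\delta_U=\delta_{U/U''}+\delta_{U''}$ loses its $\delta_{U''}$ summand to the conjugation of the $\U''$-variable by $l$, leaving exactly $e^{-\langle\delta_{U/U''},H_L(l)\rangle}$, as in $Z_O^T(f_V,\lambda)$. For the second identity, a final integration in $\V_{P'}=\bN'\U''/\U''$ turns $f_V$ into $f_V^{P'}$, and after conjugating the $\V_{P'}$-variable by $l$ the damping exponent shifts from $\delta_{U/U''}$ to $\delta_{U/N'U''}$, matching $Z_{O,P'}^T(f_V^{P'},\lambda)$.

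The main obstacle is the unfolding identity above with all measures matched: one must verify that the discrete sum over $U''/(U''\cap N')$ combines with the $(\bN'\cap\U'')$-piece of the $\bN'$-integration to reconstitute $\int_{\U''}$ exactly, which rests on $U''(F)$ being an $F$-rational cocompact lattice in $\U''$ together with a compatible position of $\bN'\cap\U''$ under the chosen normalisations. Once this is in hand, Hypotheses~\ref{damp} and~\ref{convZeta} license every Fubini interchange and the remaining algebra is routine.
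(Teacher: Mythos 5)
Your overall strategy (Iwasawa decomposition $\G=\U\bL\bK$, isolating the $u$-dependence of the integrand inside $\U''$, and reassembling the $\bK$- and $\U''$-integrations into $f_V$) is the same as the paper's, and your bookkeeping of the modular characters at the end is correct. But the step you yourself single out as the main obstacle is false as stated. The identity
\[
\sum_{u''\in U''/(U''\cap N')}\int_{\bN'\cap\U''}\phi(u''m)\,dm=\int_{\U''}\phi(m)\,dm
\]
cannot hold: the left-hand side only sees the set $U''(F)\cdot(\bN'\cap\U'')$, a countable union of cosets of the proper closed subgroup $\bN'\cap\U''$, which has Haar measure zero in $\U''$ whenever $N'\cap U''\ne U''$ (in the extreme case $N'\cap U''=\{1\}$ your identity would equate a sum over rational points with an adelic integral). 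No choice of normalisations and no appeal to cocompactness of the lattice $U''(F)$ can repair this: a discrete rational sum can only become a continuous adelic integral by being unfolded against a continuous integration, and at this point of your argument you have set the relevant one aside, namely the integral over $U\bs\U$ coming from the Iwasawa decomposition.

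That unfolding is precisely the paper's mechanism and is the one non-formal input of the proof. By the representation theory of $\mathfrak{sl}_2$, the orbit map $\delta\mapsto\delta^{-1}\nu\delta$ is an isomorphism $U_\nu\bs U\to\nu U''$ and, modulo $N'$, $U_{\nu N'}\bs U\to\nu U''N'/N'$. One first rewrites the rational sum over the fibre, $\sum_{\eta\in U''N'/N'}h(\nu\eta)$, as $\sum_{\delta\in U_{\nu N'}\bs U}h(\delta^{-1}\nu\delta)$; folding this sum into the integral over $U\bs\U$ yields an integral over $\U_{\nu N'}\bs\U$ (the piece $U_{\nu N'}\bs\U_{\nu N'}$ dropping out with volume one), and the adelic version of the same isomorphism converts that into $\int_{\U''\bN'/\bN'}h(\nu u')\,du'$. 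Your observation that $u^{-1}\gamma u=\gamma\,c(u,\gamma)$ with $c(u,\gamma)\in\U''$ is the infinitesimal shadow of this isomorphism, but you then discard the $u$-integral instead of consuming it in the unfolding. Replace your displayed identity by this orbit-map unfolding (and note, as the paper does, that the fibre of $(O\cap U')N'/N'\to(O\cap U')N'/U''N'$ is the full coset $\nu U''N'/N'$ by the $U''$-translation invariance in Theorem~\ref{canpar}(iii)); the rest of your argument, including the final Fubini along $\V_{P'}$ for the second identity, then goes through.
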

\begin{proof}
For each $\gamma\in O\cap U'$, the map $U_\gamma\bs U\to\gamma U''$ given by $\delta\mapsto\delta^{-1}\gamma\delta$ is an isomorphism due to the representation theory of~$\mathfrak{sl}_2$. An isomorphism between affine $F$-varieties induces a bijection beween their sets of $F$-rational points, and the set of $F$-rational points of $U'/U''$ is $U'(F)/U''(F)$. Since $O\cap U(F)$ is normalised by~$U(F)$, all elements of a $U''(F)$-coset in $U'(F)$ belong to the same truncation class. Writing again $U$ for $U(F)$ etc., we get for a finitely supported functions $g$ on~$U'$ and $h$ on~$\nu U''$
\[
\sum_{\gamma\in O\cap U'} g(\gamma)
=\sum_{\nu\in O\cap U'/U''} \sum_{\eta\in U''} g(\nu\eta),\quad
\sum_{\eta\in U''} h(\nu\eta)
=\sum_{\delta\in U_\nu\bs U} h(\delta^{-1}\nu\delta).
\]
This argument also works if we replace $U'$ by $U'/N'$ and $U''$ by its image $U''N'/N'$ in that quotient. It shows that, for $g$ on $V$,
\[
\sum_{\gamma\in(O\cap U')N'/N'} g(\gamma)
=\sum_{\nu\in(O\cap U')N'/U''N'} \sum_{\delta\in U_{\nu N'}\bs U} g(\delta^{-1}\nu\delta).
\]
As a byproduct, we see that the sum in the Lemma is well defined. These identities have adelic versions, too, of which we only need
\[
\int_{\U''\bN'/\bN'} h(\nu u')\,du'
=\int_{\U_{\nu N'}\bs\U} h(u^{-1}\nu u)\,du
\]
for continuous compactly supported functions $h$ on~$\nu\U''\bN'/\bN'$.

By definition, $J_O^T(f,\lambda)$ is given by the expression
\[
\int_{Q\bs\G^1}
e^{-\langle\lambda,H_Q(x)\rangle}
\sum_{N'\subset Q}
\sum_{\gamma\in (O\cap U')N'/N'}
\int_{\bN'}f(x^{-1}\gamma n'x)\,dn'\,
\chi_{\gamma N'}^T(x)\,dx.
\]
Upon applying the identity we have just proved, the inner sum becomes
\[
\sum_{\nu\in(O\cap U')N'/U''N'} \sum_{\delta\in U_{\nu N'}\bs U}
\int_{\bN'}f(x^{-1}\delta^{-1}\nu\delta n'x)\,dn'\,
\chi_{\delta^{-1}\nu\delta N'}^T(x).
\]
Note that
\[
\chi_{\delta^{-1}\nu\delta N'}^T(x)=\chi_{\nu N'}^T(\delta x).
\]
Substituting $\delta n'=n\delta$, decomposing the exterior integral according to~$\G=\U\bL\bK$ and combining the integral over $U\bs\U$ with the sum over~$U_{\nu N'}\bs U$, we get
\begin{multline*}
J_O^T(f,\lambda)=
\int_\bK\int_{L\bs\bL\cap\G^1}
e^{-\langle\lambda,H_L(l)\rangle}
\sum_{N'\subset Q}
\sum_{\nu\in(O\cap U')N'/U''N'} \\
\int_{U_{\nu N'}\bs\U}
\int_{\bN'}f(k^{-1}l^{-1}u^{-1}\nu nulk)\,dn\,
\chi_{\nu N'}^T(ul)\,du\,\delta_U(l^{-1})\,dl\,dk.
\end{multline*}
We split the integral over $U_{\nu N'}\bs\U$ into integrals over $\U_{\nu N'}\bs\U$ and $U_{\nu N'}\bs\U_{\nu N'}$. The latter one drops out, since the integral over $\bN'$ as a function of~$u$ and the function $\chi_{\nu N'}^T$ are left-invariant under $\U_{\nu N'}$, and with the usual normalisation, the measure of $U_{\nu N'}\bs\U_{\nu N'}$ equals~$1$.

For all $P$ containing~$\gamma$, $H_P$ is left $\U$-invariant by Theorem~\ref{canpar}(iv), hence so is $\chi_{\nu N'}^T$ and can be extracted from the integral over~$u$. Substituting $nu=un'$, applying the adelic version of the above identity to
\[
h(\nu u')=\int_{\bN'}f(k^{-1}l^{-1}\nu u'n'lk)\,dn'
\]
and combining the integrals over $u'$ and~$n'$, we obtain
\begin{multline*}
J_O^T(f,\lambda)=
\int_\bK\int_{L\bs\bL\cap\G^1}
e^{-\langle\lambda,H_L(l)\rangle}\delta_U(l^{-1})
\sum_{N'\subset Q}\\
\sum_{\nu\in(O\cap U')N'/U''N'}
\int_{\U''\bN'}f(k^{-1}l^{-1}\nu nlk)\,dn\,\chi_{\nu N'}^T(l)\,dl\,dk.
\end{multline*}
It remains to move the integral over $\bK$ under the sum, to split the integral over $\U''\bN'$ into integrals over $n'\in\bN'\mathbf U''/\mathbf U''$ and~$u''\in\U''$ and to substitute $u''l=lu'$, which produces a factor $\delta_{U''}(l)$. If we treat only the contribution from a fixed group~$P'$, we may substitute $n'l=lv'$ in the integral over $\V_{P'}=\bN'\mathbf U''/\mathbf U''$, which produces a factor $\delta_{N'U''/U''}(l)$ and allows us to express everything in terms of~$\psi=\varphi^{P'}$.
\end{proof}

\section{The principal unipotent contribution}
\label{prin}
\subsection{Reduction to the trivial parabolic}

A final formula can be obtained for the contribution of the principal unipotent conjugacy class in $G$, which we denote by~$G\prin$. Let $\gamma$ be an element. Its canonical parbolic $Q$ is then a minimal parabolic, for which we choose a Levi component~$L$. By Theorem~\ref{canpar}, $G\prin\cap Q\can$ is a dense $Q$-conjugacy class in~$U=U'$. The set $\mathcal P_\gamma\infl$ consists of all parabolics $P$ containing~$Q$. For each of them, $G\prin\cap P$ is dense in $M\prin N$, where $N$ is the unipotent radical of $P$ and $M$ the Levi component containing~$L$. By Theorem~\ref{prehom}, $\gamma N$ is prehomogeneous under the action of~$Q_{\gamma N}\subset P_{\gamma N}$ with generic orbit contained in the $Q$-conjugacy class of~$\gamma$, hence in~$Q\can$. Therefore we can take $N^{M\prin}=N$, and one could even show that this is the only choice satisfying Hypothesis~\ref{NC}.

Thus, the definition given in Hypothesis~\ref{damp} simplifies to
\begin{multline*}
J_{G\prin,P}^T(f,\lambda)
=\int_{Q\bs\G^1}e^{-\langle\lambda,H_Q(x)\rangle}\\
\sum_{\gamma\in (G\prin\cap Q)N/N}\int_\bN f(x^{-1}\gamma n' x)\,dn'\,\hat\tau_P^T(x)\,dx,
\end{multline*}
which depends only on the restriction of $\lambda$ to~$\af_Q^G$. These distributions vor various $P$ can be expressed in terms of the one with $P=G$ (which does not depend on the truncation parameter), in which the ambient group $G$ is replaced by~$M$ (indicated by a superscript~$M$).
\begin{lemma}\label{Jprin}
If Hypothesis~\ref{damp} applies to principal unipotent orbits, then
\[
J_{G\prin,P}^T(f,\lambda)=\epsilon_P
J_{M\prin,M}^M(f^P,\lambda^P)\theta_P^T(\lambda)^{-1},
\]
where
\[
f^P(m)=\int_\bK\int_\bN f(k^{-1}mnk)\,dn\,dk
\]
is a compactly supported smooth function on~$\mathbf M^1$ and, in the notation of section~\ref{GQfam},
$\theta_P^T(\lambda)=
e^{\langle\lambda,T_P\rangle}\theta_P(\lambda)$.
\end{lemma}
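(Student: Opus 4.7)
The strategy is to apply an Iwasawa-type decomposition $\G^1 = \U(\bL\cap\G^1)\bK$ to $J_{G\prin,P}^T(f,\lambda)$, to merge the $n'$- and $u$-integrations into an Iwasawa integral for $M$ using $\U = (\U\cap\mathbf M)\bN$, and finally to split the $\bL$-integration along $\af_L=\af_L^P\oplus\af_P$ so as to isolate a Fourier-type factor that evaluates to $\epsilon_P\theta_P^T(\lambda)^{-1}$.

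First I would identify the summation set. Since $Q$ is a minimal parabolic contained in $P$ and $G\prin=\Ind_M^GM\prin$ by Theorem~\ref{induction2}, reduction modulo $N$ sends $G\prin\cap Q$ onto $M\prin\cap(U\cap M)$, which by Theorem~\ref{canpar}(iii) applied to $M$ is a single $Q_M$-conjugacy class, with $Q_M = Q\cap M$ having unipotent radical $U_M = U\cap M$. Writing $x=ulk$ with $u\in\U$, $l\in\bL\cap\G^1$, $k\in\bK$, one has $H_Q(x)=H_L(l)$ and $\hat\tau_P^T(x)=\hat\tau_P^T(l)$, both because every $F$-rational character of $P$ or $Q$ vanishes on $\U$ (which contains $\bN$ by Theorem~\ref{canpar}(iv)) and on $\bK$. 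The standard Iwasawa formula then rewrites $J_{G\prin,P}^T(f,\lambda)$ as
\[
\int_\bK\int_{L\bs(\bL\cap\G^1)}e^{-\langle\lambda,H_L(l)\rangle}\hat\tau_P^T(l)\delta_U(l)^{-1}\int_{U\bs\U}\sum_\gamma\int_\bN f(k^{-1}l^{-1}u^{-1}\gamma n'ulk)\,dn'\,du\,dl\,dk.
\]

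Next I would collapse the combined $n'$- and $u$-integrations. Using the semidirect decomposition $\U=(\U\cap\mathbf M)\bN$, write $u=u_Mn_0$. A direct computation in the nilpotent group $\U$ shows that $u^{-1}\gamma n'u=(u_M^{-1}\gamma u_M)\,n''$, where $n''\in\bN$ depends affinely on $(n_0,n')$ with unit Jacobian and the resulting integrand is independent of $n_0$; the $n_0$-integral therefore contributes $\operatorname{vol}(N\bs\bN)=1$. Pushing $l$ past $n''$ via $n'''=l^{-1}n''l$ then produces the Jacobian $\delta_N(l)$, which combines with the existing $\delta_U(l)^{-1}$ to give $\delta_{U_M}(l)^{-1}$, since $U=U_MN$.

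Third, I would separate the $\af_P^G$-part of the $l$-integration. Under $\af_L=\af_L^P\oplus\af_P$ and the corresponding decomposition $\lambda=\lambda^P+\lambda_P$, the factor $e^{-\langle\lambda,H_L(l)\rangle}\hat\tau_P^T(l)$ splits as $e^{-\langle\lambda^P,H_L^P(l)\rangle}\cdot e^{-\langle\lambda_P,H_P(l)\rangle}\hat\tau_P^T(l)$. After the previous step the remaining integrand depends on $l$ only through its image in $\bL\cap\mathbf M^1$, so the $\af_P^G$-variable separates, and its integral evaluates, after a shift by $T_P$, to
\[
\int_{\af_P^G}e^{-\langle\lambda_P,H\rangle}\hat\tau_P^T(H)\,dH=\epsilon_P\,e^{-\langle\lambda_P,T_P\rangle}\theta_P(\lambda_P)^{-1}=\epsilon_P\,\theta_P^T(\lambda)^{-1}
\]
by the formulas for $\theta_P$ and $\hat\tau_P^T$ recalled in section~\ref{GQfam}.

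Finally, the leftover integral over $\bK$, $L\bs(\bL\cap\mathbf M^1)$ and $U_M\bs(\U\cap\mathbf M)$ is reassembled into $J_{M\prin,M}^M(f^P,\lambda^P)$: the $\bN$- and $\bK$-integrations package into the function $f^P$, and the Iwasawa decomposition $\mathbf M^1=(\U\cap\mathbf M)(\bL\cap\mathbf M^1)(\bK\cap\mathbf M)$ (with $\operatorname{vol}(\bK\cap\mathbf M)=1$) re-glues the remaining pieces, noting that for $M$ in place of $G$ and $P'=M$ the definition in Hypothesis~\ref{damp} reduces to a plain sum over $M\prin\cap Q_M$ with $\hat\tau_M^T=1$ and trivial $N'$. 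Combined with the factor $\epsilon_P\theta_P^T(\lambda)^{-1}$ this yields the claimed identity. The principal obstacle will be the bookkeeping: tracking the modular characters $\delta_U$, $\delta_N$, $\delta_{U_M}$ through the successive substitutions, verifying the unit Jacobian of $n'\mapsto n''$, and ensuring consistency of the Haar-measure normalisations on $\bK$, $\bK\cap\mathbf M$, $\bN$, $\U$ and on the Euclidean space $\af_P^G$.
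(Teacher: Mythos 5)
Your argument is correct and takes essentially the same route as the paper's: there the Iwasawa decomposition $\G=\bN\mathbf M\bK$ relative to $P$ is applied directly, so that the $N\bs\bN$-integral drops out, the substitution $n'm=mn$ cancels $\delta_N(m)^{-1}$, and the integral over $\mathbf M^1\bs(\mathbf M\cap\G^1)\cong\af_P^G$ separates to give $\epsilon_P\int_{\af_P^G}e^{-\langle\lambda,H\rangle}\hat\tau_P(H-T_P)\,dH=\epsilon_P\theta_P^T(\lambda)^{-1}$. Your finer decomposition relative to $Q$, followed by re-gluing $\U_M$, $\bL\cap\mathbf M^1$ and $\bK\cap\mathbf M$ back into $\mathbf M^1$, is only a cosmetic variation of the same computation.
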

\begin{proof}
The natural map $M\prin\cap Q\to(G\prin\cap Q)N/N$ is a bijection, and with the usual integration formula for the decomposition $\G=\bN\mathbf M\bK$, the above expression can be written as
\begin{multline*}
\int_\bK\int_{M\cap Q\bs\mathbf M\cap\G^1}\int_{N\bs\bN}
e^{-\langle\lambda,H_{M\cap Q}(m)\rangle}\\
\sum_{\gamma\in M\prin\cap Q}\int_\bN
f(k^{-1}m^{-1}n^{-1}\gamma n'nmk)\,dn'\,\hat\tau_P^T(m)\,dn\,\delta_N(m)^{-1}dm\,dk,
\end{multline*}
where the integral over $N\bs\bN$ drops out. Now we substitute $n'm=mn$, thereby cancelling the factor $\delta_N(m)$, and split the integral over $M\cap Q\bs\mathbf M\cap\G^1$ into integrals over $\mathbf M^1\bs\mathbf M\cap\G^1\cong\af_M^G=\af_P^G$ and $M\cap Q\bs\mathbf M^1$. Since the elements $\gamma\in M$ act trivially on $\af_M^G$, we get
\begin{multline*}
\int_\bK\int_{M\cap Q\bs\mathbf M^1}
e^{-\langle\lambda,H_{M\cap Q}(m)\rangle}
\sum_{\gamma\in M\prin\cap Q}\int_\bN
f(k^{-1}m^{-1}\gamma mnk)\,dn\,dm\,dk\\
\epsilon_P\int_{\af_P^G}
e^{-\langle\lambda,H\rangle}\hat\tau_P(H-T_P)\, dH,
\end{multline*}
where we have used that $\hat\tau_P^T(x)=\epsilon_P\hat\tau_P(H_P^T(x))$.
\end{proof}

\subsection{Singularities of zeta integrals}

Under Hypotheses \ref{damp} and~\ref{convZeta}, Lemma~\ref{JasPI} shows that the distributions on both sides of the equality
\[
J_{G\prin}^T(f,\lambda)=\sum_{P\supset Q}J_{G\prin,P}^T(f^P,\lambda)
\]
as well as in Lemma~\ref{Jprin} can be expressed in terms of zeta integrals. Since the two possible interpretations of $f_V^P$ coincide, there will be parallel formulas for zeta integrals. We prove them unconditionally.
\begin{lemma}\label{singZ}
For every Schwarz-Bruhat function $\varphi$ on~$\V$ and $\lambda\in(\af_Q^*)^+$, we have
\begin{align*}
Z_{G\prin}^T(\varphi,\lambda)&=\sum_{P\supset Q}Z_{G\prin,P}^T(\varphi^P,\lambda),\\
Z_{G\prin,P}^T(\varphi,\lambda)&=\epsilon_P
Z_{M\prin,M}^M(\varphi^P,\lambda^P)\theta_P^T(\lambda)^{-1}.
\end{align*}
These functions are defined by convergent integral-sums for $\Re\lambda\in(\af_Q^*)^+$ and extend meromorphically to all $\lambda$ with $\Re\lambda$ in a neighbourhood of zero. The function $Z_{G\prin}^T(\varphi,\lambda)$ is holomorphic there.
\end{lemma}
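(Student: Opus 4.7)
The plan is to prove the two displayed identities by unwinding definitions in analogy with Lemmas~\ref{JasPI} and~\ref{Jprin}, then to deduce absolute convergence and meromorphic continuation from Theorem~\ref{zetaInt}, and finally to establish holomorphy at $\lambda=0$ via the $(G,Q)$-family machinery of section~\ref{GQfam}.

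For the first identity, I would use the fact recalled at the start of this section that for the principal class every $\gamma\in G\prin\cap Q\can$ satisfies $\mathcal P_\gamma\infl=\{P:P\supset Q\}$ with $N^{[\gamma]}$ equal to the unipotent radical $N$ of the witnessing parabolic $P$. The sum over $N'\subset Q$ in $Z_{G\prin}^T(\varphi,\lambda)$ then reindexes as a sum over $P\supset Q$ with $N'=N$ and $\chi_{\nu N}^T=\hat\tau_P^T$. In the $P$-term, the substitution $n''=l^{-1}n'l$ in the integral over $\mathbf V_P$ contributes a Jacobian $\delta_{V_P}(l^{-1})$ which combines with $e^{-\langle\lambda+\delta_V,H_L(l)\rangle}$ to give $e^{-\langle\lambda+\delta_{V^P},H_L(l)\rangle}$ (using $\delta_V=\delta_{V_P}+\delta_{V^P}$), while the integrand becomes the fiber integral $\varphi^P(l^{-1}\nu l)$; this reproduces $Z_{G\prin,P}^T(\varphi^P,\lambda)$ of Hypothesis~\ref{convZeta}. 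For the second identity I would proceed as in Lemma~\ref{Jprin}: decompose $\bL\cap\G^1\cong(\bL\cap\mathbf M^1)\times\af_P^G$ via $H_P$; observe that the central torus $A_M$ acts trivially on $V^P\cong V^M=U_M/U_M''$, so $\delta_{V^P}$ vanishes on $\af_P^G$, the sum $\sum_\nu\varphi(l^{-1}\nu l)$ is independent of the $\af_P^G$-factor, and $\chi_{\nu N}^T=\hat\tau_P^T$ depends only on $H_P$; the $(\bL\cap\mathbf M^1)$-integral then equals $Z_{M\prin,M}^M(\varphi^P,\lambda^P)$, with $\varphi^P$ transferred to $V^M$ through the canonical $L$-isomorphism $V^P\cong V^M$, and the $\af_P^G$-integral evaluates to $\epsilon_P\theta_P^T(\lambda)^{-1}$ by the Fourier identity recorded in section~\ref{GQfam}.

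Absolute convergence and meromorphic continuation then follow from Theorem~\ref{zetaInt}(i) applied in $M$ to the $F$-regular prehomogeneous vector space $V^M$ (cf.\ Theorems~\ref{canpar}(ii) and~\ref{prehom}): for $\Re\lambda^P$ in the positive chamber of $(\af_Q^P)^*$ the integral $Z_{M\prin,M}^M(\varphi^P,\lambda^P)$ converges absolutely and extends meromorphically near zero, and combining with the rational factor $\theta_P^T(\lambda)^{-1}$ transfers these properties to each $Z_{G\prin,P}^T$ and to the finite sum $Z_{G\prin}^T$.

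The crux is holomorphy of $Z_{G\prin}^T(\varphi,\lambda)$ at $\lambda=0$. Setting
\[
c_P(\lambda)=Z_{M\prin,M}^M(\varphi^P,\lambda^P)\,e^{-\langle\lambda,T_P\rangle}\hat\theta_Q^P(\lambda),
\]
the two identities combine (using $\epsilon_P=\epsilon_Q\epsilon_Q^P$) into
\[
Z_{G\prin}^T(\varphi,\lambda)=\epsilon_Q\sum_{P\supset Q}\epsilon_Q^Pc_P(\lambda)\hat\theta_Q^P(\lambda)^{-1}\theta_P(\lambda)^{-1},
\]
so Lemma~\ref{GQ} will deliver the desired holomorphy once $\{c_P\}$ is identified as a $(G,Q)$-family holomorphic near zero. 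Regularity of each individual $c_P$ at zero is straightforward because, by Theorem~\ref{zetaInt}(i), the simple poles of $Z_{M\prin,M}^M(\varphi^P,\lambda^P)$ near zero occur precisely along the hyperplanes at which $\hat\theta_Q^P$ vanishes. The main obstacle I expect is the $(G,Q)$-family compatibility $c_P=c_{P'}$ on $\lambda|_{\af_P^{P'}}=0$ for $P\subset P'$: it reduces to identifying the residue of $Z_{M'\prin,M'}^{M'}(\varphi^{P'},\lambda^{P'})$ along that hyperplane with $Z_{M\prin,M}^M(\varphi^P,\lambda^P)$, up to the linear factors absorbed in $\hat\theta_Q^{P'}/\hat\theta_Q^P$. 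This is exactly what Theorem~\ref{zetaInt}(ii) provides, applied to the $L$-equivariant splitting $V^{M'}\cong V^{P'}_P\oplus V^M$ (both sides agreeing with $\bigoplus_{\alpha\in\Delta_Q^{P'}}\g_\alpha$), once one matches modular characters and uses the truncation-compatibility $T_{P'}=T_P|_{\af_{P'}}$. Executing this residue calculation with correct signs and normalizations will be the technical heart of the argument.
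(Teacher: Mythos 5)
Your proposal follows essentially the same route as the paper: both identities are obtained by redoing the manipulations of Lemmas~\ref{JasPI} and~\ref{Jprin} (which are unconditional here), convergence and continuation come from Theorem~\ref{zetaInt}(i), and holomorphy at $\lambda=0$ is established by showing that $c_P(\lambda)=e^{-\langle\lambda,T_P\rangle}Z_{M\prin,M}^M(\varphi^P,\lambda^P)\hat\theta_Q^P(\lambda)$ is a $(G,Q)$-family — exactly the paper's $\tilde Z$-modification — with the compatibility condition supplied by the residue formula of Theorem~\ref{zetaInt}(ii), after which Lemma~\ref{GQ} applies. The ``technical heart'' you defer (matching residues, signs and normalisations across the splitting $V=V_P\oplus V^P$) is precisely the step the paper carries out via the limit formula $\lim_{\lambda\to\lambda_0}\tilde Z_{G\prin,G}(\varphi,\lambda)=\tilde Z_{M\prin,M}^M(\varphi^P,\lambda_0)$, so your plan is sound and correctly identifies where the work lies.
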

\begin{proof}
The space $V$ is the direct sum of the prehomogeneous vector spaces $V^P$ corresponding to the minimal parabolics $P$ properly containing~$Q$. For each such~$P$, the space $V^P$ is the isomorphic image, under the exponential map, of a root space for a fundamental root $\alpha$ of the maximal split torus in $L$, and we get a bijection between the set of $F$-irreducible summands of $V$ and the set $\Delta_Q$. The group $L$ is $F$-anisotropic modulo centre, and its basic characters corresponding to the relative invariants of $V$, when restricted to the maximal split torus in the centre of~$L$, are nothing but the elements of $\Delta_Q$. This prompts us to write $\lambda$ as a linear combination of those fundamental roots with certain coefficients~$s_\alpha$. These coefficients are then the values of $\lambda$ on the elements of the dual basis, viz.\ the fundamental coweights~$\check\varpi$.

The distribution $Z_{G\prin,G}(\varphi,\lambda)$ is a zeta integral without truncation on the prehomogeneous vector space $V$. By Theorem~\ref{zetaInt}, it converges absolutely if $\langle\Re\lambda,\check\varpi\rangle>0$ for all $\varpi\in\hat\Delta_Q$ and extends meromorphically to the whole space. Its singularities for $\Re\lambda$ in some neighbourhood of zero are at most simple poles along the hyperplanes where $\langle\lambda,\check\varpi\rangle=0$ for some~$\varpi\in\hat\Delta_Q$, and the multiple residue at any point $\lambda_0$ in that neighbourhood is described as follows. If $P$ is the smallest parabolic containing~$Q$ such that $\lambda_0$ vanishes on~$\af_P^G$ (i.~e., the singular hyperplanes containing~$\lambda_0$ are indexed by~$\hat\Delta_P$), then
\[
\lim_{\lambda\to\lambda_0}
Z_{G\prin,G}(\varphi,\lambda)
\hat\theta_P(\lambda)
=Z_{M\prin,M}^M(\varphi^P,\lambda_0).
\]
An argument as in the proof of Lemma~\ref{Jprin} shows the second asserted identity, which provides the convergence and meromorphic continuation of its left-hand side. The manipulations at the end of the proof of Lemma~\ref{JasPI} are now valid unconditionally, thus proving the first identity for $\lambda$ in the domain of convergence and hence for the meromorphically continued functions.

The theory of $(G,Q)$-families cannot be applied to meromorphic functions. One may remove the singularities of the zeta integral at $\lambda=0$ either by multiplying with linear functions or by subtracting the principal part. The first method leads to the modified distribution
\[
\tilde Z_{G\prin,G}(\varphi,\lambda)=
Z_{G\prin,G}(\varphi,\lambda)
\hat\theta_Q(\lambda)
\]
and its analogues for Levi subgroups. Since the elements of the dual basis of $\Delta_Q^P$ are the projections of the $\check\varpi$ with $\varpi\in\hat\Delta_Q\setminus\hat\Delta_P$ onto~$\af_Q^P$, it follows that 
\[
\lim_{\lambda\to\lambda_0}
\tilde Z_{G\prin,G}(\varphi,\lambda)
=\tilde Z_{M\prin,M}^M(\varphi^P,\lambda_0).
\]
This shows that the functions $c_P(\lambda)=e^{-\langle\lambda,T_P\rangle}\tilde Z_{M\prin,M}^M(\varphi^P,\lambda^P)$ make up a $(G,Q)$-family, which is a product of a frugal and a cofrugal one. We can rewrite our formula for the principal unipotent contribution tautologically as
\[
Z_{G\prin}^T(\varphi,\lambda)=\sum_{P\supset Q}
\epsilon_P
\tilde Z_{M\prin,M}^M(\varphi^P,\lambda^P)
\hat\theta_Q^P(\lambda)^{-1}
\theta_P^T(\lambda)^{-1}.
\]
Now the regularity of the right-hand side for $\Re\lambda$ in a neighbourhood of zero follows from Lemma~\ref{GQ}.
\end{proof}

Let us discuss the second method of removing singularities that was mentioned in the proof. Note that the principal part of a meromorphic function on a complex space is not invariantly defined. Thus, we exploit the $L$-invariant splitting $V=V_P\oplus V_R$ of our prehomogeneous vector space valid for each pair of parabolics $P$ and $R$ containing~$Q$ for which $\Delta_Q$ is the disjoint union of $\Delta_Q^P$ and~$\Delta_Q^R$. Although the image of $L$ in $\operatorname{Aut}(V)$ need not split accordingly, that of its centre does, leading to the decomposition $\af_Q^G=\af_P^G\oplus\af_R^G$. Since $R$ is determined by $P$ and~$Q$, we denote $\lambda_R$ by $\lambda^{P/Q}$, which may serve as an argument for~$Z_{M\prin,M}^M$, because both $\af_R^G$ and $\af_Q^P$ are canonically isomorphic to~$\af_Q/\af_P$. (This approach is dual to the one applied in the proof of Lemma~6.1 in~\cite{A-inv}.) We define the second modified distribution as
\[
\tilde Z_{G\prin}(\varphi,\lambda)
=\sum_{P\supset Q}\epsilon_P
Z_{M\prin,M}^M(\varphi^P,\lambda^{P/Q})
\hat\theta_P(\lambda)^{-1}
\]
(without the additional subscript $G$), which is also holomorphic for $\Re\lambda$ a neighbourhood of zero, because the poles along each singular hyperplane cancel. We have a version of this distribution for the Levi component $M'$ of every parabolic $P'\supset Q$ in the role of~$G$, and by induction we can easily prove the converse relation
\[
Z_{G\prin,G}
(\varphi,\lambda)
=\sum_{P'\supset Q}
\tilde Z_{M^{\prime\text{prin}}}^{M'}
(\varphi^{P'},\lambda^{P'/Q})
\hat\theta_{P'}(\lambda)^{-1}.
\]
Plugging its relative version into the formula for $Z_{G\prin,P}^T(\varphi,\lambda)$ and summing over~$P$, we obtain after a change of summation a second formula
\[
Z_{G\prin}^T(\varphi,\lambda)=\sum_{P'\supset Q}
\sum_{P\supset P'}
\epsilon_P
\tilde Z_{M^{\prime\text{prin}}}^{M'}
(\varphi^{P'},(\lambda^P)^{M\cap P'/M\cap Q})
\hat\theta_{P'}^P(\lambda)^{-1}
\theta_P^T(\lambda)^{-1}.
\]
The functions $c_P(\lambda_{P'})=e^{-\langle\lambda,T_P\rangle}\tilde Z_{M^{\prime\text{prin}}}^{M'}
(\varphi^{P'},(\lambda^P)^{M\cap P'/M\cap Q})$, for fixed $P'$ and~$\lambda^{P'}$, constitute a $(G,P')$-family, hence the inner sum is holomorphic in~$\lambda_{P'}$ by Lemma~\ref{GQ}. It is actually holomorphic in~$\lambda$, because the family depends holomorphically on~$\lambda^{P'}$ in the obvious sense. For $P'=G$, it reduces to $\tilde Z_{G\prin}(\varphi,\lambda)$, while the contribution of $P'=Q$ converges to
\[
Z_{L\prin}^L(\varphi^Q)\int_{\af_Q^G}\Gamma_Q'(H,T_Q)\,dH
\]
as $\lambda\to0$ by Lemma~2.2 of~\cite{A-inv}.

\subsection{Explicit weight factors}

Now let $f\in C_c^\infty(G(F_S)^1)$ for a finite set $S$ of places. As in subsection~\ref{dampsection}, we have the expansion
\[
J_{G\prin,P}^T(f,\lambda)=
\sum_{[\gamma' N]_S}\zeta_P(S,\gamma',\lambda)J_P^T(\gamma' N,f,\lambda)
\]
for any $P\supset Q$, where $[\gamma' N]_S$ runs through the $Q(F_S)$-conjugacy classes in~$(G\prin(F_S)\cap Q(F_S))N(F_S)/N(F_S)$.
Here $\zeta_P(S,\gamma',\lambda)$ is a certain zeta function associated to the prehomogeneous vector space~$V^P$, and the weighted orbital integral $J_P^T(\gamma' N,f,\lambda)$ is given by
\[
\int_{Q^{\gamma' N}(F_S)^1\bs G(F_S)^1}
e^{-\langle\lambda,H_Q(x)\rangle}\int_{N(F_S)}f(x^{-1}\gamma' n x)\,dn\,\hat\tau_P^T(x)\,dx.
\]
As in the proof of Lemma~\ref{Jprin}, we see that
\[
J_P^T(\gamma' N,f,\lambda)=\epsilon_P
J_M^M(\gamma' N,f^P,\lambda^P)
\theta_P^T(\lambda)^{-1},
\]
where the superscript $M$ indicates the analogue of the distribution for $M$ in place of~$G$. The latter is holomorphic, hence the zeta function is responsible for the remaining singularities of the product. We remove them by setting
\[
\tilde\zeta_P(S,\gamma',\lambda)=\zeta_P(S,\gamma',\lambda)\hat\theta_Q^P(\lambda).
\]
The preimage in $G(F_S)\prin\cap Q(F_S)$ of a $Q(F_S)$-orbit $[\gamma' N]_S$ consists of several $Q(F_S)$-orbits. Using the isomorphism $N_\gamma\bs N\times N_\gamma\to\gamma N$ as in the proof of Lemma~\ref{JasPI}, we get
\[
J_M^M(\gamma'N,f^P,\lambda^P)=\sum_{[\gamma]_S:[\gamma N]_S=[\gamma' N]_S}J_P(\gamma,f,\lambda),
\]
where the functions
\[
J_P(\gamma,f,\lambda)=
\int_{G^\gamma(F_S)^1\bs G(F_S)^1}
e^{-\langle\lambda,H_Q^P(x)\rangle}
f(x^{-1}\gamma x)\,dx
\]
form a cofrugal $(G,Q)$-family because the weight factors do. In total, we obtain
\[
J_{G\prin,P}^T(f,\lambda)=
\epsilon_P\sum_{[\gamma]_S}
\tilde\zeta_P(S,\gamma,\lambda)J_P(\gamma,f,\lambda)
\hat\theta_Q^P(\lambda)^{-1}\theta_P^T(\lambda)^{-1}.
\]
By Lemmas \ref{Jprin} and~\ref{JasPI}, we have
\[
\sum_{[\gamma]_S}
\tilde\zeta_P(S,\gamma,\lambda)J_P(\gamma,f,\lambda)
=\tilde Z_{M\prin,M}^M(f_V^P,\lambda^P).
\]
We have seen in the proof of Lemma~\ref{singZ} that the zeta integrals with removed singularities on the right-hand side form a cofrugal $(G,Q)$-family, and we deduce the same property for the functions $\tilde\zeta_P(S,\gamma,\lambda)$ with fixed $S$ and~$\gamma$ by choosing $f$ supported in the $G(F_S)$-conjugacy class of~$\gamma$.

Summing the above formulas for $J_{G\prin,P}^T(f,\lambda)$ over~$P\supset Q$ and applying Lemma~\ref{prodGQ} with $d_P(\lambda)=e^{-\langle\lambda,T_P\rangle}J_P(\gamma,f,\lambda)$,
we obtain
\[
J_{G\prin}^T(f)=\epsilon_Q\sum_{[\gamma]_S}\sum_{P\supset Q}(\tilde\zeta_Q^P)'(S,\gamma,0)J_P^T(\gamma,f),
\]
where $(\tilde\zeta_Q^P)'(S,\gamma,\lambda)$ is as in Lemma~\ref{GQ} and
\[
J_P^T(\gamma,f)=\int_{G^\gamma(F_S)^1\bs G(F_S)^1}f(x^{-1}\gamma x)w_P^T(x)\,dx
\]
with the weight factor
\[
w_P^T(x)=\lim_{\lambda\to0}\sum_{P'\supset P}\epsilon_P^{P'}
e^{-\langle\lambda,H_P^{P'}(x)+T_{P'}\rangle}
\hat\theta_P^{P'}(\lambda)^{-1}\theta_{P'}(\lambda)^{-1}.
\]
Applying Lemma~\ref{prodGQ} again, we get
\[
w_P^T(x)=\sum_{P'\supset P}\epsilon_P^{P'}
v_P^{P'}(H_P^{P'}(x))v_{P'}(T_{P'})
\]
in terms of the relative versions of the function
\[
v_Q(X)=\int_{\af_Q^G}\Gamma'_Q(H,X)\,dH=\epsilon_Q\int_{\af_Q^G}\Gamma''_Q(H,X)\,dH,
\]
where the equality of the integrals follows from Lemma~\ref{G'}(iii). Since $v_Q(X)=\epsilon_Q v_Q(-X)$, we can also write
\[
w_P^T(x)=\sum_{P'\supset P}
v_P^{P'}(-H_P^{P'}(x))v_{P'}(T_{P'})
=v_P(T_P-H_P(x)),
\]
where the last equality follows with Lemma~\ref{prodGQ}.

There is an alternative formula. The $(M,Q\cap M)$-family giving rise to $(\tilde\zeta_Q^P)'(S,\gamma,\lambda)$ depends only on~$\gamma N(F_S)$, so we can write
\[
J_{G\prin}^T(f)=\epsilon_Q\sum_{P\supset Q}\sum_{[\gamma' N]_S}
(\tilde\zeta_Q^P)'(S,\gamma',0)J_P^T(\gamma' N,f),
\]
where $J_P^T(\gamma' N,f)$ is the sum of the $J_P^T(\gamma,f)$ over all $[\gamma]_S$ with $[\gamma N]_S=[\gamma'N]_S$. Recombining the integrals, we get
\[
J_P^T(\gamma'N,f)=
\epsilon_P\int_{L^{\gamma'N}(F_S)\bs L(F_S)}
f^P(l^{-1}\gamma' l)
v_P(H_P^T(l))\,dl.
\]

\section{Examples}

We are going to illustrate the constructions of this paper by some examples, restricting ourselves to subregular unipotent conjugacy clas\-ses in low-dimensional split classical groups.  Such a group $G$ is up to isogeny either the group $\GL(V)$, where $V$ is an $F$-vector space, or the subgroup stabilising a bilinear form $b$ or a symplectic form $\omega$ on~$V$. Parabolic subgroups are stabilisers of flags $V_0\subset\dots\subset V_r$ in~$V$, which have to be self-dual in the orthogonal and symplectic cases, i.~e., $V_i^\perp=V_{r-i}$ for each~$i$. To every conjugacy class of unipotent elements~$\gamma=\exp X$ or, equivalently, to every adjoint orbit of nilpotent elements~$X$, one associates a partition of the natural number~$\dim V$ (cf.\ section~5.1 of~\cite{CG}). We avoid the orthogonal case, in which both assignments are not quite bijective. Although in the notation $\{V_0,\dots,V_r\}$ for a flag one ought include $V_0=\{0\}$ and $V_r=V$, we will list only nonzero proper subspaces for brevity, so that $G$, considered as its own parabolic subgroup, appears as the stabiliser of the empty flag.

For each representative~$\gamma$, we will present the canonical flag determining the canonical parabolic~$Q$ of~$\gamma$, the corresponding prehomogeneous vector space defined in Theorem~\ref{canpar} and applied in section~\ref{redtoPV}, its basic relative invariants as in Theorem~\ref{zetaInt} and the split torus $A_{L_\nu}/A_G$, as mentioned after Hypothesis~\ref{convZeta}, by means of its faithful action on a subquotient of a suitable flag. We will also describe the poset $\mathcal P_\gamma\infl(F)$ defined in Theorem~\ref{induction2}(iv) and applied in Corollary~\ref{newJCcor}. If applicable, we will indicate the splitting of $C(F)$ into truncation classes defined in section~\ref{trclass} and the refined set~$\mathcal P_\gamma^{\textrm{min\,infl}}(F)$. For each parabolic $P=MN$ in this set, we will give the group $N^{[\gamma]}$ defined in Hypothesis~\ref{NC}, whose present notation was introduced in Lemma~\ref{newJC}.

\subsection{General linear group of rank~2}

Here $G(F)=\GL(V)$ with $\dim V=3$, and the subregular unipotent class  corresponds to the partition~$[2,1]$. The canonical flag of a representative $\gamma=\exp X$ is $\{V_-,V_+\}$, where
\[
V_-=\Im X,\qquad V_+=\Ker X,
\]
and $X$ defines an isomorphism $V/V_+\to V_-$. The Hasse diagram of these subspaces is shown in Figure~1.1. The corresponding prehomogeneous vector space is~$\Hom(V/V_+,V_-)$ with any nonzero linear function as basic relative invariant, and $A_{L_\nu}/A_G$ acts on $V_+/V_-$ by homotheties. The Hasse diagram of the parabolic subgroups $P$ in $\mathcal P_\gamma\infl(F)$, or rather their corresponding flags, is shown in Figure~1.2.
\begin{figure}[ht]\centering
\noindent\begin{minipage}[b]{0.2\textwidth}
\centering
$\xymatrix@R=1ex@C=0ex{
V\ar@{-}[d]\\
V_+\ar@{-}[d]\\
V_-\ar@{-}[d]\\
\{0\}
}$\medskip\par
Figure~1.1
\end{minipage}
\begin{minipage}[b]{0.4\textwidth}
\centering
$\xymatrix@=3ex{
&\emptyset\ar@{<-}[dl]\ar@{<-}[dr]&\\
\{V_-\}&&\{V_+\}
}$\vspace{4ex}\par
Figure~1.2
\end{minipage}
\end{figure}

For each such~$P$ with unipotent radical~$N$, the related group~$N'=N^{[\gamma]}$ is the unipotent radical of a parabolic~$P'$.  Here and below, we encode the assignment $P\mapsto P'$ in the Hasse diagram by an arrow between the corresponding flags. If no arrow starts at a flag, this means that we have $P'=P$ for the corresponding parabolic.

\subsection{Symplectic group of rank~2}
\label{Sp2}

Here $G(F)=\Sp(V,\omega)$ with $\dim V=4$, and the subregular unipotent class corresponds to the partition~$[2,2]$. The canonical flag of a representative $\gamma=\exp X$ is $\{V_0\}$, where
\[
V_0=\Ker X=\Im X.
\]
The element $X$ induces an isomorphism $V/V_0\to V_0$ and defines symmetric bilinear forms $b_+$ on $V/V_0$ and $b_-$ on~$V_0$ by
\[
b_+(u,v)=\omega(u,Xv)=b_-(Xu,Xv).
\]
If $b_+$ or, equivalently, $b_-$ splits over $F$ into a product of two linear forms, then there are isotropic lines $U_+/V_0$, $W_+/V_0$ for $b_+$ and $U_-$, $W_-$ for $b_-$. In this case $X$ determines four additional $F$-subspaces with the properties
\[
XU_+=U_+^\perp=U_-,\qquad XW_+=W_+^\perp=W_-.
\]
The Hasse diagram of these subspaces is shown in Figure~2.1 with the parts shaded that are only present in the split case. The corresponding prehomogeneous vector space is the space $\operatorname{Quad}(V_0)$ of quadratic forms on~$V_0$ with the discriminant as basic relative invariant. The torus $A_{L_\nu}/A_G$ acts as the split special orthogonal group on $V/V_0\cong V_0$ if $b_\pm$ is split, while it is trivial otherwise.
\begin{figure}[ht]\centering
\noindent\begin{minipage}[b]{0.25\textwidth}
\centering
$\xymatrix@R=1ex@C=0ex{
&V\ar@{-}[dd]&\\
\color{gray}U_+\ar@{.}[ur]\ar@{.}[dr]
&&\color{gray}W_+\ar@{.}[ul]\ar@{.}[dl]\\
&V_0\ar@{-}[dd]&\\
\color{gray}U_-\ar@{.}[ur]\ar@{.}[dr]
&&\color{gray}W_-\ar@{.}[ul]\ar@{.}[dl]\\
&\{0\}&
}$\medskip\par
Figure~2.1
\end{minipage}
\begin{minipage}[b]{0.15\textwidth}
\centering
$\xymatrix@R=4ex@C=1ex{
\emptyset\\
\{V_0\}\ar@{-}[u]
}$\vspace{5ex}\par
Figure~2.2
\end{minipage}
\begin{minipage}[b]{0.4\textwidth}
\centering
$\xymatrix@R=4ex@C=1ex{
&\emptyset&\\
\{U_-,U_+\}\ar@{->}[ur]&
&\{W_-,W_+\}\ar@{->}[ul]
}$\vspace{5ex}\par
Figure~$2.2'$
\end{minipage}
\end{figure}

The class $C(F)$ splits into two truncation classes $O$ and~$O'$ containing the elements for which the forms $b_\pm$ are anisotropic resp.~split. The Hasse diagram of $\mathcal P_\gamma^{\textrm{min\,infl}}(F)$ for $\gamma$ in $O$ resp.~$O'$ is shown in figures~2.2 resp.~$2.2'$ with the same encoding of the assignment $P\mapsto P'$ as above.

In this case, Lemma~\ref{JasPI} is true unconditionally, see \cite{HoWa} for details. The sum $Z_C^T(\varphi,\lambda)=Z_O^T(\varphi,\lambda)+Z_{O'}^T(\varphi,\lambda)$ of zeta integrals was called ``adjusted zeta function'' in~\cite{Yu}.

\subsection{General linear group of rank~3}

Here $G(F)=\GL(V)$ with $\dim V=4$, and the subregular unipotent class corresponds to the partition~$[3,1]$. The canonical flag of a representative $\gamma=\exp X$ is $\{V_-,V_+\}$, where
\begin{align*}
V_-&=\Ker X\cap\Im X=\Im X^2,\\
V_+&=\Ker X+\Im X=\Ker X^2.
\end{align*}
The corresponding prehomogeneous vector space is
\[
\Hom(V/V_+,V_+/V_-)\times\Hom(V_+/V_-,V_-),
\]
and the value of the basic relative invariant on $\nu=(\nu_1,\nu_2)$ in this space is the composition $\nu_2\circ\nu_1\in\Hom(V/V_+,V_-)$. The torus $A_{L_\nu}/A_G$ acts by homotheties on~$\Ker\nu_2$ stabilising~$\Im\nu_1$. Figure~2.1 shows the Hasse diagram of the pertinent subspaces together with $\Ker X$ and $\Im X$, whose stabilisers also belong to the set~$\mathcal P_\gamma\infl(F)$.
\begin{figure}[ht]\centering
\noindent\begin{minipage}[b]{0.25\textwidth}
\centering
$\xymatrix@R=1ex@C=0ex{
&V\ar@{-}[d]&\\
&V_+\ar@{-}[dl]\ar@{-}[dr]&\\
\Ker X\ar@{-}[dr]&&\Im X\ar@{-}[dl]\\
&V_-\ar@{-}[d]&\\
&0&
}$\medskip\par
Figure~3.1
\end{minipage}
\begin{minipage}[b]{0.55\textwidth}
\centering
$\xymatrix@!C=0.2em@!R=0.3em{
&&&\emptyset&&&\\
\{\Im X\}\ar[urrr]&&\{V_-\}\ar@{-}[ur]&
&\{V_+\}\ar@{-}[ul]&&\{\Ker X\}\ar[ulll]\\
&\{V_-,\Im X\}\ar@{-}[ul]\ar[ur]&
&\{V_-,V_+\}\ar@{-}[ul]\ar@{-}[ur]&
&\{\Ker X,V_+\}\ar[ul]\ar@{-}[ur]&
}$\vspace{3.2ex}\par
Figure~3.2
\end{minipage}
\end{figure}

The Hasse diagram of the latter poset appears in Figure~3.2 with the same encoding of the assignment $P\mapsto P'$ as above. There is no minimal parabolic contained in all its members, hence working with standard parabolic subgroups is inadequate. The stabilisers of $\Ker X$ and $\Im X$ are the first examples where the prehomogeneous affine space $\gamma N/N'$ is special under $P_{\gamma N}$, although the tangent prehomogeneous vector space $\mathfrak n/\mathfrak n'$ is not.
The zeta integral $Z_{C,G}^T(\varphi,\lambda)$ in this case cannot be handled yet. It is the first example in which the truncation function $\chi_\nu^T$ in Hypothesis~\ref{convZeta} really depends on~$\nu$.

\subsection{Symplectic group of rank~3}

Here $G(F)=\Sp(V,\omega)$ with $\dim V=6$, and the subregular unipotent class corresponds to the partition~$[4,2]$. The canonical flag of a representative $\gamma=\exp X$ is $\{V_-,V_0,V_+\}$, where
\begin{align*}
V_+&=\Ker X^3=\Ker X^2+\Im X,\\
V_0&=\Ker X^2\cap\Im X=\Ker X+\Im X^2,\\
V_-&=\Ker X\cap\Im X^2=\Im X^3.
\end{align*}
The element $X$ induces isomorphisms
\[
X:V_+/V_0\to V_0/V_-,\qquad X^2:V/V_+\to V_-
\]
and defines symmetric bilinear forms $b_+$ on $V_+/V_0$,\, $b_-$ on $V_0/V_-$ by
\[
b_+(u,v)=\omega(u,Xv)=b_-(Xu,Xv).
\]
The nonisotropic lines $\Im X/V_0$ for~$b_+$ and $\Ker X/V_-$ for~$b_-$ will also play a role, whence we have included $\Ker X$ and $\Im X$ in the Hasse diagram of subspaces shown in Figure~4.1. If $b_+$ or, equivalently, $b_-$ splits over $F$ into a product of two linear forms, then there are isotropic lines $U_+/V_0$, $W_+/V_0$ for~$b_+$ and
$U_-/V_-$, $W_-/V_-$ for~$b_-$. In this case $X$ determines four additional $F$-subspaces, which are shaded in the diagram, with the properties
\[
XU_+=U_+^\perp=U_-,\qquad XW_+=W_+^\perp=W_-.
\]
In any case, $\Hom(V/V_+,V_+/V_0)\times\operatorname{Quad}(V_+/V_0)$ is the associated prehomogeneous vector space. One basic relative invariant is the discriminant of the quadratic form, the other one is given by composition and takes values in $\operatorname{Quad}(V/V_+)$. The torus $A_{L_\nu}/A_G$ is trivial for all~$\nu$.
\begin{figure}[ht]\centering
\noindent\begin{minipage}[b]{0.25\textwidth}
\centering
$\xymatrix@R=0.9ex@C=0ex{
&V\ar@{-}[d]&\\
&V_+\ar@{-}[d]&\\
\color{gray}U_+\ar@{.}[ur]\ar@{.}[dr]&\Im X\ar@{-}[d]
   &\color{gray}W_+\ar@{.}[ul]\ar@{.}[dl]\\
&V_0\ar@{-}[d]&\\
\color{gray}U_-\ar@{.}[ur]\ar@{.}[dr]&\Ker X\ar@{-}[d]
   &\color{gray}W_-\ar@{.}[ul]\ar@{.}[dl]\\
&V_-\ar@{-}[d]&\\
&0&
}$\medskip\par
Figure~4.1
\end{minipage}
\begin{minipage}[b]{0.6\textwidth}
\centering
$\xymatrix@!C=1.4em{
&&&\emptyset\ar@{<-}[dlll]\ar@{-}[dl]\ar@{-}[dr]&&&\\
\{\Ker X,\Im X\}\ar@{-}[dr]
&&\{V_0\}\ar@{<-}[dl]\ar@{-}[dr]
&&\{V_-,V_+\}\ \ar@{-}[dl]\\                   
&\{\Ker X,V_0,\Im X\}&&\{V_-,V_0,V_+\}
}$\vspace{4.5ex}\par
Figure~4.2
\end{minipage}
\end{figure}

The class $C(F)$ splits into two truncation classes $O$ and~$O'$ containing the elements for which the forms $b_\pm$ are anisotropic resp.~split. The Hasse diagram of $\mathcal P_\gamma^{\textrm{min\,infl}}(F)$ for $\gamma$ in $O$ resp.~$O'$ is shown in Figures~4.2 resp.~$4.2'$.
\begin{figure}[ht]\centering
$\xymatrix@!C=2em{
&&\emptyset\ar@{-}[dll]\ar@{-}[d]\ar@{-}[drr]&&\\
\{U_-,U_+\}\ar@{-}[dr]&&\{V_-,V_+\}\ar@{-}[dl]\ar@{-}[dr]
&&\{W_-,W_+\}\ar@{-}[dl]\\
&\{V_-,U_-,U_+,V_+\}&&\{V_-,W_-,W_+,V_+\}&
}$\medskip\par
Figure~$4.2'$
\end{figure}

The class $O'$ is the first example of a truncation class for whose elements $\gamma$ the group $N'=N^{[\gamma]}$ cannot be chosen as the unipotent radical of a parabolic, hence cannot be encoded by arrows in the diagram. If $N$ is the unipotent radical of the stabiliser of~$(U_-,U_+)$, we may set
\[
\mathfrak n'=\{Z\in\mathfrak n\mid ZV\subset U_-,\, ZU_+=0\},
\]
whereas if $N$ is the stabiliser of~$(V_-,U_-,U_+,V_+)$, we may set
\[
\mathfrak n'=\{Z\in\mathfrak n\mid ZU_+\subset V_-,\,ZV_+\subset U_-\}
\]
and similarly with the letter $U$ replaced by~$W$. There are infinitely many~$N'$ for a fixed canonical parabolic, which suggests that one should search for another type of canonical subgroup attached to~$\gamma$.


\begin{thebibliography}{99}

\bibitem{A-rk1} J.~Arthur,
The Selberg trace formula for groups of F-rank one,
\emph{Annals of Mathematics} \textbf{100} (1974), 326--385.

\bibitem{A-trI} J.~Arthur,
A trace formula for reductive groups~I: Terms associated to classes in~$G(\mathbf Q)$,
\emph{Duke Math. J.} \textbf{45} (1978), 911--952

\bibitem{A-inv} J.~Arthur,
The trace formula in invariant form,
\emph{Annals of Mathematics} \textbf{114} (1981), 1--74

\bibitem{A-mix} J.~Arthur,
On a family of distributions obtained from orbits,
\emph{Can. J. Math.} \textbf{38} (1986), 179--214

\bibitem{A-int} J.~Arthur,
An introduction to the trace formula,
in: Harmonic analysis, the trace formula, and Shimura varieties, J. Arthur el al.,
Clay Mathematics Proceedings, vol.~4,
Amer. Math. Soc., Providence 2005

\bibitem{Bou} N.~Bourbaki,
\'El\'ements de math\'ematique, ch.~VIII:
Groupes et alg\`ebres de Lie,
Hermann, Paris 1968

\bibitem{CG} D.~H.~Collingwood, W. M. McGovern,
Nilpotent orbits in semisimple Lie algebras,
Van Nostrand Reinhold, New York 1993

\bibitem{Fl} Y.~Flicker,
The trace formula and base change for $\GL_3$,
Lecture Notes in Math., vol.~927,
Springer-Verlag, Berlin-New York 1982

\bibitem{Ho0} W.~Hoffmann,
The non-semisimple term in the trace formula for rank one lattices,
\emph{J. Reine Angew. Math.} \textbf{379} (1987), 1--21

\bibitem{Ho1} W.~Hoffmann,
Geometric estimates for the trace formula,
\emph{Annals of Global Analysis and Geometry} \textbf{34} (2008), 233--261

\bibitem{Ho} W.~Hoffmann,
Induced conjugacy classes, prehomogeneous varieties,
and canonical parabolic subgroups,
arxiv:1206.3068 [math.GR]

\bibitem{HoWa} W.~Hoffmann, S.~Wakatsuki,
On the geometric side of the Arthur trace formula for the symplectic group of rank~2,
arxiv:1310.0541 [math.NT]

\bibitem{Ki} T. Kimura,
Introduction to prehomogeneous vector spaces,
Amer. Math. Soc., Providence, RI 2003

\bibitem{LS} G.~Lusztig, N.~Spaltenstein,
Induced unipotent classes,
J. London Math. Soc. (2), \textbf{19} (1979), 41--52

\bibitem{Ma} J. Matz,
Arthur's trace formula for $\GL(2)$ and $\GL(3)$ and non-compactly supported test functions,
Dissertation, Universit\"at Bonn.

\bibitem{MoWa} M.~Morishita, T.~Watanabe,
A note on the mean-value theorem for special homogeneous spaces,
\emph{Nagoya Math. J.} \textbf{143} (1996), 111--117

\bibitem{Ono} T.~Ono,
A mean value theorem in adele geometry,
\emph{J. Math. Soc. Japan} \textbf{20} (1968), 276--288

\bibitem{Sai} H. Saito,
Convergence of the zeta functions of prehomogeneous vector spaces,
\emph{Nagoya Math. J.} \textbf{170} (2003), 1--31

\bibitem{Sa} F.~Sato,
Zeta functions in several variables associated with prehomogeneous vector spaces. I: Functional equations,
\emph{T\^ohoku Math. J.} \textbf{34} (1982), 437--483

\bibitem{Sh} T.~Shintani,
On zeta functions associated with the vector space of quadratic forms,
\emph{J.~Fac. Sci. Univ. Tokyo Sect.~I~A Math.} \textbf{22} (1975), 25--65

\bibitem{Sp} T.~A.~Springer,
Linear algebraic groups, second edition,
Birkh\"auser, Boston 1998

\bibitem{We} A.~Weil,
Sur quelques r\'esultats de Siegel,
\emph{Summa Brasil. Math.} \textbf{1} (1946), 21--39

\bibitem{We2} A.~Weil,
Adeles and algebraic groups,
Birkh\"auser, Boston 1982

\bibitem{Yu} A. Yukie,
On the Shintani zeta function for the space of binary quyadratic forms,
\emph{Math. Ann.} \textbf{292} (1992),  355-374

\end{thebibliography}
\end{document}